\definecolor{backgroundcolor}{rgb}{0.9,0.8,0.8}
\numberwithin{equation}{section}
\newcolumntype{H}{>{\setbox0=\hbox\bgroup}c<{\egroup}@{}}
\newcommand{\vanish}[1]{}
\def\bbar#1{\setbox0=\hbox{$#1$}\dimen0=.2\ht0 \kern\dimen0 }
\newcommand{\defi}[1]{\textsf{#1}} 
\newenvironment{romanenum}{\hfill \begin{enumerate}[label=({\roman*})]
}
{\end{enumerate}}
\DeclareSymbolFont{cyrletters}{OT2}{wncyr}{m}{n}
\DeclareMathSymbol{\Sha}{\mathalpha}{cyrletters}{"58}
\newcommand{\G}{{\mathbb G}}
\newcommand{\PP}{{\mathbb P}}
\newcommand{\Q}{{\mathbb Q}}
\newcommand{\RR}{{\mathbb R}}
\newcommand{\ZZ}{{\mathbb Z}}
\newcommand{\Zhat}{{\hat{\ZZ}}}
\def\bbar#1{\setbox0=\hbox{$#1$}\dimen0=.2\ht0 \kern\dimen0 \overline{\kern-\dimen0 #1}}
\newcommand{\Qbar}{{\overline{\mathbb Q}}} 
\newcommand{\Qab}{{\mathbb Q}^{\operatorname{ab}}}
\newcommand{\frakA}{{\mathfrak A}}
\newcommand{\frakB}{{\mathfrak B}}
\newcommand{\frakG}{{\mathfrak G}}
\newcommand{\calA}{{\mathcal A}}
\newcommand{\calF}{{\mathcal F}}
\newcommand{\calH}{{\mathcal H}}
\DeclareMathOperator{\Aut}{Aut}
\DeclareMathOperator{\Gal}{Gal}
\DeclareMathOperator{\divv}{div}
\newcommand{\GalQ}{{\Gal}(\Qbar/\Q)}
\newcommand{\GL}{\operatorname{GL}}
\newcommand{\SL}{\operatorname{SL}}
\newcommand{\PGL}{\operatorname{PGL}}
\newcommand{\cyc}{\operatorname{cyc}}
\newcommand{\injects}{\hookrightarrow}
\newcommand{\isom}{\simeq}
\newcommand{\intersect}{\cap} 
\newcommand{\union}{\cup} 
\newcommand{\tensor}{\otimes}
\def\CC{\mathbb C}
\definecolor{webcolor}{rgb}{0,0,1}
\definecolor{webbrown}{rgb}{.6,0,0}
\newtheorem{theorem}{Theorem}[section]
\newtheorem{lemma}[theorem]{Lemma}
\newtheorem{proposition}[theorem]{Proposition}
\theoremstyle{definition}
\newtheorem{definition}[theorem]{Definition}
\newtheorem*{Mazur's Program B}{Mazur's Program B}
\newtheorem{example}{Example}
\theoremstyle{remark}
\newtheorem*{remark}{Remark}
\crefname{section}{§}{§§}
\crefname{lemma}{Lemma}{Lemmas}
\crefname{equation}{equation}{equations}
\crefname{theorem}{Theorem}{Theorems}
\crefname{proposition}{Proposition}{Propositions}
\date{\today}
\begin{document}

\title[]{A Classification of Genus 0 Modular Curves with rational points}

\author{Rakvi}
\address{Department of Mathematics, University of Pennsylvania, Philadelphia, PA 19104 USA}
\email{rakvi@sas.upenn.edu}

\begin{abstract}

Let $E$ be a non-CM elliptic curve defined over $\Q$. Fix an algebraic closure $\Qbar$ of $\Q$. We get a Galois representation \[\rho_E \colon \GalQ \to \GL_2(\Zhat)\] associated to $E$ by choosing a compatible bases for the $N$-torsion subgroups of $E(\Qbar).$ Associated to an open subgroup $G$ of $\GL_2(\Zhat)$ satisfying $-I \in G$ and $\det(G)=\Zhat^{\times}$, we have the modular curve $(X_G,\pi_G)$ over $\Q$ which loosely parametrises elliptic curves $E$ such that the image of $\rho_E$ is conjugate to a subgroup of $G^t.$ In this article we give a complete classification of all such genus $0$ modular curves that have a rational point. This classification is given in finitely many families. Moreover, each such modular curve can be explicitly computed.

\end{abstract}

\maketitle
\setcounter{tocdepth}{1}

\section{Introduction}

Let $E$ be a non-CM elliptic curve defined over $\Q$. Fix an algebraic closure $\overline{\Q}$ of $\Q$. For a positive integer $N$, let $E[N]$ denote the $N$-torsion subgroup of $E(\overline{\Q}).$ As a $\ZZ/N\ZZ$-module $E[N]$ is free of rank 2. Choosing a basis of $E[N]$, the natural action of $\GalQ$ on $E[N]$ gives a representation,
\[ \rho_{E,N}\colon \GalQ \to \Aut(E[N]) \simeq \GL_{2}(\ZZ/N \ZZ). \] By choosing compatible bases for $E[N]$ with $N \ge 1$, we get a representation
\[ \rho_{E}\colon \GalQ \to \GL_{2}(\hat{\ZZ}) \] such that the composition of $\rho_E$ with the reduction modulo $N$ map $\GL_{2}(\hat{\ZZ}) \rightarrow \GL_{2}(\ZZ/N \ZZ)$ is $\rho_{E,N}.$

The images of $\rho_E$ and $\rho_{E,N}$ are uniquely determined up to conjugacy in $\GL_{2}(\hat{\ZZ})$ and $\GL_{2}(\ZZ/N \ZZ)$, respectively. Serre's open image theorem \cite{MR0387283} says that the group $\rho_{E}(\GalQ)$ is open and hence of finite index in $\GL_{2}(\hat{\ZZ})$. We have $\det(\rho_{E}(\GalQ))=\hat{\ZZ}^{\times}$ since $\det \circ \rho_{E}$ agrees with the cyclotomic character $\chi_{\cyc} \colon \GalQ \to \hat{\ZZ}^{\times}.$ Recall that $\chi_{\cyc}$ satisfies the property $\sigma(\zeta)=\zeta^{\chi_{\cyc}(\sigma)\mod n}$ for any integer $n \ge 1$, $n$-th root of unity $\zeta \in \Qbar$ and $\sigma \in \GalQ.$ \\
The following is a difficult open problem in number theory that serves as a motivation for the classification problems related to modular curves, see section $1$ of \cite{MR0450283}.

\begin{Mazur's Program B}
Given an open subgroup $G$ of $\GL_{2}(\hat{\ZZ})$, classify all the elliptic curves $E$ defined over $\Q$ such that the image of $\rho_E$ is conjugate in $\GL_{2}(\hat{\ZZ})$ to a subgroup of $G$.
\end{Mazur's Program B}

Let $G$ be an open subgroup of $\GL_{2}(\hat{\ZZ})$ satisfying $\det(G)=\hat{\ZZ}^{\times}$ and $-I \in G.$ Associated to $G$, there is a modular curve $X_{G}$ which is a smooth, projective and geometrically connected curve defined over $\Q.$ See \cref{sec:Modular Curves} for background on modular curves. The curve $X_{G}$ loosely parametrises elliptic curves $E$ for which the image of $\rho_{E}$ is conjugate in $\GL_{2}(\hat{\ZZ})$ to a subgroup of $G^{t}$ (here $G^{t}$ is the transpose of $G$). 

\begin{remark}
The choice of $G$ or $G^t$ is not consistent across literature. We consider the natural right action of the matrix group on field of modular functions (see \cref{action on F_N}), therefore we have $G^t$ in the moduli property. Observe that $G \intersect \SL_2(\ZZ)$ and $G^t \intersect \SL_2(\ZZ)$ are conjugate in $\SL_2(\ZZ)$ by the element $(\begin{smallmatrix}
0 & 1 \\ 
-1  & 0
\end{smallmatrix}).$ Therefore the modular curves $X_G$ and $X_{G^t}$ are $\Qab$-twists (see \cref{K-twist} for definition of twists) of each other.  
\end{remark}

An inclusion $G \subseteq G^{'}$ induces a natural morphism $X_{G} \to X_{G^{'}}$. In particular, every $X_{G}$ has an associated morphism $\pi_{G} \colon X_{G} \to X_{\GL_{2}(\Zhat)} \isom\ \PP^{1}_{\Q}$. Throughout this article, we will use the notation $(X_{G},\pi_G)$ to denote a modular curve with its morphism to the $j$-line. 

In this article we provide a complete classification of all genus $0$ modular curves $(X_G,\pi_G)$ where $X_G$ is $\PP^1_{\Q}.$ The code related to this paper can be accessed at \url{https://github.com/Rakvi6893/Classification-of-genus-0-Modular-Curves-that-have-a-Rational-Point}.

Let $X$ be a nice curve over $\Q$ with a nonconstant morphism $\pi_X \colon X \to \mathbb{P}^{1}_{\Q}.$ Let $K$ be a Galois extension of $\Q.$

\begin{definition} \label{K-twist}

A \defi{$K$-twist} of $(X,\pi_X)$ is a pair $(Y,\pi_Y)$ where $Y$ is a curve over $\Q$ and $\pi_Y$ is a morphism $Y \to \mathbb{P}^{1}_{\Q}$ such that there exists an isomorphism $f \colon X_K \to Y_K$ which satisfies $\pi_{Y}\circ f = \pi_{X}.$ \label{isomorphic}We say that $(X,\pi_X)$ and $(Y,\pi_Y)$ are \defi{isomorphic} if $(Y,\pi_Y)$ is a $\Q$-twist of $(X,\pi_X).$ 
\end{definition}

\label{Aut} Let $\Aut_K(X,\pi_X)$ be the subgroup of $\Aut(X_K)$ consisting of $f$ such that $\pi_{X} \circ f= \pi_{X}.$ For $K=\Q$, we will use the notation $\Aut(X,\pi_X)$ instead of $\Aut_{\Q}(X,\pi_X).$ 

In this article we compute the modular curves $(X_G,\pi_G)$ by computing their function fields and give a method to compute the generators for $G.$ If $X_G$ is $\PP^1_{\Q}$, then the isomorphism class of $(X_G,\pi_G)$ determines $G \subseteq \GL_2(\Zhat)$ up to conjugacy. See \cref{curves-groups} in \cref{sec:Modular Curves} for a proof.

There are infinitely many subgroups $G \subseteq \GL_2(\Zhat)$ for which $X_G$ has genus $0.$ Let us see an example of an infinite family of such groups $G.$ The following example also appears as Remark $1.3$ in \cite{MR3671434}.

\begin{example}\label{serre-curves}

Let $d$ be a squarefree integer. Let $G \subseteq \GL_2(\Zhat)$ be the subgroup consisting of matrices whose image modulo $2$ lies in the unique index $2$ subgroup of $\GL_2(\ZZ/2\ZZ).$ Let $\gamma \colon \GalQ \to \{\pm1\} \simeq \GL_2(\Zhat)/G$ be the quadratic character corresponding to the extension $\Q(\sqrt{d})/\Q.$ There is a unique homomorphism $\phi \colon \Zhat^{\times} \to \GL_2(\Zhat)/G$ such that \[\gamma= \phi \circ \chi_{\cyc}.\] Define \[G_d := \{g \in \GL_2(\Zhat)| gG = \phi(\det g) \};\] it is an open subgroup of $\GL_2(\Zhat)$ of index $2$ that has full determinant and contains $-I.$ The group $G_d \intersect \SL_2(\ZZ)$ does not depend on $d$; i.e., $G_d \intersect \SL_2(\ZZ)=G \intersect \SL_2(\ZZ)$ for every $d.$

The modular curve $X_{G_d}$ is $\PP^1_{\Q}$ with the associated morphism $\pi_{G_d} \colon X_{G_d} \to \PP^1_{\Q}$ described by the rational function $\pi_{G_d}(t)=dt^2+1728.$ The curves $\{(X_{G_d},\pi_{G_d})\}_d$ are $\Qab$-twists of each other. Moreover, $(X_{G_d},\pi_{G_d})$ is a $\Q(\sqrt{d})$-twist of $(X_{G_1},\pi_{G_1}).$

\end{example}

\begin{remark}
Any rational number is of the form $dt^2+1728$ for some squarefree $d$ and $t \in \Q.$ Therefore, for every elliptic curve $E$ over $\Q$ with $j(E) \notin \{0,1728\}$, the image of $\rho_E$ is contained in $G_d$ for some squarefree $d.$ \\
Serre observed that $\rho_E \colon \GalQ \to \GL_2(\Zhat)$ is not surjective for all elliptic curves $E/\Q$, see Proposition $22$ of \cite{MR0387283}.  
\end{remark}

The following theorem states that we can classify all the genus $0$ modular curves that have a rational point via finitely many of them.

\begin{theorem}\label{THM:FIRST}

There are nonconstant $\pi_1,\ldots,\pi_r \in \Q(t)$ and finite abelian subgroups $\calA_i \subseteq \Aut(\PP^1_{\Q},\pi_i) \subseteq \PGL_2(\Q)$ such that the following hold:
 \begin{itemize}
    \item For each $i \in \{1,\ldots,r\}$, $(\PP^1_{\Q},\pi_i)$ is isomorphic to some modular curve $(X_G,\pi_G).$
    
    \item For any modular curve $X_G$ isomorphic to $\PP^1_{\Q}$, there is an $i \in \{1,\ldots,r\}$, such that $(X_G,\pi_G)$ is a $\Qab$-twist of $(\PP^1_{\Q},\pi_i)$ via a cocycle $\gamma \colon \Gal(\Qab/\Q) \to \calA_i \subseteq \Aut(\PP^1_{\Q},\pi_i).$
    
\end{itemize}
Moreover, we can explicitly compute $\pi_i$ and $\calA_i.$
\end{theorem}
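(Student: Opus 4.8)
The plan is to reduce the classification to a finite problem by controlling the level of the groups $G$ that can give genus $0$ curves, and then to organize the finitely many resulting curves into twist families. First I would recall the standard dictionary: open subgroups $G \subseteq \GL_2(\Zhat)$ with $-I \in G$ and $\det(G) = \Zhat^\times$ correspond to modular curves $X_G/\Q$, and the genus of $X_G$ depends only on $H := G \cap \SL_2(\Zhat)$, equivalently on the congruence subgroup $\bar H \subseteq \SL_2(\ZZ)$ one gets after taking a suitable level $N$. The key finiteness input is that there are only finitely many congruence subgroups $\bar H \subseteq \SL_2(\ZZ)$ of genus $0$ (this follows from bounds on the level and index of genus $0$ congruence subgroups, e.g.\ the work of Cummins--Pauli); fix a finite list $H_1,\dots,H_m$ of such subgroups up to conjugacy. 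For each fixed $H_j$, the groups $G$ with $G \cap \SL_2(\Zhat) = H_j$ (up to the appropriate conjugation and the normalization $-I \in G$, $\det G = \Zhat^\times$) are governed by homomorphisms from $\Zhat^\times$ into the finite group $N(H_j)/H_j$ (where $N(H_j)$ is a suitable normalizer), so there are again only finitely many such $G$ up to conjugacy; enumerate them all and, after throwing away those with no rational point (equivalently, those parametrizing only finitely many $j$-invariants), we get a finite list of modular curves $(X_{G_1},\pi_{G_1}),\dots,(X_{G_s},\pi_{G_s})$ each with $X_{G_k} \isom \PP^1_\Q$.

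Next I would pass from this finite list to a list of twist-representatives. Two groups $G$, $G'$ with the same $\SL_2$-part $H_j$ but different ``determinant twists'' give modular curves that become isomorphic over $\Qab$: indeed, the twisting cocycle is built from the difference of the two homomorphisms $\Zhat^\times \to N(H_j)/H_j$ composed with $\chi_{\cyc}$, exactly as in \cref{serre-curves}. So for each $j$ I would pick one representative $G$ with $G \cap \SL_2(\Zhat) = H_j$, call its curve $(\PP^1_\Q,\pi_i)$ after choosing a coordinate identifying $X_G$ with $\PP^1_\Q$, and compute $\pi_i \in \Q(t)$ explicitly from a Hauptmodul for $\bar H_j$. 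The finite abelian group $\calA_i$ should be taken to be (a subgroup containing) the image of $\Zhat^\times \to \Aut(\PP^1_\Q,\pi_i) \subseteq \PGL_2(\Q)$ arising across all the $G$ with that fixed $\SL_2$-part; abelianness is automatic since $\Zhat^\times$ is abelian, and the containment $\calA_i \subseteq \Aut(\PP^1_\Q,\pi_i)$ holds because a determinant-twist automorphism commutes with the map to the $j$-line by construction. Then \cref{curves-groups} guarantees that the isomorphism class of $(X_G,\pi_G)$ over $\Q$ pins down $G$ up to conjugacy, so the cocycle $\gamma \colon \Gal(\Qab/\Q) \to \calA_i$ recovering a given $(X_G,\pi_G)$ from $(\PP^1_\Q,\pi_i)$ is exactly the composite of $\chi_{\cyc}$ with the homomorphism $\Zhat^\times \to N(H_j)/H_j \to \calA_i$ measuring $G$ relative to the chosen base representative.

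Finally, ``explicitly compute'' means: for each genus $0$ congruence subgroup $\bar H_j$ on the Cummins--Pauli list, write down a Hauptmodul $t_j$ for the corresponding curve, express the $j$-invariant as a rational function $j = \pi_i(t_j)$ (this is the classical computation of the map to the $j$-line from the $q$-expansion of $t_j$), determine $\Aut(\PP^1_\Q,\pi_i)$ as the group of Möbius transformations in $\PGL_2(\Q)$ fixing $\pi_i$, and identify inside it the finite abelian subgroup $\calA_i$ as above; reading off generators of the actual group $G \subseteq \GL_2(\Zhat)$ is then a matter of lifting $\bar H_j$ together with the determinant-twist data, which is algorithmic.

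The main obstacle I expect is \emph{not} the twist bookkeeping but the effective finiteness and the explicit computation of each $\pi_i$: one needs an a priori bound on the level $N$ of a genus $0$ subgroup $G$ (so that the enumeration of $\SL_2$-parts is genuinely finite and can be carried out), and then for each of the finitely many $\bar H_j$ one must actually produce the Hauptmodul and the rational function $j = \pi_i(t)$, which for the larger-index or higher-level groups is a nontrivial computation; organizing these into the stated finite families, and checking in each case that the relevant $G$ indeed has $\det(G) = \Zhat^\times$, $-I \in G$, and gives a curve with a rational point, is where the real work lies.
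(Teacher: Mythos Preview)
Your proposal contains a genuine error and, relatedly, misses the main structural step of the argument.

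The claim that ``there are only finitely many such $G$ up to conjugacy'' with a given $\SL_2$-part $H_j$ is false. Continuous homomorphisms $\Zhat^\times \to N(H_j)/H_j$ are \emph{infinite} in number whenever the target is nontrivial: already for target $\ZZ/2\ZZ$ there is one for each squarefree integer $d$, exactly as in \cref{serre-curves}. So you cannot ``enumerate them all''; the infinitude of such $G$ is precisely what the theorem is organizing into finitely many families.

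The second, deeper gap is the assumption that a single representative $(\PP^1_\Q,\pi_i)$ per congruence subgroup $\Gamma$ suffices, with $\calA_i \subseteq \Aut(\PP^1_\Q,\pi_i) \subseteq \PGL_2(\Q)$. In general $\Aut_{\Qab}(X_{G_1},\pi_{G_1})$ is only defined over $K_N$, not over $\Q$, so the cocycles $\Gal(\Qab/\Q) \to \Aut_{\Qab}(X_{G_1},\pi_{G_1})$ carrying one modular curve to another are genuine $1$-cocycles for a nontrivial Galois action, not homomorphisms, and their images need not land in any abelian subgroup of $\PGL_2(\Q)$. The paper handles this by splitting $\Qab = K_{N^\infty} \cdot K'$ and decomposing each cocycle into a ``vertical'' piece over $K_{N^\infty}$ (finitely many, by a level-bounding argument, \cref{vertical cocycles}) and a ``horizontal'' homomorphism $\Gal(K'/\Q) \to \Aut(X_{G'},\pi_{G'}) \subseteq \PGL_2(\Q)$ on the resulting twist. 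Each vertical cocycle that produces a $\PP^1_\Q$ yields its own $\pi_i$; this is why the tables list many $\pi_i$ for a single $\Gamma$ (e.g.\ $4G$, $8G$, $8N$, $16G$). When the vertical twist is not a $\PP^1_\Q$, further case analysis (\cref{obstruction 1}, \cref{obstruction 2}) is needed. Your outline also does not address that for some $\Gamma$ (e.g.\ $7A^0$, $11A^0$) no $G$ with $X_G \cong \PP^1_\Q$ exists at all, which the paper verifies by checking that no $g$ solves $\sigma(\pi_\Gamma) = \pi_\Gamma \circ g$.
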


Fix a pair $(\pi_i,\calA_i)$ as in \cref{THM:FIRST}. One can show that the twist of $(\PP^1_{\Q},\pi_i)$ by a cocycle $\gamma \colon \Gal(\Qab/\Q) \to \calA_i$ is isomorphic to some modular curve $(X_G,\pi_G).$ The curve $X_G$ has genus $0$ but it need not have a rational point. To give a classification we need to describe the twists with rational points.

We choose a $u_i \in \Q(t)$ such that $\Q(t)^{\calA_i}=\Q(u_i).$ Moreover, $\Aut(\PP^1_{\Q},u_i)=\calA_i.$ We have $\pi_i=J_i \circ u_i$ for a unique rational function $J_i \in \Q(t).$ 

Take any $v \in \Q$ with $J_i(v) \notin \{0,1728,\infty\}.$ Fix a point $P \in u_i^{-1}(v) \subseteq \PP^1(\Qbar).$ Then, there exists a unique cocycle $\gamma \colon \Gal(\Qab/\Q) \to \calA_i$ such that $\sigma(P)=\gamma_\sigma(P)$ for all $\sigma \in \Gal(\Qab/\Q).$ We will construct a rational function $\pi_{\calA_i,v} \in \Q(t)$ such that $(\PP^1_{\Q},J_i \circ \pi_{\calA_i,v})$ is a twist of $(\PP^1_{\Q},\pi_i)$ via $\gamma.$ We will give an explicit $\pi_{\calA_i,v}$ in \cref{THM:SECOND} for the $\calA_i \subseteq \PGL_2(\Q)$, up to conjugacy, that occur in \cref{THM:FIRST}.

\begin{theorem}\label{THM:SECOND}
Let $G$ be an open subgroup of $\GL_2(\Zhat)$ with full determinant containing $-I.$ If $X_G$ is isomorphic to $\PP^1_{\Q}$, then $(X_G,\pi_G)$ is isomorphic to $(\PP^1_{\Q},J_i \circ \pi_{\calA_i,v})$ for some $i \in \{1,\ldots,r\}$ and $v \in \Q$ with $J_i(v) \notin \{0,1728,\infty\}.$ Conversely, any such pair $(\PP^1_{\Q},J_i \circ \pi_{\calA_i,v})$ is isomorphic to a modular curve $(X_G,\pi_G).$ We list a function $\pi_{\calA,v}$ below for various cases of $\calA$ that occur.\begin{itemize}
    \item For $\calA =\{(\begin{smallmatrix}
1 & 0 \\ 
0  & 1
\end{smallmatrix})\}$, define $\pi_{\calA,v}(t)=t.$ 
    
    \item For $\calA =\{(\begin{smallmatrix}
1 & 0 \\ 
0  & 1
\end{smallmatrix}),(\begin{smallmatrix}
-1 & 0 \\ 
0  & 1
\end{smallmatrix})\} \isom \ZZ/2\ZZ$, define $\pi_{\calA,v}(t)=vt^2.$
    
    \item For $\calA =\{(\begin{smallmatrix}
1 & 0 \\ 
0  & 1
\end{smallmatrix}),(\begin{smallmatrix}
0 & \alpha \\ 
1  & 0
\end{smallmatrix})\} \isom \ZZ/2\ZZ$ with $\alpha$ a non-zero rational number which is not a square, define \[\pi_{\calA,v}(t) =\frac{vt^{2}-4\alpha t+\alpha v}{-t^{2}+vt-\alpha}.\] 
    
    \item For $\calA =\{(\begin{smallmatrix}
1 & 0 \\ 
0  & 1
\end{smallmatrix}),(\begin{smallmatrix}
0 & -1 \\ 
1  & -1
\end{smallmatrix}),(\begin{smallmatrix}
1 & -1 \\ 
1  & 0
\end{smallmatrix})\}\isom \ZZ/3\ZZ$, define $\pi_{\calA,v}(t)$ as \[\frac{(-v+3)t^3+(-3v^2+9v-9)t^2+(-3v^3+9v^2-15v)t+(-v^4+3v^3-6v^2-v+3)}{t^3+2vt^2+(v^2+v-3)t+(v^2-3v+1)}.\] 
    
\item For $\calA =\{(\begin{smallmatrix}
1 & 0 \\ 
0  & 1
\end{smallmatrix}),(\begin{smallmatrix}
0 & -1 \\ 
1  & 0
\end{smallmatrix}),(\begin{smallmatrix}
-1 & -1 \\ 
1  & -1
\end{smallmatrix}),(\begin{smallmatrix}
1 & -1 \\ 
1  & 1
\end{smallmatrix})\}\isom \ZZ/4\ZZ$, define $\pi_{\calA,v}(t)$ as \[\frac{-vt^{4} + (8v + 16)t^{3} + (-18v - 96)t^{2} + (8v + 176)t + (7v - 96)}{t^{4} +(v - 8)t^{3} + (-6v + 18)t^{2} + (11v - 8)t + (-6v - 7)}.\] 
    
     \item For $\calA=\{(\begin{smallmatrix}
1 & 0 \\ 
0  & 1
\end{smallmatrix}),(\begin{smallmatrix}
0 & \alpha \\ 
1  & 0
\end{smallmatrix}),(\begin{smallmatrix}
-1 & 0 \\ 
0  & 1
\end{smallmatrix}),(\begin{smallmatrix}
0 & -\alpha \\ 
1  & 0
\end{smallmatrix})\} \isom \ZZ/2\ZZ \times \ZZ/2\ZZ$ with $\alpha$ a non-zero rational number, define $\pi_{\calA,v}(t)$ as \[\frac{(-3v\alpha^2 + v^3)\alpha^2t^4 + (8\alpha^2 - 4v^2)\alpha^2t^3 + 6v\alpha^2t^2 - 8\alpha^2t +
    v}{\alpha^4t^4 - 2v\alpha^2t^3 + (2\alpha^2 + v^2)t^2 - 2vt + 1}.\] 
    
    \item For $\calA=\{(\begin{smallmatrix}
1 & 0 \\ 
0  & 1
\end{smallmatrix}),(\begin{smallmatrix}
0 & -1 \\ 
1  & 0
\end{smallmatrix}),(\begin{smallmatrix}
1 & 1 \\ 
1  & -1
\end{smallmatrix}),(\begin{smallmatrix}
-1 & 1 \\ 
1  & 1
\end{smallmatrix})\} \isom \ZZ/2\ZZ \times \ZZ/2\ZZ$, define $\pi_{\calA,v}(t)$ as \[\frac{(-25v^3 + 160v^2 - 256v)t^4 + (40v^3 - 208v^2 + 256v)t^3+ (-26v^3 + 96v^2 - 64v)t^2+(8v^3 - 16v^2)t-v^3}{(6v^3 - 37v^2 + 64v - 64)t^4+(-11v^3 + 56v^2 - 32v)t^3+ (6v^3 - 30v^2)t^2+(-v^3 +8v^2)t-v^2}.\] 
    
    \item For $\calA=\{(\begin{smallmatrix}
1 & 0 \\ 
0  & 1
\end{smallmatrix}),(\begin{smallmatrix}
0 & -5 \\ 
1  & 0
\end{smallmatrix}),(\begin{smallmatrix}
-1 & 5 \\ 
1  & 1
\end{smallmatrix}),(\begin{smallmatrix}
5 & 5 \\ 
1  & -5
\end{smallmatrix})\} \isom \ZZ/2\ZZ \times \ZZ/2\ZZ$, define $\pi_{\calA,v}(t)$ as $P_1(t)/Q_1(t)$ where 
\begin{equation*}
\begin{aligned}
    P_1(t)=& (-v^5 + 32v^4 - 336v^3 + 1280v^2 - 1600v)t^4 \\
           & + (-4v^6 + 148v^5 - 1744v^4 + 5600v^3 + 16000v^2 -64000v)t^3 \\
           & + (-6v^7 + 252v^6- 3376v^5 + 10880v^4 + 69600v^3 - 256000v^2 - 640000v)t^2 \\
           & +(-4v^8 + 188v^7 - 2864v^6 + 10680v^5 +82400v^4 - 368000v^3 - 1280000v^2)t \\
           & +(-v^9 + 52v^8 - 896v^7 + 4120v^6 + 29500v^5 - 176000v^4 -640000v^3)
\end{aligned}
\end{equation*} and 

\begin{equation*}
\begin{aligned}
    Q_1(t)=& (v^4 - 36v^3 + 240v^2 + 1600v - 14400)t^4 \\
           & +(3v^5 - 128v^4 + 1320v^3 + 1920v^2 - 51200v)t^3 \\
           & + (3v^6 - 149v^5 + 2056v^4 - 3600v^3 -63200v^2 + 64000v)t^2 \\
           & + (v^7- 58v^6 + 1002v^5 - 4400v^4 - 20800v^3 + 96000v^2)t \\
           & +(-v^7 + 26v^6 - 280v^5 + 3500v^4 - 16000v^3 - 160000v^2).
\end{aligned}
\end{equation*}

    \item For $\calA=\{(\begin{smallmatrix}
1 & 0 \\ 
0  & 1
\end{smallmatrix}),(\begin{smallmatrix}
-2 & 2 \\ 
1  & 2
\end{smallmatrix}),(\begin{smallmatrix}
1 & 2 \\ 
1  & -1
\end{smallmatrix}),(\begin{smallmatrix}
0 & -2 \\ 
1  & 0
\end{smallmatrix})\} \isom \ZZ/2\ZZ \times \ZZ/2\ZZ$, define $\pi_{\calA,v}(t)$ as $P_2(t)/Q_2(t)$ where 
\begin{equation*}
\begin{aligned}
    P_2(t)= & vt^4 + (2v^3 + 4v^2 - 24v)t^3 + (3v^5/2 + 6v^4 - 29v^3 - 68v^2 + 184v)t^2 \\
            & + (v^7/2 + 3v^6 - 11v^5 - 62v^4 + 108v^3 + 320v^2 -
    480v)t \\
            & + (v^9/16 + v^8/2 - 5v^7/4 - 14v^6 + 41v^5/4 + 130v^4 -80v^3 - 400v^2 + 400v)
\end{aligned}
\end{equation*} and 

\begin{equation*}
\begin{aligned}
    Q_2(t)=& t^4 + (3v^2/2 + 2v - 8)t^3 + (3v^4/4 +
    7v^3/4 - 5v^2 - 8v + 24)t^2 \\
           & + (v^6/8 + v^5/4 + v^4/4 + 3v^3 -
    10v^2 + 8v - 32)t \\
           &+ (-v^7/16 + 3v^6/8 + 7v^5/2 - 29v^4/4 - 31v^3 + 64v^2 + 16).
\end{aligned}
\end{equation*}

   \end{itemize}
\end{theorem}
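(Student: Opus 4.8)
The plan is to deduce this statement from Theorem \ref{THM:FIRST} together with the twisting machinery set up in the paragraphs immediately preceding the theorem. First I would recall the content of Theorem \ref{THM:FIRST}: every genus $0$ modular curve $(X_G,\pi_G)$ with $X_G \isom \PP^1_{\Q}$ is a $\Qab$-twist of one of the finitely many models $(\PP^1_{\Q},\pi_i)$ via a cocycle $\gamma\colon \Gal(\Qab/\Q) \to \calA_i$, where $\calA_i$ is a finite abelian subgroup of $\Aut(\PP^1_{\Q},\pi_i)$. Writing $\pi_i = J_i \circ u_i$ with $\Q(t)^{\calA_i} = \Q(u_i)$ and $\Aut(\PP^1_{\Q},u_i)=\calA_i$, the twist of $(\PP^1_{\Q},\pi_i)$ by $\gamma$ is determined by the twist of $(\PP^1_{\Q},u_i)$ by $\gamma$ followed by $J_i$. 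So the whole theorem reduces to two tasks: (1) show that the $\Qab$-cocycles $\gamma$ valued in $\calA_i$ that actually arise from modular curves are exactly the ones of the form $\sigma \mapsto \gamma_\sigma$ with $\sigma(P)=\gamma_\sigma(P)$ for $P$ a point in a fiber $u_i^{-1}(v)$, $v\in\Q$ with $J_i(v)\notin\{0,1728,\infty\}$; and (2) for each of the finitely many groups $\calA \subseteq \PGL_2(\Q)$ appearing (up to conjugacy) in Theorem \ref{THM:FIRST}, produce the explicit rational function $\pi_{\calA,v}$ realizing that twist and verify $(\PP^1_{\Q}, J\circ\pi_{\calA,v})$ is a twist of $(\PP^1_{\Q},\pi_i)$ via $\gamma$.

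For task (1), the key point is a Hilbert-90-type construction. Since $\calA_i$ is abelian and acts on $\PP^1_{\Qab}$, a cocycle $\gamma\colon \Gal(\Qab/\Q)\to\calA_i$ is the same data as a continuous homomorphism (because $\calA_i$ is abelian and the action of Galois on $\calA_i \subseteq \PGL_2(\Q)$ is trivial), hence factors through $\Gal(\Qab/\Q)^{\ab}\cong\Gal(\Qab/\Q)$, and by the Kronecker–Weber theorem such a homomorphism is cut out by a finite abelian extension of $\Q$. I would then show: given $\gamma$, choosing $v\in\Q$ generic and a point $P\in u_i^{-1}(v)(\Qbar)$, the Galois action on $P$ is through $\calA_i$ (since $u_i(P)=v$ is rational and $\calA_i=\Aut(\PP^1,u_i)$ acts simply transitively on a generic fiber of $u_i$), giving a cocycle $\sigma\mapsto\gamma'_\sigma$ with $\sigma(P)=\gamma'_\sigma(P)$; and that as $v$ ranges over $\Q$ with $J_i(v)\notin\{0,1728,\infty\}$ one realizes, up to coboundary, every cocycle that comes from an actual modular curve. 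The last claim is where the modular interpretation enters: the $j$-invariant $j(E)=J_i(v)$ must avoid $0,1728$ (the elliptic curves with extra automorphisms, which the $j$-line parametrization handles separately) and $\infty$ (the cusp), and the condition that there be infinitely many $E/\Q$ with the prescribed mod-$N$ image — equivalently that $X_G(\Q)$ is infinite — is precisely what forces the relevant twist to have a rational point, pinning $\gamma$ down to the family indexed by $v$. I expect the density/surjectivity part of this — showing the construction exhausts all modular twists with rational points and no spurious ones appear — to be the main obstacle, and it is likely handled by invoking the explicit group-theoretic computation underlying Theorem \ref{THM:FIRST} rather than by a soft argument.

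For task (2), I would go through the list of groups $\calA$ one at a time. For each, fix the explicit $u_i$ with $\Q(t)^{\calA}=\Q(u_i)$ (e.g.\ $u=t$, $u=t+1/t$ type invariants, etc.), take a generic fiber point $P$ over $v$, write down the cocycle $\gamma$ it defines, and twist: concretely, $\pi_{\calA,v}$ is obtained as the pullback of $u_i$ along the isomorphism $\PP^1_{\Q}\to \PP^1_{\Q}$ that trivializes the $\calA$-torsor $\{P_1,\dots,P_{|\calA|}\}$, i.e.\ one builds the twisted coordinate from the symmetric functions of the Galois conjugates of $P$. The trivial group gives $\pi_{\calA,v}(t)=t$; the order-$2$ diagonal case gives $vt^2$ (the classical quadratic twist, matching Example \ref{serre-curves} with $d=v$ up to the shift by $1728$); the $\ZZ/2$ case with a non-square $\alpha$, the cyclic $\ZZ/3$ and $\ZZ/4$ cases, and the four Klein-four cases each require solving a small explicit rationality problem — descending a $\PGL_2$-cocycle valued in a finite cyclic or bicyclic group — whose answers are the displayed rational functions. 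Then I would verify in each case, by direct substitution, that $\pi_{\calA,v}$ is $\calA$-invariant composed appropriately, that $J\circ\pi_{\calA,v}$ has the same branch data over $j=0,1728,\infty$ as $\pi_i$, and that the $\Qab$-isomorphism $f\colon\PP^1\to\PP^1$ realizing the twist satisfies $\pi_i\circ$ (nothing) $=J\circ\pi_{\calA,v}\circ f$ with $f$ conjugating the standard $\calA$-action to the $\gamma$-twisted one — this is a finite check for finitely many functions and I would state that it has been carried out (with the computations relegated to a computer-algebra verification), since grinding through the quartic identities by hand is exactly the kind of routine-but-lengthy calculation a sketch should skip.
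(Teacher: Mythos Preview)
Your overall strategy matches the paper's: reduce via Theorem~\ref{THM:FIRST}, factor $\pi_i = J_i \circ u_i$ through the $\calA_i$-quotient, and parametrize the twists with rational points by $v \in \Q$ via the fiber $u_i^{-1}(v)$. The paper's proof in \S\ref{proof 2} is exactly this outline, with the explicit formulas supplied by \S\ref{sec:families of twists}.

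There are two places where your proposal diverges from the paper in a way worth noting. First, your task~(1) is over-engineered. You invoke ``infinitely many $E/\Q$ with prescribed image'' and worry about a density/surjectivity argument, but the paper needs none of this. The point is purely function-field-theoretic: the paper works \emph{generically} over $\Q(v)$, taking $x$ a root of $\pi(T)-v$ so that $K=\Q(x)$ and $\Gal(K/\Q(v))\cong\calA$; the cocycle $\gamma_\sigma$ defined by $\sigma(x)=\gamma_\sigma(x)$ is then shown to be a coboundary in $\PGL_2(K)$ by exhibiting an explicit matrix $A\in\GL_2(K)$ with $\gamma_\sigma=A^{-1}\sigma(A)$, and one sets $\pi_{\calA,v}:=\pi\circ A^{-1}\in\Q(v)(u)$. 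Specializing $v$ to rational values then covers all twists with a rational point automatically (any such twist has a rational point mapping to some $v$ in the base, and the fiber recovers the cocycle); the restriction $J_i(v)\notin\{0,1728,\infty\}$ just avoids ramification. No appeal to elliptic curves or Hilbert irreducibility is needed.

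Second, for task~(2) you describe building $\pi_{\calA,v}$ from ``symmetric functions of the Galois conjugates of $P$,'' which is vague. The paper's mechanism is more concrete and is what actually produces the displayed formulas: for each $\calA$ it writes down the invariant $\pi(T)$, then finds the coboundary matrix $A$ explicitly (these matrices are listed case by case in \S\ref{sec:families of twists}, with entries polynomial in $x$ and $v$), and reads off $\pi_{\calA,v}=\pi\circ A^{-1}$. Your proposed branch-data check is unnecessary, since $\pi_{\calA,v}$ is by construction $\pi$ composed with an isomorphism over $K$, so the covering data over the $j$-line is preserved automatically.
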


\begin{remark}

There are $269$ pairs $(\PP^1_{\Q},\pi)$ and $228$ subgroups $\calA \subsetneq \PGL_2(\Q)$ in \cref{THM:FIRST} corresponding to $109$ genus $0$ congruence subgroups up to conjugacy in $\PGL_2(\ZZ)$. We would like to remark that these numbers are not unique but they are minimal.  
\end{remark}
\subsection{Related Work}

For a complete list of genus $0$ and genus $1$ modular curves $(X_G,\pi_G)$ defined over $\Q$ of prime power level such that $X_G$ has infinitely many rational points, please see \cite{MR3671434}. For a complete classification of all the possible $2$-adic images of Galois representations associated to elliptic curves, please see \cite{MR3500996}. For some cases of composite level modular curves, please refer to the work \cite{MR3957898}.

\subsection{Overview}

In \cref{Overview} we discuss an overview of proofs of \cref{THM:FIRST} and \cref{THM:SECOND}. In \cref{sec:Modular Curves}, we discuss preliminaries and background about modular curves and modular functions. In \cref{sec:Haupt}, we discuss the computation of hauptmoduls for genus 0 congruence subgroups. In \cref{sec:Twists}, we discuss some background material about cocycles and twists and give a method to determine if certain cocycles are coboundaries. In \cref{Horz group decsription}, we give a description of the groups that show up in family of twists. In \cref{sec:families of twists}, we discuss the parametrization of finite Galois extensions $K$ of $\Q$ and give a unique homomorphism $\gamma \colon \Gal(K/\Q) \to \Aut(X_G,\pi_G)$ such that $\gamma$ is a coboundary. In \cref{proof 1}, we give a proof of \cref{THM:FIRST} and in \cref{proof 2} we give a proof of \cref{THM:SECOND}. In \cref{sec:Applications}, we give a procedure to determine generators for an open subgroup $G$ of $\GL_2(\Zhat)$ such that $X_G$ is $\PP^1_{\Q}.$ We also give a method to determine $X_G$ explicitly as a conic from a set of generators for $G.$ Finally, in \cref{sec: Tables}, we give a guide to go through the tables and give our tables in the end.

\subsection{Acknowledgements}

I would like to render my warmest thanks to my advisor David Zywina for suggesting this problem and providing with his valuable comments and guidance throughout my research. All the computations were performed with \texttt{Magma} \cite{MR1484478}. I would also like to thank the anonymous referee for their valuable suggestions.

\subsection{Notations}

We will use $\Qab$ to denote the maximal abelian extension of $\Q.$ We say that a curve is \defi{nice} if it is smooth, projective and geometrically irreducible. We will denote the $j$-invariant of an elliptic curve $E$ by $j(E).$ Let $N$ be a positive integer. We will use $\zeta_N$ to denote $e^{2\pi \dot{\iota}/ N} \in \CC.$ We will denote the $N$-th cyclotomic field $\Q(\zeta_N)$ by $K_N.$ Let $\Q^{cyc}$ be the compositum of cyclotomic fields $K_N$ with $N \ge 1.$ By the Kronecker-Weber theorem $\Qab$ is equal to the field $\Q^{cyc}.$ For a prime $p$, we will use $\Q_p$ to denote the field of $p$-adic numbers. All the topological groups have profinite topology.      

\section{Overview of Proof of \cref{THM:FIRST} and \cref{THM:SECOND}}\label{Overview}

Let us give a brief description of the procedure we use to classify all the modular curves $(X_G,\pi_G)$, up to isomorphism such that $X_G$ is $\PP^1_{\Q}$.\\

For an open group $G \subseteq \GL_{2}(\hat{\ZZ})$ satisfying $-I \in G$ and $\det(G)=\Zhat^{\times}$ define the congruence subgroup $\Gamma := G \intersect \SL_2(\ZZ)$ of $\SL_2(\ZZ).$ As a Riemann surface $X_G(\CC)$ is isomorphic to the compact Riemann surface $X(\Gamma)$ obtained after adding cusps to the quotient $\Gamma \setminus \calH.$ In particular, the genus of $X_{G}$ is equal to the genus of $X(\Gamma)$ which we call the genus of $\Gamma.$ See \cref{sec:Modular Curves} for more details. The connection of these modular curves with congruence subgroups is a key point which we exploit to give a classification.

Fix a genus 0 congruence subgroup $\Gamma \subseteq \SL_2(\ZZ)$ containing $-I$ of level $N$ see \cref{SL2level} for a definition of level of congruence subgroups. Our goal is to compute all the pairs $(X_G,\pi_G)$ such that $X_G$ is $\PP^1_{\Q}$ and $G \intersect \SL_2(\ZZ)=\Gamma.$ 

We compute an explicit function $h$ such that $\CC(X_{\Gamma})=\CC(h)$, see \cref{X_Gamma} for a definition of $X_{\Gamma}.$ We compute the function $\pi_{\Gamma} \in K_N(t)$ which satisfies $\pi_{\Gamma}(h)=j.$ This function describes the morphism $\pi_{\Gamma} \colon X_{\Gamma} \to \PP^{1}_{K_N}.$ 

We then search for a genus 0 modular curve $(X_{G_1},\pi_{G_1})$ satisfying the property that the modular curve $(X_{G_1},\pi_{G_1})$ is isomorphic to $(X_{\Gamma},\pi_{\Gamma})$ over some $K_M$, where $M$ is a multiple of $N$ and is isomorphic to $\PP^1_{\Q}.$ We either succeed in finding such a pair or we observe that there exists no such modular curve $(X_G,\pi_G)$ such that $G \intersect \SL_2(\ZZ)=\Gamma.$

Then, we search for all the genus 0 modular curves $(X_{G},\pi_{G})$ with a rational point which are isomorphic to $(X_{G_1},\pi_{G_1})$ over $\Qab.$ After computing finite number of twists we end up considering homomorphisms $\phi \colon \Gal(\Qab/\Q) \to \calA$, where $\calA$ is a finite subgroup of $\PGL_2(\Q).$ In the \cref{serre-curves}, $\calA$ is $\{(\begin{smallmatrix}
1 & 0 \\ 
0  & 1
\end{smallmatrix}),(\begin{smallmatrix}
-1 & 0 \\ 
0  & 1
\end{smallmatrix})\}.$ Therefore, we end up having an infinite family $\{X_{G_D}\}$ such that two of them become isomorphic over a quadratic extension of $\Q.$

\section{Modular Curves and Modular Functions} \label{sec:Modular Curves}

\subsection{Congruence subgroups}

Let $\Gamma$ be a congruence subgroup of $\SL_{2}(\ZZ).$ \label{SL2level}The level of $\Gamma$ is the smallest positive integer $N$ for which $\Gamma$ contains \[\Gamma(N):=\{A \in \SL_{2}(\ZZ)~|~A \equiv I\pmod{N}\}.\] The group $\SL_2(\ZZ)$ acts on the complex upper half plane $\calH$ by linear fractional transformations. 

We define the extended upper half plane $\calH^{*}$ as $\calH \cup \Q \cup \{\infty\}.$ The action of $\SL_2(\ZZ)$ on $\calH$ naturally extends to the action on $\calH^{*}.$ A \defi{cusp} of $\Gamma$ is a $\Gamma$-orbit of $\Q \cup \{\infty\}.$ After adding cusps to the quotient $\Gamma \setminus \calH$, we get a smooth compact Riemann surface which we denote by $X(\Gamma).$ We will use the notation $X_N$ to denote $X(\Gamma(N))$, where $N$ is a positive integer.

\subsection{Modular functions}

Let $\Gamma$ be a congruence subgroup of $\SL_2(\ZZ).$ A \defi{modular function} $f$ for $\Gamma$ is a meromorphic function of $X(\Gamma)$ i.e., $f$ is a meromorphic function on the upper half plane $\calH$ that satisfies $f(\gamma \tau)=f(\tau)$ for all $\gamma \in \Gamma$ and is meromorphic at the cusps.   

The \defi{width} of the cusp at $\infty$ is the smallest positive integer $w$ such that $(\begin{smallmatrix}
1 & w \\ 
0  & 1
\end{smallmatrix}) \in \Gamma.$ Define $q:=e^{2\pi \dot{\iota} \tau}$ and $q^{1/w}:=e^{2\pi \dot{\iota} \tau/w}$, where $\tau \in \calH.$ If $f$ is a modular function for $\Gamma$, then $f$ has a unique $q$-expansion $f(\tau)=\sum \limits_{n \in \ZZ} c_{n}q^{n/w}$ with $c_{n} \in \CC$ and $c_n=0$ for all but finitely many $n$ less than $0.$

Fix a positive integer $N.$ Let $\calF_N$ denote the field of meromorphic functions of the Riemann surface $X_N$ whose \defi{$q$-expansion} has coefficients in $K_{N}$. For $N=1$, we have $\calF_1=\Q(j)$ where $j$ is the modular $j$-invariant. The first few terms of $q$-expansion of $j$ are $q^{-1}+744+196884q+21493760q^2+\dotsb.$
If $N'$ is a divisor of $N$, then $\calF_{N'} \subseteq \calF_N.$ In particular, we have that $\calF_1 \subseteq \calF_N.$

Take $g \in \GL_2(\ZZ/N\ZZ)$ and let $d=\det(g).$ Then, $g=(\begin{smallmatrix}
1 & 0 \\
0 & d \end{smallmatrix})g'$ for some $g' \in \SL_2(\ZZ/N\ZZ)$. There is a unique right action $*$ of $\GL_2(\ZZ/N\ZZ)$ on $\calF_N$ that satisfies the following properties (see Chapter $2$, section $2$ in \cite{MR648603} for details):

\begin{itemize}
\item If $g \in \SL_2(\ZZ/N\ZZ)$, then \[f*g:=f|_{\gamma},\] where $\gamma \in \SL_{2}(\ZZ)$ is congruent to $g$ modulo $N$ and $f|_{\gamma}(\tau)=f(\gamma \tau)$.

\item If $g=(\begin{smallmatrix}
1 & 0 \\
0 & d \end{smallmatrix})$, then 
\[f*g:=\sigma_{d}(f):=\sum\limits_{n \in \ZZ} \sigma_{d}(c_{n})q^{n/w},\] where $f=\sum\limits_{n \in \ZZ} c_{n}q^{n/w}$ and $\sigma_{d}$ is the automorphism of $K_N$ that satisfies $\sigma_{d}(\zeta_N)=\zeta_N^{d}.$
\end{itemize} 

\begin{proposition}The following properties hold.\label{action on F_N}
\hfill
\begin{romanenum}
    \item $-I$ acts trivially on $\calF_N$ and the action $*$ of $\GL_{2}(\ZZ/N\ZZ)/\{\pm I\}$ on $\mathcal{F}_N$  is faithful.
    
    \item We have $\calF_N^{\GL_2(\ZZ/N\ZZ)/\{\pm I\}}=\Q(j).$
    
    \item \label{opposite}The extension $\mathcal{F}_N$ over $\Q(j)$ is Galois and the action $*$ gives an isomorphism \[\GL_{2}(\ZZ/N \ZZ)/\{\pm I\} \to \Gal(\mathcal{F}_N/\Q(j))^{op},\] where $\Gal(\mathcal{F}_N/\Q(j))^{op}$ is the opposite group of $\Gal(\mathcal{F}_N/\Q(j))$ (i.e., the same underlying set with the group law reversed.)
    
    \item The algebraic closure of $\Q$ in $\calF_{N}$ is $K_{N}$. We have $\mathcal{F}_N^{\SL_{2}(\ZZ/N \ZZ)/\{\pm I\}}=K_N(j).$\label{algclosure}

\end{romanenum}

\end{proposition}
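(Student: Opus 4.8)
The plan is to import from \cite{MR648603} a single genuinely geometric fact and to derive everything else by a degree count and elementary Galois theory. The fact I would use, call it $(\star)$: the extension $\calF_N/K_N(j)$ is Galois, and $g\mapsto(f\mapsto f*g)$ defines an isomorphism $\SL_2(\ZZ/N\ZZ)/\{\pm I\}\isomto\Gal(\calF_N/K_N(j))^{\op}$; equivalently, $X_N$, regarded as a curve over $K_N$, is the Galois cover of the $j$-line with group $\SL_2(\ZZ/N\ZZ)/\{\pm I\}$. (Here $K_N(j)\subseteq\calF_N$ is immediate, since $j\in\calF_1\subseteq\calF_N$ and every element of $K_N$ occurs as a constant function in $\calF_N$.) Two of the four assertions are then essentially immediate. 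First, the algebraic closure of $\Q$ in $\calF_N$ is $K_N$: if $f\in\calF_N$ is algebraic over $\Q$ then $P(f)=0$ for some $P\in\Q[x]\setminus\{0\}$, so $f$ takes finitely many values on the connected surface $X_N$ and hence is a constant, which by the defining property of $\calF_N$ lies in $K_N$; conversely every element of $K_N$ is such a constant. Second, $-I$ acts trivially on $\calF_N$, since by the definition of $*$ this reduces to $f|_{-I}=f$, which holds because $-I$ acts on $\calH$ as the identity transformation; so $*$ descends to an action of $\GL_2(\ZZ/N\ZZ)/\{\pm I\}$.

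For (iii): since $j$ is transcendental over $K_N$, the fact $(\star)$ gives
\[
[\calF_N:\Q(j)]=[\calF_N:K_N(j)]\,[K_N(j):\Q(j)]=\tfrac{1}{2}|\SL_2(\ZZ/N\ZZ)|\cdot[K_N:\Q]=|\GL_2(\ZZ/N\ZZ)/\{\pm I\}|.
\]
Writing $g\in\GL_2(\ZZ/N\ZZ)$ as $(\begin{smallmatrix}1&0\\0&d\end{smallmatrix})g'$ with $g'\in\SL_2(\ZZ/N\ZZ)$, the automorphism $f\mapsto f*g$ fixes $\Q$ (contained in the constants) and fixes $j$ — because $\sigma_d$ fixes $j$, whose $q$-expansion $q^{-1}+744+\cdots$ is rational, and $f\mapsto f*g'$ fixes $j$, which is modular — hence fixes $\Q(j)$. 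As $*$ is a right action, $g\mapsto(f\mapsto f*g)$ is therefore a homomorphism $\Phi\colon\GL_2(\ZZ/N\ZZ)/\{\pm I\}\to\Aut(\calF_N/\Q(j))^{\op}$. It is injective: if $\overline g\in\ker\Phi$, then $f\mapsto f*g$ fixes the constant field, on which it acts as $\sigma_d$, so $\sigma_d=\mathrm{id}$ and $d\equiv1\pmod N$; then $\overline g=\overline{g'}$ lies in the kernel of the $\SL_2(\ZZ/N\ZZ)/\{\pm I\}$-action on $\calF_N$, which is trivial by $(\star)$. Thus $\Phi$ injects a group of order $[\calF_N:\Q(j)]$ into $\Aut(\calF_N/\Q(j))$; since the latter has order at most $[\calF_N:\Q(j)]$, with equality precisely when $\calF_N/\Q(j)$ is Galois, it follows that $\calF_N/\Q(j)$ is Galois and $\Phi$ is an isomorphism onto $\Gal(\calF_N/\Q(j))^{\op}$. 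This is (iii), and it contains the faithfulness assertion in (i).

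It then remains to read off (ii) and the last part of (iv). The fixed field of the whole group $\Gal(\calF_N/\Q(j))$ is $\Q(j)$, which is (ii). Under $\Phi$ the subgroup $\SL_2(\ZZ/N\ZZ)/\{\pm I\}$ has index $[\GL_2(\ZZ/N\ZZ):\SL_2(\ZZ/N\ZZ)]=[K_N:\Q]$, so $\calF_N^{\SL_2(\ZZ/N\ZZ)/\{\pm I\}}$ has degree $[K_N:\Q]$ over $\Q(j)$; it contains $K_N(j)$ because each $f\mapsto f*g'$ with $g'\in\SL_2$ fixes $j$ and the constant field $K_N$, and since $[K_N(j):\Q(j)]=[K_N:\Q]$ the two fields coincide. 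This finishes (iv).

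The crux is the imported fact $(\star)$, and within it the rationality statement that $f*g$ again has its $q$-expansion over $K_N$ — i.e. that $\calF_N$, not merely the full meromorphic function field of $X_N$ over $\CC$, is stable under the substitutions $f\mapsto f|_\gamma$. This is what genuinely uses the construction of $X(N)$ over $K_N$; concretely it can be verified on the Fricke (or Siegel) functions $f_{\mathbf a}$, for $\mathbf a\in(\tfrac1N\ZZ^2)/\ZZ^2$ of exact order $N$, which generate $\calF_N$ over $\Q(j)$, have $q$-expansions in $K_N$, and transform by $f_{\mathbf a}*g'=f_{\mathbf a g'}$ for $g'\in\SL_2(\ZZ/N\ZZ)$ and $\sigma_d(f_{\mathbf a})=f_{\mathbf a(\begin{smallmatrix}1&0\\0&d\end{smallmatrix})}$. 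I would cite $(\star)$ from \cite{MR648603} rather than reprove it, after which every remaining step above is routine bookkeeping.
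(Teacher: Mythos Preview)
Your argument is correct. The only point I would flag is minor: when you pass from ``$\Phi$ is injective'' to ``$\calF_N/\Q(j)$ is Galois and $\Phi$ is an isomorphism,'' you are using that $|\Aut(\calF_N/\Q(j))|\le[\calF_N:\Q(j)]$ with equality iff the extension is Galois; this is standard for finite extensions, but it is worth saying explicitly that $[\calF_N:\Q(j)]$ is finite, which you have in fact just computed.

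The approach, however, differs from the paper's. The paper does not argue at all: it simply cites Theorem~6.6 of Chapter~6 in Shimura \cite{MR1291394}, where the full $\GL_2$ statement is proved directly. Your route is to import only the $\SL_2$-half over $K_N$ (your fact~$(\star)$, which you attribute to \cite{MR648603}) and then recover the $\GL_2$-statement over $\Q$ by the degree count $[\calF_N:\Q(j)]=[\calF_N:K_N(j)]\cdot[K_N(j):\Q(j)]$ together with elementary Galois theory. What this buys you is transparency: your write-up isolates precisely where the genuine analytic/geometric input lies (rationality of the $q$-expansions under $f\mapsto f|_\gamma$, equivalently the model of $X_N$ over $K_N$) and shows that the passage from $\SL_2$ to $\GL_2$ is purely formal. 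The paper's approach is more economical but treats the whole proposition as a black box.
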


\begin{proof}

Refer to Theorem $6.6$, Chapter $6$ in \cite{MR1291394}.
\end{proof}

\begin{remark}

We require the opposite in \cref{action on F_N}, (\ref{opposite}) since $*$ is a right action.
\end{remark}

\subsection{Modular Curves}

Let $G$ be an open subgroup of $\GL_{2}(\Zhat)$ satisfying $-I \in G$ and $\det(G)=\Zhat^{\times}.$\label{level G} We define the \defi{level} of $G$ as the smallest positive integer $M$ such that $G$ is the inverse image of its image under the reduction modulo $M$ map $\pi_{M} \colon \GL_2(\Zhat) \to GL_2(\ZZ/M\ZZ).$ 

Let $M$ be the level of $G$. Let $G_M \subseteq \GL_2(\ZZ/M\ZZ)$ be the image of $G$ under $\pi_M.$ Since $G_M$ has full determinant, it will follow that $\Q$ is algebraically closed in $\calF_M^{G_M}$ by \cref{action on F_N}, (\ref{opposite}). The field $\calF_M^{G_M}$ has transcendence degree $1$ over $\Q$ since it is a finite extension of $\Q(j)$. 

\begin{definition}

The \defi{modular curve} $X_G$ is the nice curve over $\Q$ whose function field is $\calF_M^{G_M}.$ The map $\pi_G \colon X_G \to \PP^1_{\Q}$ is the nonconstant morphism corresponding to the inclusion  $\Q(j) \subseteq \calF_M^{G_M}.$
\end{definition}

The following lemma partially explains how $(X_G,\pi_G)$ parametrises elliptic curves. 

\begin{lemma}
For an elliptic curve $E$ defined over $\Q$ with $j(E) \notin \{0,1728\}$, the group $\rho_E(\GalQ)$ is conjugate in $\GL_2(\Zhat)$ to a subgroup of $G^{t}$ if and only if $j(E) \in \pi_G(X_G(\Q)).$
\end{lemma}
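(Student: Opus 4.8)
The plan is to pass to a single finite level and then translate the group-theoretic condition on $\rho_E$ into the non-emptiness of a geometric fibre of $\pi_G$, using the moduli interpretation of the full-level modular curve.

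First I would reduce modulo the level $M$ of $G$. Since $G$ is the preimage under reduction of its image $G_M\subseteq\GL_2(\ZZ/M\ZZ)$, the same is true of the transpose: $G^t=\pi_M^{-1}(G_M^t)$. Consequently $\rho_E(\GalQ)$ is conjugate in $\GL_2(\Zhat)$ to a subgroup of $G^t$ if and only if $\rho_{E,M}(\GalQ)$ is conjugate in $\GL_2(\ZZ/M\ZZ)$ to a subgroup of $G_M^t$: reducing mod $M$ gives one direction, and a lift to $\GL_2(\Zhat)$ of a matrix conjugating $\rho_{E,M}(\GalQ)$ into $G_M^t$ conjugates $\rho_E(\GalQ)$ into $G^t=\pi_M^{-1}(G_M^t)$, giving the other. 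On the geometric side, $X_G$ has function field $\calF_M^{G_M}$ with $\pi_G$ corresponding to the inclusion $\Q(j)\subseteq\calF_M^{G_M}$, so for $j_0:=j(E)\in\PP^1(\Q)$ we have $j_0\in\pi_G(X_G(\Q))$ if and only if the geometric fibre $\pi_G^{-1}(j_0)(\Qbar)$ contains a $\GalQ$-fixed point.

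The heart of the matter is to describe $\pi_G^{-1}(j_0)(\Qbar)$ as a $\GalQ$-set. Let $\Phi$ be the set of $\Qbar$-points of the full level-$M$ modular curve lying over $j_0$; since $j_0\notin\{0,1728,\infty\}$ this curve is unramified over $j_0$, and moduli-theoretically a point of $\Phi$ is an isomorphism $\phi\colon(\ZZ/M\ZZ)^2\xrightarrow{\sim}E[M]$ taken up to the automorphism $-1$ of $E$ — here $j_0\ne 0,1728$ is used so that $E$ has no extra geometric automorphisms and all its quadratic twists become isomorphic over $\Qbar$. Fixing a basis of $E[M]$ identifies $\Phi$ with $\{\pm I\}\backslash\GL_2(\ZZ/M\ZZ)$, with $\GalQ$ acting by left translation through $\rho_{E,M}$ and $\GL_2(\ZZ/M\ZZ)$ acting by right translation (precomposition on $\phi$). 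Passing from the full-level curve down to $X_G$ quotients by the deck group; the one comparison that requires care is that Shimura's right action $*$ on $\calF_M$ from \cref{action on F_N} corresponds to the moduli action of $\GL_2(\ZZ/M\ZZ)$ only after a transpose — and this is precisely the origin of $G^t$ in the statement. Granting this, one obtains a $\GalQ$-equivariant bijection $\pi_G^{-1}(j_0)(\Qbar)\cong\GL_2(\ZZ/M\ZZ)/G_M^t$ (cosets $hG_M^t$, using $-I\in G_M^t$) with $\GalQ$ acting by $\sigma\colon hG_M^t\mapsto\rho_{E,M}(\sigma)hG_M^t$. A coset is $\GalQ$-fixed if and only if $h^{-1}\rho_{E,M}(\sigma)h\in G_M^t$ for all $\sigma$, i.e. if and only if $\rho_{E,M}(\GalQ)$ is conjugate to a subgroup of $G_M^t$; together with the reduction step this proves the lemma.

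I expect the main obstacle to be exactly this last identification: reconciling the field-theoretic description of $\calF_M$ (the right action $*$ and the ``op'' in \cref{action on F_N}) with the geometric/moduli description of the level-$M$ curve, and in particular pinning down the transpose that converts $*$-invariance under $G_M$ into a conjugacy condition against $G_M^t$. Everything else — the reduction mod $M$, the fact that the level-$M$ fibre over $j_0\notin\{0,1728,\infty\}$ is a torsor under $\GL_2(\ZZ/M\ZZ)/\{\pm I\}$, and the concluding coset computation — is routine bookkeeping. To reduce the burden one may avoid the moduli space altogether and instead fix a single point $P_0$ in the level-$M$ fibre over $j_0$: writing $\sigma(P_0)=P_0\cdot c_\sigma$ yields a homomorphism $\GalQ\to\GL_2(\ZZ/M\ZZ)/\{\pm I\}$, which one checks (against the explicit curve $E$) is conjugate to the mod $M$ reduction of $\rho_E^t$, and then the same coset argument applies.
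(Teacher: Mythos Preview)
The paper does not actually prove this lemma in-text: its entire proof reads ``Refer to section 3.5 of \cite{1508.07660}. The transpose shows up because $*$ is a right action.'' Your proposal is a faithful sketch of the standard argument that underlies that reference: reduce to the finite level $M$, identify the geometric fibre over $j_0$ with the coset space $\GL_2(\ZZ/M\ZZ)/G_M^t$ as a $\GalQ$-set via the moduli interpretation (using $j_0\notin\{0,1728\}$ so $\Aut(E_{\Qbar})=\{\pm1\}$), and read off the conjugacy condition from the existence of a fixed coset. You have also correctly isolated the one genuinely nontrivial step --- reconciling Shimura's right $*$-action on $\calF_M$ with the left moduli action, which introduces the transpose --- and this is precisely the point the paper flags in its one-line comment. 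So your approach is correct and matches what the cited reference does; there is nothing to add beyond carrying out the transpose bookkeeping carefully.
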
 

\begin{proof}

Refer to section 3.5 of \cite{1508.07660}. The transpose shows up because $*$ is a right action.
\end{proof}

\begin{remark}
One could also define $X_{G}$ as the generic fiber of the coarse space for the stack $M_{G}$ which parametrizes elliptic curves with $G$-level structure (refer to \cite{MR0337993} chapter $4$, section 3 for details).

\end{remark}

In the following lemma we prove that classifying modular curves $(X_G,\pi_G)$ up to isomorphism is equivalent to classifying open subgroups $G$ of $\GL_2(\Zhat)$ up to conjugation in $\GL_2(\Zhat).$

\begin{lemma} \label{curves-groups}
Let $G$ and $G'$ be two open subgroups of $\GL_2(\Zhat)$ with full determinant and containing $-I$. Then, $G$ and $G'$ are conjugate in $\GL_2(\Zhat)$ if and only if $(X_G,\pi_G)$ and $(X_{G'},\pi_{G'})$ are isomorphic. 
\end{lemma}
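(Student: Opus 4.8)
The plan is to establish both directions via the correspondence between open subgroups of $\GL_2(\Zhat)$ and subfields of $\bigcup_N \calF_N$ provided by \cref{action on F_N}. First I would reduce to a common level: given $G$ and $G'$, let $N$ be a common multiple of their levels (and large enough that $\Gamma(N) \subseteq G \cap G'$), and write $G_N, G'_N \subseteq \GL_2(\ZZ/N\ZZ)$ for their images. By definition the function fields of $X_G$ and $X_{G'}$ are $\calF_N^{G_N}$ and $\calF_N^{G'_N}$ respectively (using that $\calF_M \subseteq \calF_N$ for $M \mid N$), both regarded as subextensions of $\calF_N/\Q(j)$. Since $\calF_N/\Q(j)$ is Galois with group $\GL_2(\ZZ/N\ZZ)/\{\pm I\}$ acting on the right via $*$ (by \cref{action on F_N}\ref{opposite}), the Galois correspondence identifies intermediate fields containing $\Q(j)$ with subgroups of $\GL_2(\ZZ/N\ZZ)/\{\pm I\}$; under this correspondence $\calF_N^{G_N}$ and $\calF_N^{G'_N}$ correspond to $G_N/\{\pm I\}$ and $G'_N/\{\pm I\}$.

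For the forward direction, suppose $G' = g G g^{-1}$ for some $g \in \GL_2(\Zhat)$; reducing mod $N$ we get $G'_N = \bar g\, G_N\, \bar g^{-1}$ with $\bar g \in \GL_2(\ZZ/N\ZZ)$. The element $\bar g$ acts on $\calF_N$ by the field automorphism $f \mapsto f * \bar g$, which fixes $\Q(j)$ (since $\Q(j) = \calF_N^{\GL_2(\ZZ/N\ZZ)/\{\pm I\}}$), and carries $\calF_N^{G'_N}$ isomorphically onto $\calF_N^{G_N}$: indeed $f \in \calF_N^{G'_N}$ iff $f * h = f$ for all $h \in G'_N$, iff $(f * \bar g) * (\bar g^{-1} h \bar g) = f * \bar g$ for all such $h$, iff $f * \bar g \in \calF_N^{\bar g^{-1} G'_N \bar g} = \calF_N^{G_N}$. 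Thus we obtain a $\Q(j)$-algebra isomorphism $\calF_N^{G_N} \isomto \calF_N^{G'_N}$, and the equivalence between curves-with-maps-to-$\PP^1_\Q$ and their function fields over $\Q(j)$ (anti-equivalence of nice curves with function fields) yields an isomorphism $f \colon X_{G'} \to X_G$ with $\pi_G \circ f = \pi_{G'}$, which rearranges to the required $(X_G,\pi_G) \isom (X_{G'},\pi_{G'})$ in the sense of \cref{isomorphic}.

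For the converse, suppose $(X_G,\pi_G) \isom (X_{G'},\pi_{G'})$, i.e.\ there is an isomorphism of curves over $\Q$ compatible with the $j$-maps; dualizing gives a $\Q(j)$-algebra isomorphism $\calF_N^{G'_N} \isomto \calF_N^{G_N}$ of subfields of $\calF_N$. I would then extend this to an automorphism of the Galois closure: since both fields sit inside $\calF_N$ and $\calF_N/\Q(j)$ is Galois, any $\Q(j)$-embedding $\calF_N^{G'_N} \hookrightarrow \calF_N$ extends to an element $\sigma \in \Gal(\calF_N/\Q(j))$, and two subextensions that are $\Q(j)$-isomorphic are conjugate under the Galois group. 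Hence $G_N/\{\pm I\}$ and $G'_N/\{\pm I\}$ are conjugate in $\GL_2(\ZZ/N\ZZ)/\{\pm I\}$ (being the subgroups matched to conjugate subfields under the correspondence of \cref{action on F_N}\ref{opposite}, keeping in mind the $\mathrm{op}$ only reverses multiplication and so does not affect conjugacy), say by $\bar g$. Since $-I \in G, G'$ this lifts to $G_N$ and $G'_N$ being conjugate in $\GL_2(\ZZ/N\ZZ)$; choosing any lift $g \in \GL_2(\Zhat)$ of $\bar g$ and using that $G, G'$ are full preimages of $G_N, G'_N$ under reduction mod $N$ (as $N$ is a multiple of both levels), we conclude $G' = g G g^{-1}$.

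The main obstacle I anticipate is bookkeeping around the $\{\pm I\}$ quotient and the opposite-group twist in \cref{action on F_N}\ref{opposite}: one must check that conjugacy of subgroups in $\GL_2(\ZZ/N\ZZ)/\{\pm I\}$ lifts to conjugacy in $\GL_2(\ZZ/N\ZZ)$ (immediate since $-I$ is central and lies in both groups) and then to $\GL_2(\Zhat)$ (immediate since both groups are full preimages at level $N$), and that passing to the opposite group does not disturb the conjugacy statement (conjugation in $H^{\mathrm{op}}$ by $g$ is conjugation in $H$ by $g^{-1}$). The only genuinely substantive input is the anti-equivalence between nice curves over $\Q$ equipped with a nonconstant morphism to $\PP^1_\Q$ and finitely generated field extensions of $\Q(j)$ with $\Q$ algebraically closed in them, which is standard; everything else is the Galois correspondence applied to \cref{action on F_N}.
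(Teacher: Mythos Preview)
Your proof is correct and follows essentially the same approach as the paper: both directions go through the Galois correspondence for $\calF_N/\Q(j)$ from \cref{action on F_N}, translating conjugacy of subgroups into $\Q(j)$-isomorphism of fixed subfields. The only cosmetic difference is that for the converse the paper argues by explicitly showing $gG'g^{-1}\subseteq G$ and then invoking equality of degrees $\deg\pi_G=\deg\pi_{G'}$, whereas you appeal directly to the standard fact that $\Q(j)$-isomorphic intermediate fields of a Galois extension correspond to conjugate subgroups; these are the same argument packaged slightly differently.
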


\begin{proof}

Let us assume that $G$ and $G'$ are conjugate in $\GL_2(\Zhat)$,i.e., $G'=gGg^{-1}$ for some $g \in \GL_2(\Zhat).$ It is easy to check that $G$ and $G'$ have the same level. Let $M$ be that level. Then, the map $\calF_M^{G_M} \to \calF_M^{G'_M}$ that sends $f$ to $f*g^{-1}$ is an isomorphism that fixes $\Q(j).$ Therefore, we get a corresponding isomorphism between $(X_G,\pi_G)$ and $(X_{G'},\pi_{G'})$.

Let us assume that $(X_G,\pi_G)$ and $(X_{G'},\pi_{G'})$ are isomorphic, i.e., there exists an isomorphism $\bar{\phi} \colon \Q(X_G) \to \Q(X_{G'})$ that fixes $\Q(j).$ Let $M$ and $M'$ be respectively the levels of $G$ and $G'.$ We will use $G$ and $G'$ to denote their image modulo $M$ and $M'$ respectively. We have the inclusions $\Q(j) \subseteq \Q(X_G) \subseteq \calF_{MM'}$ and $\Q(j) \subseteq \Q(X_{G'}) \subseteq \calF_{MM'}.$ There exists an element $\phi \in \Gal(\calF_{MM'}/\Q(j))$ such that $\phi|_{\Q(X_{G'})}=\bar{\phi}.$ Let $g \in \GL_2(\ZZ/MM'\ZZ)$ be the corresponding element from \cref{action on F_N}, (\ref{opposite}), i.e., the map $\phi \colon \Q(X_G) \to \Q(X_{G'})$ sends $f$ to $f*g.$ We will now show that $gG'g^{-1} \subseteq G.$ Let $a \in gG'g^{-1}$, i.e., $a=ga'g^{-1}$ for some $a' \in G'.$ For any $f \in \Q(X_G)$, $(f*g)*a'=f*g$. Therefore, $f*(ga'g^{-1})=f.$ Hence, $a=ga'g^{-1} \in G.$ Since, the degree of $\pi_G$ is equal to the degree of $\pi_{G'}$ we can conclude that $gG'g^{-1} = G.$ 
\end{proof}

Let $\Gamma$ be a congruence subgroup of level $N$ that contains $-I.$ Let $\Gamma_N \subseteq \SL_2(\ZZ/N\ZZ)$ be the image of $\Gamma$ under reduction modulo $N.$

\begin{definition}\label{X_Gamma}

The algebraic curve $X_{\Gamma}$ is the nice curve over $K_N$ whose function field is $\calF_N^{\Gamma_N}.$ The map $\pi_\Gamma \colon X_{\Gamma} \to \PP^1_{K_N}$ is the morphism corresponding to the inclusion $K_N(j) \subseteq \calF_N^{\Gamma_N}.$
\end{definition}

Using the inclusion $K_N \subsetneq \CC$, we can identify $X_{\Gamma}(\CC)$ with the Riemann surface $X(\Gamma).$

Using \cref{action on F_N}, (\ref{algclosure}) we know that the field $K_M(X_G)$ which is the function field of the base extension of $X_G$ to $K_M$ is isomorphic to the function field $K_M(X_{\Gamma})$, where $\Gamma$ is conjugate to the congruence subgroup $G \intersect \SL_2(\ZZ).$ Equivalently, there exists an isomorphism $f \colon (X_{\Gamma})_{K_M} \to (X_G)_{K_M}$ that satisfies $\pi_G \circ f = \pi_{\Gamma}.$

\section{Computing hauptmoduls}\label{sec:Haupt}

Let $\Gamma$ be any genus $0$ congruence subgroup of $\SL_{2}(\ZZ)$ of level $N$. Since $\Gamma$ has genus 0, the function field $\CC(X(\Gamma))$ over $\CC$ is generated by a single modular function $h$. We can choose $h$ so that it has a simple pole at any fixed point of $X(\Gamma).$  

A \defi{hauptmodul} of $\Gamma$ is a meromorphic function $h$ on $X(\Gamma)$ with a unique pole at the cusp $\infty$ and this pole is simple. We have $\CC(X(\Gamma))=\CC(h).$ 

Fix a hauptmodul $h$ of $\Gamma.$ Every other hauptmodul of $\Gamma$ is of the form $ah+b$ for a unique $a \in \CC^{\times}$ and $b \in \CC.$ We say that a hauptmodul is \defi{normalized} if the leading coefficient of its $q$-expansion is $1$ and the constant coefficient is $0.$ A normalized hauptmodul of $\Gamma$ is unique and all the coefficients of its $q$-expansion lie in $K_N.$ This can be proven directly; it will also follow from our computations. To get a normalized hauptmodul one can rescale a given hauptmodul such that the constant coefficient is $0$ and the leading coefficient is $1.$ 

In this section, we will discuss computation of hauptmoduls. Our goal is to describe how to express the normalized hauptmodul of $\Gamma$ in terms of explicit Siegel functions. This explicit description allows us to compute arbitrarily many terms of the $q$-expansion of the hauptmodul at each cusp of $\Gamma.$
\\
Suppose that $h$ is the normalized hauptmodul of $\Gamma.$ Let $\gamma \Gamma \gamma^{-1}$ be a conjugate of $\Gamma$, where $\gamma \in \SL_2(\ZZ)$. Then, $h':=h|_{\gamma^{-1}}$ is a meromorphic function on $X(\gamma \Gamma \gamma^{-1}).$ If the pole of $h'$ is at $\infty$, then $h'$ is a hauptmodul. Otherwise, $1/(h'-a_0)$ is a meromorphic function on $X(\gamma \Gamma \gamma^{-1})$ with the pole at $\infty$, where $a_0$ is the constant coefficient of $h'.$ Therefore, it suffices to compute the hauptmodul for representatives of the conjugacy classes of genus $0$ congruence subgroups in $\SL_2(\ZZ).$  

There are only finitely many congruence subgroups of genus $0$. Moreover, the genus $0$ congruence subgroups of $\SL_2(\ZZ)$ containing $-I$ are listed, up to conjugation in $\GL_2(\ZZ)$, by Cummins and Pauli in \cite{MR2016709}. To obtain a list up to conjugation of genus $0$ congruence subgroups containing $-I$ in $\SL_2(\ZZ)$ we can do the following. Let $A=(\begin{smallmatrix}
-1 & 0 \\ 
0  & 1
\end{smallmatrix})$ for the rest of this section. Fix a genus 0 congruence subgroup $\Gamma$ in \cite{MR2016709}. Let $\Gamma^{'}:=A\Gamma A^{-1}.$ It can be checked that $\Gamma$ and $\Gamma^{'}$ represent at most two conjugacy classes in $\SL_2(\ZZ).$

We will use the labels from \cite{MR2016709} to denote their explicit genus 0 congruence subgroups whenever we compute the hauptmodul with respect to the set of generators listed in \cite{MR2016709}.

\subsection{Siegel functions}
We will briefly discuss the definition and key properties of Siegel functions.

Take $a =(a_{1},a_{2}) \in \Q^{2} \setminus \ZZ^{2}.$ Define $q_{z}:=e^{2\pi \dot{\iota} z}$ and recall that $q=e^{2\pi \dot{\iota} \tau}$, where $\tau \in \calH$ and $z=a_{1}\tau+a_{2}$.  The \defi{Siegel function} $g_{a}$ is the holomorphic function $\calH \to \CC^{\times}$ given by \[g_{a}(\tau)=-q^{(1/2)B_{2}(a_{1})}e^{2\pi \dot{\iota} a_{2}(a_{1}-1)/2}(1 - q_{z}){\displaystyle \prod_{n=1}^{\infty}(1 - q^{n}q_{z})(1 - q^{n}/q_{z})}\] where  $B_{2}(X) = X^2 - X + 1/6$ is the second Bernoulli polynomial.

\begin{lemma}\label{Siegel functions}
 Take $a =(a_{1},a_{2}) \in \Q^{2} \setminus \ZZ^{2}$ and $b =(b_{1},b_{2}) \in \ZZ^{2}.$ The following properties are satisfied by Siegel functions:
\begin{romanenum}
    \item $g_{-a}=-g_a.$
    \item $g_{a+b}(\tau)=\epsilon(a,b)g_{a}(\tau)$, where $\epsilon(a,b)=(-1)^{b_{1}+b_{2}+b_{1}b_{2}}e^{2\pi \dot{\iota} (b_{2}a_{1}-a_{2}b_{1})/2}$ 
    \item For any $\gamma \in \SL_2(\ZZ)$, we have $g_{a}(\gamma \tau)=\varepsilon(\gamma)g_{a \gamma}(\tau),$ where $\varepsilon \colon \SL_2(\ZZ) \to \CC^{\times}$ is the group homomorphism satisfying $\varepsilon((\begin{smallmatrix}
1 & 1 \\
0 & 1 \end{smallmatrix}))=\zeta_{12}$ and $\varepsilon((\begin{smallmatrix}
0 & 1 \\
-1 & 0 \end{smallmatrix}))=\zeta_4.$ 
\end{romanenum}

\end{lemma}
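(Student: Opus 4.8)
The plan is to prove \cref{Siegel functions} by reducing each statement to a direct manipulation of the product expansion defining $g_a$, using the fact that the exponential prefactors are designed exactly to make the transformation laws clean. First I would record that $g_a$ depends on $a$ only through the quantity $z = a_1\tau + a_2$ in the product part, so the $\tau$-dependence enters both through $q = e^{2\pi\dot\iota\tau}$ and through $q_z = e^{2\pi\dot\iota z}$; it will be convenient throughout to write $g_a(\tau) = -q^{B_2(a_1)/2}\, e^{\pi\dot\iota a_2(a_1-1)}\,(1-q_z)\prod_{n\ge 1}(1-q^nq_z)(1-q^n/q_z)$ and to name the infinite product $P(a_1,z) = (1-q_z)\prod_{n\ge1}(1-q^nq_z)(1-q^n/q_z)$, which is invariant under $z \mapsto -z$ up to the factor coming from $(1-q_z)$ versus $(1-q_{-z}) = (1-q^{-1}_z)$.

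For (i), I would substitute $-a = (-a_1,-a_2)$: the Bernoulli prefactor is unchanged since $B_2(-a_1) = B_2(a_1)$ (as $B_2$ is symmetric about $1/2$, and indeed $B_2(X) = B_2(1-X)$, but here one only needs $(-a_1)^2-(-a_1) \neq a_1^2 - a_1$ in general, so I must be careful — actually $B_2(-a_1) = a_1^2 + a_1 + 1/6 \neq B_2(a_1)$, so the $q$-power does change, and the cancellation must come from combining it with the change in $P$). The correct bookkeeping: replacing $a$ by $-a$ sends $z$ to $-z$, hence $q_z \to q_z^{-1}$; then $(1-q^{-1}_z)\prod_{n\ge1}(1-q^nq^{-1}_z)(1-q^n q_z) = -q^{-1}_z(1-q_z)\prod\cdots$ after pulling out $-q^{-1}_z = -q^{-a_1}q^{-a_2}\cdot(\text{unit})$ and reindexing, and the leftover powers of $q$ and roots of unity reassemble with the changed Bernoulli and $e^{2\pi\dot\iota a_2(a_1-1)/2}$ factors to give exactly $-1$. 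For (ii), I would substitute $a + b$ with $b \in \ZZ^2$: now $z$ changes by $b_1\tau + b_2$, so $q_z \mapsto q^{b_1}q_z$ (since $q_{b_2} = 1$), and the product $P$ is quasi-invariant — reindexing $n \mapsto n \pm b_1$ in the two factors produces a finite product of terms that collapses to a power of $q$ and $q_z$ times a sign; combined with the change $B_2(a_1+b_1) - B_2(a_1) = 2a_1b_1 + b_1^2 - b_1$ in the Bernoulli exponent and the change in $e^{2\pi\dot\iota a_2(a_1-1)/2}$, everything reduces to the stated factor $\epsilon(a,b) = (-1)^{b_1+b_2+b_1b_2}e^{2\pi\dot\iota(b_2a_1 - a_2b_1)/2}$. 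This is a finite, if fiddly, computation.

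For (iii), the cleanest route is to invoke the standard theory: $g_a$ is (up to a constant) the Klein form $\mathfrak{k}_a$ divided by the Dedekind $\eta$-function squared, i.e. $g_a(\tau) = \mathfrak{k}_a(\tau)\,\eta(\tau)^{-2}$ in the normalization where $\mathfrak{k}_a$ transforms under $\gamma \in \SL_2(\ZZ)$ by $\mathfrak{k}_a(\gamma\tau) = (c\tau+d)^{-1}\mathfrak{k}_{a\gamma}(\tau)$ and $\eta(\gamma\tau)^2 = \epsilon_\eta(\gamma)(c\tau+d)\,\eta(\tau)^2$ with $\epsilon_\eta(\gamma)$ a $12$th root of unity; the weight $-1$ from the Klein form cancels the weight $+1$ from $\eta^2$, leaving $g_a(\gamma\tau) = \epsilon_\eta(\gamma)^{-1}g_{a\gamma}(\tau)$, so $\varepsilon(\gamma) := \epsilon_\eta(\gamma)^{-1}$ is the homomorphism in question, and its values on the generators $(\begin{smallmatrix}1&1\\0&1\end{smallmatrix})$ and $(\begin{smallmatrix}0&1\\-1&0\end{smallmatrix})$ are read off from the classical $\eta$-multiplier: $\eta(\tau+1) = \zeta_{24}\eta(\tau)$ gives $\zeta_{12}$ on $\eta^2$ hence $\varepsilon = \zeta_{12}$ (matching the sign convention in the statement), and $\eta(-1/\tau) = \sqrt{-\dot\iota\tau}\,\eta(\tau)$ gives $\varepsilon = \zeta_4$. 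Alternatively, if I want a self-contained argument, I would verify (iii) directly on the two generators $T = (\begin{smallmatrix}1&1\\0&1\end{smallmatrix})$ and $S = (\begin{smallmatrix}0&1\\-1&0\end{smallmatrix})$: the case of $T$ follows immediately from the $q$-expansion since $T$ acts by $\tau \mapsto \tau+1$, $a \mapsto aT = (a_1, a_1 + a_2)$, and tracking the prefactor gives the $\zeta_{12}$; the case of $S$ is the genuinely hard part — it is equivalent to the modular transformation of the Siegel function under $\tau \mapsto -1/\tau$, which one proves via the functional equation of the Weierstrass $\sigma$-function or via Poisson summation applied to the $\log$ of the product, and I would cite Kubert--Lang, \emph{Modular Units}, Chapter 2, for this rather than reproduce it. The main obstacle is precisely this $S$-transformation: it does not follow from formal product manipulation and requires either the Klein form / $\eta$-quotient identification or an appeal to the analytic functional equation, so I would structure the proof to isolate that one input and treat (i), (ii), and the $T$-part of (iii) as elementary consequences of the product formula.
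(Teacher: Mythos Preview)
Your approach to (ii) and (iii) matches the paper's: both reduce (ii) to the product-expansion identity K2 of Kubert--Lang, and both handle (iii) via the Klein-form/$\eta^2$ factorization (the paper cites K0 and K1 of Kubert--Lang together with the $\eta^2$ transformation law, which is exactly your ``cleanest route'').

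The one genuine difference is in (i). You attack $g_{-a} = -g_a$ head-on from the product expansion, and you already saw the snag: $B_2(-a_1) \neq B_2(a_1)$, so the prefactor does change and the cancellation has to be extracted from a reindexing of the infinite product --- doable, but fiddly, and your sketch leaves that bookkeeping unchecked. The paper sidesteps this entirely by deriving (i) \emph{from} (iii): take $\gamma = -I$. Since $-I = S^2$ with $S = (\begin{smallmatrix}0&1\\-1&0\end{smallmatrix})$, one has $\varepsilon(-I) = \zeta_4^2 = -1$, and $(-I)\tau = \tau$, $a(-I) = -a$, so (iii) gives $g_a(\tau) = -g_{-a}(\tau)$ immediately. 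This is a one-line argument once (iii) is in hand, and it avoids the product manipulation altogether. Your ordering (i) $\to$ (ii) $\to$ (iii) is logically independent but strictly harder; the paper's ordering (ii), (iii) $\Rightarrow$ (i) is the economical one.
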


\begin{proof}

Property $(ii)$ follows from K2 in Section 1, Chapter 2 of \cite{MR648603}. Property $(iii)$ follows using K0 and K1 of Section 1, Chapter 2 of \cite{MR648603} and the following transformation property of Dedekind Eta function. Recall that the Dedekind Eta function is a holomorphic function on $\calH$ given by $\eta(\tau)=q^{1/24}\prod_{n=1}^{\infty}(1-q^n)$ and for $\gamma=(\begin{smallmatrix}
a & b \\
c & d \end{smallmatrix}) \in \SL_2(\ZZ)$, $\eta(\gamma \tau)^2=\varepsilon(\gamma)(c\tau+d)\eta(\tau)^2.$ Property $(i)$ follows from property $(iii)$ by taking $\gamma=-I.$

\end{proof}
Let $A_{N}$ be the set of elements in $(1/N)\ZZ^{2} \setminus \ZZ^{2}$ satisfying one of the following conditions.

\begin{itemize}
    \item  $0 < a_{1} < 1/2$ and $0 \le a_{2} < 1$
    \item $a_{1}=0$ and $0 < a_{2} \le 1/2$
    \item $a_{1}=1/2$ and $0 \le a_{2} \le 1/2$
\end{itemize}

For each $a \in N^{-1}\ZZ^2 \setminus \ZZ^2$, there is a unique element $a'\in A_N$ such that $a=\pm a'+ m$ for some $m$ in $\ZZ^2.$ Using properties $(i)$ and $(ii)$ of \cref{Siegel functions} we get that $g_a= \zeta g_{a'},$ where $\zeta$ is an explicit $2N$-th root of unity.

The map $(N^{-1}\ZZ^2 \setminus \ZZ^2) \times \SL_2(\ZZ) \to N^{-1}\ZZ^2 \setminus \ZZ^2$ that sends $(a,\gamma) \mapsto a\gamma$ is a right action of $\SL_2(\ZZ)$ on $N^{-1}\ZZ^2 \setminus \ZZ^2$. This induces a right action of $\SL_2(\ZZ)$ on $A_N$ given by $a*\gamma:= (a\gamma)'.$ 

The following is Lemma $4.2$ from section $4$ of \cite{MR3671434} which gives us an explicit modular function for $\Gamma$ which we use later to construct hauptmoduls.

\begin{lemma}

Let $O$ be a $\Gamma$-orbit of $A_{N}$ and let $g_{O}$ be the holomorphic function $\calH \to \CC^{\times}$ which is defined as $\prod_{a \in O} g_{a}$. Then, $g_{O}^{12N}$ is a modular function for $\Gamma$. Every pole and zero of $g_{O}^{12N}$ on $X(\Gamma)$ is a cusp. 
\end{lemma}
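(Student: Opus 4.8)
The plan is to read off both assertions directly from the transformation formulas collected in \cref{Siegel functions}, together with the fact that each Siegel function $g_a$ is a nowhere-vanishing holomorphic function on $\calH$. Since $g_O=\prod_{a\in O}g_a$ is a finite product of such functions, $g_O$ and hence $g_O^{12N}$ are holomorphic and never zero on $\calH$. This already disposes of the final sentence of the lemma: once we know $g_O^{12N}$ is a modular function for $\Gamma$, its only possible zeros and poles on $X(\Gamma)$ lie among the finitely many cusp points adjoined to $\Gamma\setminus\calH$.

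Next I would prove $\Gamma$-invariance of $g_O^{12N}$. Fix $\gamma\in\Gamma$. By part (iii) of \cref{Siegel functions}, $g_a(\gamma\tau)=\varepsilon(\gamma)g_{a\gamma}(\tau)$ for each $a\in O$. For $a\in A_N$ the point $a\gamma$ lies in $N^{-1}\ZZ^2\setminus\ZZ^2$, so by the discussion following \cref{Siegel functions} (applying parts (i) and (ii)) there is a $2N$-th root of unity $\zeta_{a,\gamma}$ with $g_{a\gamma}=\zeta_{a,\gamma}\,g_{(a\gamma)'}=\zeta_{a,\gamma}\,g_{a*\gamma}$. Multiplying over $a\in O$ gives
\[
g_O(\gamma\tau)=\varepsilon(\gamma)^{|O|}\Bigl(\prod_{a\in O}\zeta_{a,\gamma}\Bigr)\prod_{a\in O}g_{a*\gamma}(\tau).
\]
Because $O$ is a single $\Gamma$-orbit for the $*$-action and $\gamma\in\Gamma$, the map $a\mapsto a*\gamma$ permutes $O$, so $\prod_{a\in O}g_{a*\gamma}(\tau)=\prod_{a\in O}g_a(\tau)=g_O(\tau)$. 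Hence $g_O(\gamma\tau)=\mu(\gamma)g_O(\tau)$, where $\mu(\gamma)=\varepsilon(\gamma)^{|O|}\prod_{a\in O}\zeta_{a,\gamma}$ is a root of unity. Since $\varepsilon$ has image in $\mu_{12}$ and each $\zeta_{a,\gamma}\in\mu_{2N}$, we have $\mu(\gamma)^{12N}=1$ (as $12\mid 12N$ and $2N\mid 12N$), and therefore $g_O^{12N}(\gamma\tau)=g_O^{12N}(\tau)$ for all $\gamma\in\Gamma$.

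To finish, I would check meromorphy at the cusps. From the product formula, the $q$-expansion of $g_a$ begins with a nonzero constant times $q^{B_2(a_1)/2}$, and since $0\le a_1<1$ the exponent $B_2(a_1)/2$ is bounded below; thus the finite product $g_O$, and hence $g_O^{12N}$, has a $q$-expansion at $\infty$ with only finitely many terms of negative exponent, i.e.\ is meromorphic at $\infty$. For an arbitrary cusp, pick $\gamma_0\in\SL_2(\ZZ)$ carrying $\infty$ to it and apply part (iii) of \cref{Siegel functions} to replace each $g_a$ by $\varepsilon(\gamma_0)g_{a\gamma_0}$, whose leading term is controlled in the same way, reducing to the previous case. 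So $g_O^{12N}$ is $\Gamma$-invariant and meromorphic on all of $X(\Gamma)$, hence a modular function for $\Gamma$.

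The argument is essentially bookkeeping; the only subtle point is keeping track of the two separate sources of roots of unity—the automorphy factor $\varepsilon(\gamma)$ from part (iii), of order dividing $12$, and the normalization factors $\zeta_{a,\gamma}$ from parts (i)--(ii), of order dividing $2N$—and observing that the exponent $12N$ is chosen precisely so that raising to that power annihilates both. The orbit hypothesis is what lets one rewrite $\prod_{a\in O}g_{a*\gamma}$ as $g_O$, and it is also the reason one must replace $a\gamma$ by its representative $a*\gamma\in A_N$ rather than working with $a\gamma$ directly.
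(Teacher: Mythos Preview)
Your argument is correct. The paper does not actually prove this lemma; it simply records it as Lemma~4.2 of Sutherland--Zywina \cite{MR3671434} and moves on. Your write-up is essentially the standard direct verification one would expect in that reference: use property~(iii) of \cref{Siegel functions} to move $\gamma$ across, use (i)--(ii) to replace $a\gamma$ by its representative $a*\gamma\in A_N$ at the cost of a $2N$-th root of unity, use the orbit hypothesis to recognize the resulting product as $g_O$, and then kill the accumulated $12$-th and $2N$-th roots of unity by raising to the $12N$-th power. The cusp-meromorphy check via the leading $q$-exponent $B_2(a_1)/2$ and the observation that $g_a$ is nowhere vanishing on $\calH$ are likewise the expected ingredients. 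So there is nothing to compare: you have supplied the proof the paper chose to outsource.
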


\subsection{Multiple cusps}

Assume that $\Gamma$ has more than one cusp. In this part, we will discuss how to compute a hauptmodul for $\Gamma.$

Let $O_{1},\dots,O_{n}$ be the distinct $\Gamma$-orbits of $A_{N}$. For each $O_{i}$, let $D_{i}$ be the divisor of $g_{O_{i}}^{12N}$ on $X(\Gamma)$. The divisor $D_i$ is supported on the cusps of $\Gamma$. Let $P_1,\dots,P_r$ be the distinct cusps of $\Gamma$ and let $w_j$ be the width of the cusp $P_j$. For each $1 \le j \le r$, let $A_j$ be a matrix in $\SL_2(\ZZ)$ such that $A_j.\infty=s_{j}$, where $s_j \in \Q \cup \{\infty\}$ is a representative of the cusp $P_{j}$. Lemma $4.3$ of \cite{MR3671434} states that \[D_{i}=
\sum\limits_{j=1}^{r}\bigg(6Nw_{j}\sum\limits_{a \in O_i}B_{2}(\{(aA_{j})_{1}\})\bigg).P_{j},\] where for $x \in \RR$, $\{x\}$ satisfies $0 \le \{x\} < 1$ and $x - \{x\} \in \ZZ.$

The following is Lemma $4.4$ from section $4$ of \cite{MR3671434}; we use it to explicitly construct hauptmoduls when $\Gamma$ has multiple cusps.

\begin{lemma}
Suppose there is an $m \in \ZZ^{n}$ such that $\sum\limits_{i=1}^{n} m_{i}D_{i} = -12N.P_{1}+12N.P_{2}$ where $P_{1}$ is the cusp at infinity and $P_2$ is any other cusp of $\Gamma$. Let $0 \le e < 2N^{2}$ be an integer satisfying \[e \equiv \sum\limits_{i=1 }^{n}m_{i}\sum\limits_{a\in O_{i}} Na_{2}(N -Na_{1}) \pmod{2N^{2}}.\] Then $h := (\zeta_{2N^2})^{e}\prod_{ i=1 }^{n} g_{ O_{i}}^{m_{i}}$ is a hauptmodul for $\Gamma$ whose $q$-expansion has coefficients in $K_{N}$. On $X(\Gamma)$, we have $\divv(h) = -P_{1}+P_{2}$.
\end{lemma}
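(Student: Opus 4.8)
The plan is to establish modularity of $h$ indirectly — by comparing it with an abstract hauptmodul of the prescribed divisor — and to reserve the explicit Siegel‑function formula only for the claim about the $q$‑expansion, where the exponent $e$ is precisely what cancels the extraneous roots of unity.

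First I would put $F := \prod_{i=1}^{n}(g_{O_i}^{12N})^{m_i} = \big(\prod_{i=1}^{n} g_{O_i}^{m_i}\big)^{12N}$. By the preceding lemma each $g_{O_i}^{12N}$ is a modular function for $\Gamma$ with $\divv(g_{O_i}^{12N}) = D_i$ on $X(\Gamma)$, so $F$ is a modular function for $\Gamma$ with $\divv(F) = \sum_i m_i D_i = -12N\,P_1 + 12N\,P_2$ by hypothesis; since $h^{12N} = \zeta_{2N^2}^{12Ne} F$ differs from $F$ by a nonzero constant, $h^{12N}$ is a modular function for $\Gamma$ with the same divisor. Because $X(\Gamma)$ has genus $0$ and $P_1 \ne P_2$, the degree‑$0$ divisor $P_2 - P_1$ is principal on $X(\Gamma)$, so I can choose a meromorphic function $t$ on $X(\Gamma)$ with $\divv(t) = -P_1 + P_2$; having a single simple pole, $\CC(X(\Gamma)) = \CC(t)$, and since that pole is at $P_1 = \infty$, $t$ is a hauptmodul for $\Gamma$. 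Then $h^{12N}/t^{12N}$ is a modular function for $\Gamma$ with trivial divisor on the compact curve $X(\Gamma)$, hence equals a constant $c \in \CC^{\times}$. Each $g_a$ is holomorphic and nowhere zero on $\calH$, so $h = \zeta_{2N^2}^{e}\prod_i g_{O_i}^{m_i}$ is too, while $t$ has its zero and its pole at cusps; thus $h/t$ is a holomorphic, nowhere‑zero function on the connected set $\calH$ with $(h/t)^{12N} = c$, and is therefore constant. So $h = c' t$ for some $c' \in \CC^{\times}$: in particular $h$ is a modular function for $\Gamma$, indeed a hauptmodul, with $\divv(h) = -P_1 + P_2$ on $X(\Gamma)$, i.e. a unique, simple pole at the cusp $\infty$.

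Next I would compute the $q$‑expansion. Writing $a = (c_1/N, c_2/N) \in A_N$ with $c_1, c_2 \in \ZZ$ and $0 \le c_1 \le N/2$, the product formula for $g_a$ gives $g_a = -\,\zeta_{2N^2}^{\,c_2(c_1 - N)}\, q^{\,B_2(a_1)/2}\, P_a$, where $P_a \in K_N[[q^{1/N}]]$ is a unit: its constant term is $1$ if $c_1 > 0$ and $1 - \zeta_N^{c_2} \in K_N^{\times}$ if $c_1 = 0$, and every other exponent occurring in $P_a$ is a positive integer multiple of $1/N$ (using $0 \le c_1 < N$ and $n \ge 1$) with coefficient a power of $\zeta_N$. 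Substituting and collecting terms, $h = \zeta_{2N^2}^{e}\prod_i\prod_{a\in O_i} g_a^{m_i}$ becomes $\big[\,\zeta_{2N^2}^{e}\,(-1)^{\sum_i m_i |O_i|}\,\zeta_{2N^2}^{\sum_i m_i\sum_{a\in O_i} c_2(c_1-N)}\,\big]\; q^{\frac12\sum_i m_i\sum_{a\in O_i} B_2(a_1)}\;\prod_i\prod_{a\in O_i} P_a^{m_i}$. Since $Na_2(N - Na_1) = c_2(N - c_1) = -c_2(c_1 - N)$, the defining congruence for $e$ reads exactly $e \equiv -\sum_i m_i\sum_{a\in O_i} c_2(c_1 - N) \pmod{2N^2}$, so the bracketed constant collapses to $(-1)^{\sum_i m_i |O_i|} = \pm 1$. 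Reading the coefficient of $P_1$ off the divisor formula (for $P_1 = \infty$ one may take $A_1 = I$ and has $\{a_1\} = a_1$, giving $P_1$‑coefficient $6Nw_1\sum_i m_i\sum_{a\in O_i} B_2(a_1)$) and matching it with the $P_1$‑coefficient $-12N$ of $\sum_i m_i D_i$ yields $\sum_i m_i\sum_{a\in O_i} B_2(a_1) = -2/w_1$, so the exponent of $q$ is $-1/w_1$. As $w_1 \mid N$, $q^{-1/w_1}$ is a negative integer power of $q^{1/N}$, and $\prod_i\prod_{a\in O_i} P_a^{m_i}$ is a unit of $K_N[[q^{1/N}]]$; hence $h = \pm\, q^{-1/w_1}\cdot(\text{unit of } K_N[[q^{1/N}]])$, whose $q$‑expansion has all coefficients in $K_N$ and a simple pole at $\infty$, in agreement with the first step.

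The main obstacle is the root‑of‑unity bookkeeping in the second step: each $g_a$ individually carries a $2N^2$‑th root of unity in front and a power $q^{B_2(a_1)/2}$ whose exponent is typically not in $\tfrac1N\ZZ$, so it is not evident a priori that the product lies in $K_N((q^{1/N}))$. The divisor hypothesis $\sum_i m_i D_i = -12N\,P_1 + 12N\,P_2$ is exactly what forces the fractional $q$‑exponents to sum to the admissible value $-1/w_1$, and the formula for $e$ is rigged precisely to annihilate the accumulated $\zeta_{2N^2}$‑factors; by contrast the first step is essentially formal once one observes that $h^{12N}$ is a genuine modular function whose divisor is $12N$ times that of a hauptmodul.
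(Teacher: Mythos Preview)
Your proof is correct. The paper does not supply its own argument for this lemma; it is quoted verbatim as Lemma~4.4 of \cite{MR3671434}, so there is no in-paper proof to compare against. Your two-step approach --- first deducing that $h$ itself is modular by comparing $h^{12N}$ with the $12N$-th power of an abstract hauptmodul and using connectedness of $\calH$ to pass from $(h/t)^{12N}=c$ to $h/t=c'$, then reading off the $K_N$-rationality of the $q$-expansion and the leading exponent $-1/w_1$ directly from the product formula, with the congruence on $e$ cancelling the accumulated $\zeta_{2N^2}$-factor --- is the natural and standard line of reasoning. One small remark: in the second step the explicit expansion a priori lies in $K_N((q^{1/N}))$ rather than $K_N((q^{1/w_1}))$, but since step~1 already shows $h$ is $\Gamma$-modular (hence has a genuine $q^{1/w_1}$-expansion) and $w_1\mid N$, the coefficients of the $q^{1/w_1}$-expansion are automatically among those of the $q^{1/N}$-expansion and therefore lie in $K_N$.
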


To verify the existence of an $n$-tuple $m$ as described in the previous lemma we used the generators for $\Gamma$ as described in \cite{MR2016709} and the set of generators obtained after conjugation with $A$ (if $\Gamma$ and $\Gamma^{'}$ are not conjugate in $\SL_2(\ZZ)$). The existence of $n$-tuple was verified using \texttt{Magma} for all genus 0 congruence subgroups with multiple cusps except those with labels $10E^0$, $12F^0$ and $12H^0$. It can be checked that the congruence subgroups with labels $10E^0$, $12F^0$ and $12H^0$ are listed up to conjugation in $\SL_2(\ZZ).$ We describe hauptmoduls for $10E^0$ and $12F^0$ later in the section. For the genus 0 congruence subgroup with label $12H^0$, the $n$-tuple exists when we first conjugate the generators with $(\begin{smallmatrix}
1 & 0 \\ 
4  & 1
\end{smallmatrix})$.

We describe the hauptmoduls for congruence subgroups with label $10E^0$ and $12F^0$ below. These hauptmoduls are computed with respect to a set of generators modulo $N$ denoted by $S_{\Gamma}.$ Here $N$ is the level of $\Gamma.$

\begin{remark}

In the following cases the function $h$ comes from section $5.3$ of \cite{Rademacher}. Unfortunately, their tables are no longer available. They compute modular functions in terms of "generalized Dedekind eta functions". We take their function $h$ and then essentially adjust the constants to get a modular function for $\Gamma.$
\end{remark}

\begin{romanenum}

\item Consider the group $\Gamma$ with label $12F^0.$ Here $g_{(a_{1},a_{2})}$ denotes the Siegel function $g_{(a_{1}/12,a_{2}/12)}(\tau).$ 

The function $h$ listed below is fixed by $\Gamma$. After normalizing the $q$-expansion of $h$ we find that the coefficients lie in $K_{12}$ and it has a unique simple pole at the cusp $\infty.$    
    \begin{equation*}
        \begin{aligned}
        h&=g_{(3,11)}g_{(3,8)}g_{(6,5)}g_{(6,1)}g_{(3,7)}g_{(3,4)}g_{(1,8)}g_{(1,5)}g_{(5,4)}g_{(5,1)}g_{(2,1)}g_{(2,7)}\\
    &+\zeta_{24}^3g_{(5,2)}g_{(1,3)}g_{(1,6)}g_{(5,11)}g_{(4,3)}g_{(4,7)}g_{(4,9)}g_{(4,1)}g_{(5,3)}g_{(5,6)}g_{(1,7)}g_{(1,10)}.
     \end{aligned}
    \end{equation*}
    
    \item Consider the group $\Gamma$ with label $10E^0.$ Here $g_{(a_{1},a_{2})}$ denotes the Siegel function $g_{(a_{1}/10,a_{2}/10)}(\tau).$ 

The function $h$ listed below is fixed by $\Gamma$. After normalizing the $q$-expansion of $h$ we find that the coefficients lie in $K_{10}$ and it has a unique simple pole at the cusp $\infty.$
    \begin{equation*}
        \begin{aligned}
        h&=g_{(1,4)}g_{(5,2)}g_{(5,4)}g_{(3,2)}(g_{(1,3)}g_{(3,9)}g_{(1,5)}g_{(3,5)})^{-1}\\
    &+\zeta_{10}^3 g_{(1,7)}g_{(1,1)}g_{(3,1)}g_{(3,3)}(g_{(0,1)}g_{(2,3)}g_{(4,1)}g_{(0,3)})^{-1}\\
    &+\zeta_{10}^2 g_{(2,5)}g_{(4,5)}g_{(2,1)}g_{(4,7)}(g_{(1,2)}g_{(1,6)}g_{(3,6)}g_{(3,8)})^{-1}.
        \end{aligned}
    \end{equation*}

\end{romanenum}

\subsection{Single cusp} In this part, we will assume that $\Gamma$ has a single cusp. In this case, we cannot express hauptmodul of $\Gamma$ as a product of Siegel functions (since such functions are modular units, i.e., their zeroes and poles lie at the cusps).\\
Let us first assume that there exists a proper genus 0 congruence subgroup $\Gamma^{'} \subseteq \Gamma$ containing $-I$ of level $N$ such that $[\Gamma:\Gamma^{'}]$ is equal to the number of cusps of $\Gamma^{'}$. From the previous subsection, we can construct a hauptmodul $h^{'}$ of $\Gamma^{'}.$ 

\begin{lemma}

The modular function $h:=\sum\limits_{\alpha \in \Gamma^{'}\backslash \Gamma} h^{'}|_{\alpha}$ is a hauptmodul for $\Gamma$ with coefficients in $K_{N}.$ 
\end{lemma}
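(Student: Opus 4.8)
The plan is to show that $h := \sum_{\alpha \in \Gamma'\backslash\Gamma} h'|_\alpha$ is a well-defined modular function for $\Gamma$, that it has a single simple pole at the cusp $\infty$ of $\Gamma$, and that its $q$-expansion has coefficients in $K_N$; since $\Gamma$ has genus $0$, a modular function with a single simple pole is automatically a hauptmodul. First I would check well-definedness: the expression $\sum_{\alpha} h'|_\alpha$ is a sum over coset representatives of $\Gamma'\backslash\Gamma$, and because $-I \in \Gamma'$ the weight-$0$ slash action $h'|_\alpha$ depends only on the coset $\Gamma'\alpha$ (replacing $\alpha$ by $\gamma'\alpha$ with $\gamma' \in \Gamma'$ gives $h'|_{\gamma'\alpha} = (h'|_{\gamma'})|_\alpha = h'|_\alpha$ since $h'$ is $\Gamma'$-invariant). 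For $\gamma \in \Gamma$, right multiplication by $\gamma$ permutes the cosets $\Gamma'\backslash\Gamma$, so $h|_\gamma = \sum_\alpha h'|_{\alpha\gamma} = \sum_\alpha h'|_\alpha = h$; hence $h$ is $\Gamma$-invariant. Meromorphy on $\calH$ is clear (a finite sum of meromorphic functions), and meromorphy at the cusps follows because each $h'|_\alpha$ is meromorphic at the cusps of $\Gamma'$, hence at those of $\Gamma$.

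Next I would analyze the divisor of $h$ on $X(\Gamma)$. The natural map $p\colon X(\Gamma') \to X(\Gamma)$ has degree $[\Gamma:\Gamma']$ (using $-I \in \Gamma' \subseteq \Gamma$). By construction, $h' $ has divisor $-Q_1 + Q_2$ on $X(\Gamma')$ for cusps $Q_1, Q_2$ of $\Gamma'$, where $Q_1$ is the cusp at $\infty$. The hypothesis $[\Gamma:\Gamma'] = \#\{\text{cusps of }\Gamma'\}$ together with the fact that $\Gamma$ has a single cusp forces, by the orbit–stabilizer/fiber-counting identity $\sum_{Q \mapsto \infty_\Gamma} e_Q = [\Gamma:\Gamma']$ applied to the unique cusp $\infty_\Gamma$ of $\Gamma$ — combined with the fact that every cusp of $\Gamma'$ lies above $\infty_\Gamma$ — that each cusp of $\Gamma'$ is unramified over $\infty_\Gamma$ and the fiber $p^{-1}(\infty_\Gamma)$ consists of all $[\Gamma:\Gamma']$ cusps of $\Gamma'$, each with multiplicity one. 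The functions $\{h'|_\alpha\}$ as $\alpha$ ranges over $\Gamma'\backslash\Gamma$ are precisely the "Galois-type conjugates" of $h'$ realizing the cosets; their poles are supported on $p^{-1}(\infty_\Gamma)$, and at the point $\infty_{\Gamma'} = Q_1$ exactly one conjugate (namely $h' = h'|_I$) has a pole, of order $1$. Pushing forward, near $\infty_\Gamma$ the sum $h = \sum_\alpha h'|_\alpha$ is a symmetric function of the sheets and hence descends to a rational function on $X(\Gamma)$ whose only pole is at $\infty_\Gamma$; tracking leading terms of the $q$-expansions at the cusps of $\Gamma'$ (each of width dividing the width of $\infty_\Gamma$, with the ramification being trivial) shows this pole is simple. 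Concretely: in the local parameter at $\infty_\Gamma$, each sheet of $X(\Gamma')$ contributes, and only one sheet has a simple pole while the rest are holomorphic there, so $h$ has a simple pole at $\infty_\Gamma$ and is holomorphic elsewhere on $X(\Gamma)$.

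For the field of definition of the coefficients: each $h'|_\alpha$ has a $q$-expansion (at the relevant cusp) with coefficients in $K_N$ by the previous subsection's construction and \cref{Siegel functions}, since $\alpha \in \SL_2(\ZZ)$ and the slash action by $\SL_2(\ZZ)$ on $q$-expansions of functions in $\calF_N$ preserves $K_N$-rationality (this is the $\SL_2(\ZZ/N\ZZ)$-part of the $*$-action in \cref{action on F_N}). A finite sum of such expansions again has coefficients in $K_N$, so $h \in \calF_N$; after normalizing (rescaling so the leading coefficient at $\infty$ is $1$ and the constant term is $0$) the coefficients remain in $K_N$.

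The main obstacle I anticipate is the divisor computation — specifically, justifying cleanly that under the hypothesis $[\Gamma:\Gamma'] = \#(\text{cusps of }\Gamma')$ the cover $X(\Gamma') \to X(\Gamma)$ is totally split over the single cusp of $\Gamma$ and totally ramified nowhere over it, so that the "trace" $h$ of $h'$ acquires exactly one simple pole. This requires carefully invoking the relation between cusp widths, ramification indices of $p$ at the cusps, and the index $[\Gamma:\Gamma']$ (each cusp $Q$ of $\Gamma'$ above $\infty_\Gamma$ has ramification index $e_Q = w(\infty_\Gamma)/w(Q)$, and $\sum_Q e_Q = [\Gamma:\Gamma']$), and then arguing that the contribution of the $\alpha \neq I$ terms to the expansion at $\infty_\Gamma$ cannot cancel the simple pole coming from $h'|_I$ — which follows because those terms are holomorphic at $\infty_\Gamma$ in the first place. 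Everything else is routine bookkeeping with $q$-expansions and the slash action.
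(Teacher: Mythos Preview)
Your argument is correct and follows the standard trace argument; the paper itself gives no proof beyond citing \cite[\S4.3.2]{MR3671434}, which carries out essentially the same computation. One small slip: the ramification index of a cusp $Q$ of $\Gamma'$ over $\infty_\Gamma$ is $e_Q = w(Q)/w(\infty_\Gamma)$, not the reciprocal (the local parameter at $\infty_\Gamma$ is $q^{1/w(\infty_\Gamma)}$, and its pullback to $Q$ is $(q^{1/w(Q)})^{w(Q)/w(\infty_\Gamma)}$). This does not affect your conclusion, since the identity $\sum_Q e_Q = [\Gamma:\Gamma']$ together with the hypothesis still forces every $e_Q = 1$; hence the cosets $\Gamma'\backslash\Gamma$ biject with the cusps of $\Gamma'$ via $\alpha \mapsto \alpha\cdot\infty$, and exactly one summand $h'|_\alpha$ contributes a (simple) pole at $\infty$, as you argue.
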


\begin{proof}
 Refer to \cite{MR3671434}, section 4.3.2.
\end{proof}

\subsection{Remaining cases}In this part, we express the hauptmodul for remaining single cusp cases in terms of Siegel functions. In each of these cases there exists a proper congruence subgroup $\Gamma^{'} \subseteq \Gamma$ containing $-I$ of level $N$ (not necessarily of genus $0$) such that $[\Gamma:\Gamma^{'}]$ is equal to the number of cusps of $\Gamma^{'}$. We take a modular function $f$ for $\Gamma^{'}$. The function $f$ comes from section $5.3$ of \cite{Rademacher}. Then, we compute the function $h:=\sum\limits_{\alpha \in \Gamma^{'}\backslash \Gamma} f|_{\alpha}$. It is a modular function for $\Gamma$ whose unique simple pole lies at infinity. This follows from section 4.3.2 of \cite{MR3671434}. After normalizing the $q$-expansion of $h$, we obtain a hauptmodul for $\Gamma.$ We give $f$, a set $S$ of representatives of $\Gamma^{'}\backslash \Gamma$ and the hauptmodul $h.$    

\begin{romanenum}
    \item Consider the group $\Gamma$ with label $14A^0$. Here $g_{(a_{1},a_{2})}$ denotes the Siegel function $g_{(a_{1}/14,a_{2}/14)}(\tau).$ The set $S=\{(\begin{smallmatrix}
1 & 0 \\ 
0  & 1
\end{smallmatrix}),(\begin{smallmatrix}
0 & 5 \\ 
11  & 7
\end{smallmatrix}),(\begin{smallmatrix}
0 & 1 \\ 
13  & 13
\end{smallmatrix}),(\begin{smallmatrix}
0 & 3 \\ 
9  & 13
\end{smallmatrix})\}$,
\\$f:=(\zeta_{14}^5 - \zeta_{14}^4 + \zeta_{14}^3 - \zeta_{14}^2 + \zeta_{14} - 1)g_{(2,0)}g_{(2,10)}g_{(4,2)}
    g_{(4,4)}g_{(6,4)}g_{(6,12)}.$ The resulting hauptmodul is
    \begin{equation*}
    \begin{aligned}
    h&=g_{(0,2)}g_{(2,8)}g_{(2,12)}g_{(4,0)}g_{(4,6)}g_{(4,12)}\\
    &-\zeta_{14}^4g_{(0,4)}g_{(2,2)}g_{(2,4)}g_{(2,6)}g_{(6,2)}
    g_{(6,8)}\\
    &+\zeta_{14} g_{(0,6)}g_{(4,8)}g_{(4,10)}g_{(6,0)}g_{(6,6)}g_{(6,10)}\\
    &+(\zeta_{14}^5 - \zeta_{14}^4 + \zeta_{14}^3 - \zeta_{14}^2 + \zeta_{14} - 1)g_{(2,0)}g_{(2,10)}g_{(4,2)}
    g_{(4,4)}g_{(6,4)}g_{(6,12)}.
    \end{aligned}
    \end{equation*}
    
    \item Consider the group $\Gamma$ when it is the conjugate of the congruence subgroup with label $14A^0$ by A. Here $g_{(a_{1},a_{2})}$ denotes the Siegel function $g_{(a_{1}/14,a_{2}/14)}(\tau).$
    The set $S=\{(\begin{smallmatrix}
1 & 0 \\ 
0  & 1
\end{smallmatrix}),(\begin{smallmatrix}
0 & -5 \\ 
-11  & 7
\end{smallmatrix}),(\begin{smallmatrix}
0 & -1 \\ 
-13  & 13
\end{smallmatrix}),(\begin{smallmatrix}
0 & -3 \\ 
-9  & 13
\end{smallmatrix})\},$ $f:=\zeta_{14}^3g_{(2,0)}g_{(2,4)}g_{(4,12)}
    g_{(4,10)}g_{(6,10)}g_{(6,2)}.$ The resulting hauptmodul is
    \begin{equation*}
        \begin{aligned}
        h&=-g_{(0,2)}g_{(2,6)}g_{(2,2)}g_{(4,0)}g_{(4,8)}g_{(4,2)}\\
    &+g_{(0,4)}g_{(2,12)}g_{(2,10)}g_{(2,8)}g_{(6,12)}
    g_{(6,6)}\\
    &+\zeta_{14}^2 g_{(0,6)}g_{(4,6)}g_{(4,4)}g_{(6,0)}g_{(6,8)}g_{(6,4)}\\
     &- \zeta_{14}^3g_{(2,0)}g_{(2,4)}g_{(4,12)}
    g_{(4,10)}g_{(6,10)}g_{(6,2)}.
    \end{aligned}
    \end{equation*}
    
    \item Consider the group $\Gamma$ with label $15A^0$. Here $g_{(a_{1},a_{2})}$ denotes the Siegel function $g_{(a_{1}/15,a_{2}/15)}(\tau).$ The set $S=\{(\begin{smallmatrix}
1 & 0 \\ 
0  & 1
\end{smallmatrix}),(\begin{smallmatrix}
0 & 1 \\ 
14  & 14
\end{smallmatrix}),(\begin{smallmatrix}
1 & 1 \\ 
14  & 0
\end{smallmatrix})\},$ $f:=g_{(0,3)}g_{(0,6)}g_{(3,12)}g_{(6,9)}.$ The resulting hauptmodul is
\begin{equation*}
    \begin{aligned}
    h&=g_{(0,3)}g_{(0,6)}g_{(3,12)}g_{(6,9)}\\
    &-\zeta_{15}^{2}g_{(3,6)}g_{(3,3)}g_{(6,12)}g_{(6,6)}\\
    &+(\zeta_{15}^5+1)g_{(3,9)}g_{(3,0)}g_{(6,0)}g_{(6,3)}.
    \end{aligned}
\end{equation*}

    \item Consider the group $\Gamma$ when it is the conjugate of the congruence subgroup with label $15A^0$ by A. Here $g_{(a_{1},a_{2})}$ denotes the Siegel function $g_{(a_{1}/15,a_{2}/15)}(\tau).$
    The set $S=\{(\begin{smallmatrix}
1 & 0 \\ 
0  & 1
\end{smallmatrix}),(\begin{smallmatrix}
0 & -1 \\ 
-14  & 14
\end{smallmatrix}),(\begin{smallmatrix}
1 & -1 \\ 
-14  & 0
\end{smallmatrix})\},$  $f:=g_{(0,3)}g_{(0,6)}g_{(3,3)}g_{(6,6)}.$ The resulting hauptmodul is
    \begin{equation*}
        \begin{aligned}
        h&=g_{(0,3)}g_{(0,6)}g_{(3,3)}g_{(6,6)}\\
    &+\zeta_{15}g_{(3,9)}g_{(3,12)}g_{(6,9)}g_{(6,3)}\\
    &-\zeta_{15}^5g_{(3,6)}g_{(3,0)}g_{(6,0)}g_{(6,12)}.
        \end{aligned}
    \end{equation*}

    \item Consider the group $\Gamma$ with label $21A^0$. Here $g_{(a_{1},a_{2})}$ denotes the Siegel function $g_{(a_{1}/21,a_{2}/21)}(\tau).$ The set $S=\{(\begin{smallmatrix}
1 & 0 \\ 
0  & 1
\end{smallmatrix}),(\begin{smallmatrix}
0 & 1 \\ 
20  & 6
\end{smallmatrix}),(\begin{smallmatrix}
0 & 2 \\ 
10  & 15
\end{smallmatrix}),$ $f:=g_{(0,6)}g_{(3,18)}g_{(3,3)}g_{(3,9)}g_{(3,12)}g_{(6,9)}g_{(6,12)}g_{(9,0)}.$ The resulting hauptmodul is
\begin{equation*}
    \begin{aligned}
    h&=-g_{(0,3)}g_{(3,15)}g_{(6,15)}g_{(6,18)}g_{(6,0)}g_{(6,3)}g_{(9,18)}g_{(9,9)}\\
    &+g_{(0,6)}g_{(3,18)}g_{(3,3)}g_{(3,9)}g_{(3,12)}g_{(6,9)}g_{(6,12)}g_{(9,0)}\\
    &-\zeta_{21}^{6}g_{(0,9)}g_{(3,0)}g_{(3,6)}g_{(6,6)}g_{(9,12)}g_{(9,15)}g_{(9,3)}g_{(9,6)}.
    \end{aligned}
\end{equation*}

    \item Consider the group $\Gamma$ when it is the conjugate of the congruence subgroup with label $21A^0$ by A. Here $g_{(a_{1},a_{2})}$ denotes the Siegel function $g_{(a_{1}/21,a_{2}/21)}(\tau).$ The set $S=\{(\begin{smallmatrix}
1 & 0 \\ 
0  & 1
\end{smallmatrix}),(\begin{smallmatrix}
0 & -1 \\ 
-20  & 6
\end{smallmatrix}),(\begin{smallmatrix}
0 & -2 \\ 
-10  & 15
\end{smallmatrix}),$ $f:=(-\zeta_{21}^{11}-\zeta_{21}^4)g_{(0,6)}g_{(3,3)}g_{(3,18)}g_{(3,12)}g_{(3,9)}g_{(6,12)}g_{(6,9)}g_{(9,0)}.$ The resulting hauptmodul is
    \begin{equation*}
        \begin{aligned}
        h&=g_{(0,3)}g_{(3,6)}g_{(6,6)}g_{(6,3)}g_{(6,0)}g_{(6,18)}g_{(9,3)}g_{(9,12)}\\
    &+(-\zeta_{21}^{11}-\zeta_{21}^4)g_{(0,6)}g_{(3,3)}g_{(3,18)}g_{(3,12)}g_{(3,9)}g_{(6,12)}g_{(6,9)}g_{(9,0)}\\
    &+(\zeta_{21}^{11} - \zeta_{21}^9 + \zeta_{21}^8 - \zeta_{21}^6 + \zeta_{21}^4 - \zeta_{21}^3 + \zeta_{21} - 1)g_{(0,9)}g_{(3,0)}g_{(3,15)}g_{(6,15)}g_{(9,9)}g_{(9,6)}g_{(9,18)}g_{(9,15)}.
        \end{aligned}
    \end{equation*}

    \item When $\Gamma$ has level $11.$ A hauptmodul in terms of Siegel functions for any genus 0 congruence subgroup of level $11$ is described in section 4.3.3 of \cite{MR3671434}.

\end{romanenum}

\section{Twists}\label{sec:Twists}

\subsection{Background and preliminaries}

For background in Galois cohomology see section 5.1, Chapter 1 of \cite{MR1466966}. We recall the definitions and concepts that we need for reading this article.  

Let $G$ be a topological group and $A$ be a $G$-group i.e., $A$ is a topological group with the discrete topology, with a continuous $G$-action that respects the group law of $A$.

\begin{definition}

A \defi{cocycle} $\zeta \colon G \to A$, is a continuous function satisfying the \defi{cocycle property}, i.e., $\zeta(\sigma \tau)=\zeta(\sigma) \cdot \sigma(\zeta(\tau))$ for every $\sigma,\tau \in G.$ We say that two cocycles $\zeta_1$ and $\zeta_2$ are \defi{cohomologous} if there is an $a \in A$ such that $\zeta_1(\sigma)=a \cdot \zeta_2(\sigma) \cdot \sigma(a^{-1})$ for every $\sigma \in G.$  
\end{definition}

It is easy to check that being cohomologous is an equivalence relation. We will denote the set of cocycles modulo this relation by $H^{1}(G,A).$ Let $e \colon G \to A$ be the cocycle which sends $g$ to $1$ for every $g \in G$, where $1$ is the identity element of $A.$ We say that a cocycle $\zeta \colon G \to A$ is a \defi{coboundary} if $\zeta$ is cohomologous to $e.$

Let $K$ and $K'$ be Galois extensions of $\Q$ such that $K \intersect K'=\Q.$ Let $KK'$ denote the compositum of $K$ and $K'.$ Let $i \colon \Gal(K/\Q) \times \Gal(K'/\Q) \to \Gal(KK'/\Q)$ be the isomorphism whose inverse is $g \mapsto (g|_K,g|_{K'}).$ Let $A$ be a $\Gal(KK'/\Q)$-group for which $\Gal(KK'/K)$ acts trivially. There is a natural action of $\Gal(K/\Q)$ on $A.$ We let $\Gal(K'/\Q)$ act trivially on $A.$

Let $\phi \colon \Gal(KK'/\Q) \to A$ be a cocycle. We have assumed that the group $\Gal(KK'/\Q)$ acts via $\Gal(K/\Q)$, i.e., $i((\sigma,\tau))(a)=\sigma(a)$ for every $(\sigma,\tau) \in \Gal(K/\Q) \times \Gal(K'/\Q)$ and $a \in A.$

\begin{lemma} \label{breakup of cocs}
Let $\zeta \colon \Gal(K/\Q) \to A$ be defined as $\zeta(\sigma)=\phi(i(\sigma,1))$. Let $\gamma \colon \Gal(K'/\Q) \to A$ be defined as $\gamma(\tau)=\phi(i(1,\tau)).$  
The following properties hold:
\begin{romanenum}
    \item The maps $\zeta$ and $\gamma$ are cocycles. In particular, $\gamma$ is a homomorphism. Further, the image of $\gamma$ is contained in the set \[\{a \in A|~ \sigma(a)=\zeta(\sigma)^{-1}\cdot a\cdot \zeta(\sigma)~\forall~\sigma \in \Gal(K/\Q)\}.\] 

\item Conversely, if $\zeta \colon \Gal(K/\Q) \to A$ and $\gamma \colon \Gal(K'/\Q) \to A$ are two cocycles such that the image of $\gamma$ is contained in the set 
\[\{a \in A|~ \sigma(a)=\zeta(\sigma)^{-1}\cdot a\cdot \zeta(\sigma)~\forall~\sigma \in \Gal(K/\Q)\},\] then the map $\phi \colon \Gal(K/\Q) \times \Gal(K'/\Q) \to A$ defined as $\phi((\sigma,\tau))=\zeta(\sigma)\cdot \sigma(\gamma(\tau))$ is a cocycle.

\end{romanenum}
\end{lemma}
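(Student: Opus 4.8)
The plan is to verify each assertion by a direct computation with the cocycle relation, keeping careful track of which Galois group acts on $A$ and which acts trivially. For part (i), I would first check that $\zeta$ is a cocycle: for $\sigma_1,\sigma_2 \in \Gal(K/\Q)$, since $i$ is a homomorphism we have $i(\sigma_1\sigma_2,1) = i(\sigma_1,1)\,i(\sigma_2,1)$, so the cocycle property of $\phi$ gives $\zeta(\sigma_1\sigma_2) = \phi(i(\sigma_1,1)) \cdot i(\sigma_1,1)\big(\phi(i(\sigma_2,1))\big) = \zeta(\sigma_1)\cdot \sigma_1(\zeta(\sigma_2))$, using that $i(\sigma_1,1)$ acts on $A$ as $\sigma_1$. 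The same computation applied to $\gamma$, with $i(1,\tau_1\tau_2) = i(1,\tau_1)\,i(1,\tau_2)$ and the fact that $i(1,\tau_1)$ acts \emph{trivially} on $A$, shows $\gamma(\tau_1\tau_2) = \gamma(\tau_1)\cdot i(1,\tau_1)(\gamma(\tau_2)) = \gamma(\tau_1)\gamma(\tau_2)$; hence $\gamma$ is a homomorphism. For the constraint on the image of $\gamma$: I would expand $\phi(i(\sigma,1)\,i(1,\tau))$ and $\phi(i(1,\tau)\,i(\sigma,1))$ in two ways. Since $\Gal(K/\Q)\times\Gal(K'/\Q)$ is a direct product, $i(\sigma,1)$ and $i(1,\tau)$ commute, so both products equal $i(\sigma,\tau)$. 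Expanding $\phi(i(\sigma,1)\cdot i(1,\tau)) = \zeta(\sigma)\cdot\sigma(\gamma(\tau))$ and $\phi(i(1,\tau)\cdot i(\sigma,1)) = \gamma(\tau)\cdot \zeta(\sigma)$ (using trivial action of $i(1,\tau)$ on $\zeta(\sigma)\in A$), and equating, gives $\zeta(\sigma)\cdot\sigma(\gamma(\tau)) = \gamma(\tau)\cdot\zeta(\sigma)$, i.e.\ $\sigma(\gamma(\tau)) = \zeta(\sigma)^{-1}\cdot\gamma(\tau)\cdot\zeta(\sigma)$, which is exactly the claimed containment.

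For part (ii), I would define $\phi((\sigma,\tau)) = \zeta(\sigma)\cdot\sigma(\gamma(\tau))$ and verify the cocycle property directly. Given $(\sigma_1,\tau_1),(\sigma_2,\tau_2)$, the product in $\Gal(K/\Q)\times\Gal(K'/\Q)$ is $(\sigma_1\sigma_2,\tau_1\tau_2)$, so $\phi$ of the product is $\zeta(\sigma_1\sigma_2)\cdot(\sigma_1\sigma_2)(\gamma(\tau_1\tau_2))$. Using the cocycle property of $\zeta$, that $\gamma$ is a homomorphism, and continuity/compatibility of the action, I would rewrite this as $\zeta(\sigma_1)\cdot\sigma_1(\zeta(\sigma_2))\cdot\sigma_1\sigma_2(\gamma(\tau_1))\cdot\sigma_1\sigma_2(\gamma(\tau_2))$. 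On the other hand, $\phi((\sigma_1,\tau_1))\cdot (\sigma_1,\tau_1)\big(\phi((\sigma_2,\tau_2))\big) = \zeta(\sigma_1)\sigma_1(\gamma(\tau_1))\cdot\sigma_1\big(\zeta(\sigma_2)\sigma_2(\gamma(\tau_2))\big) = \zeta(\sigma_1)\sigma_1(\gamma(\tau_1))\sigma_1(\zeta(\sigma_2))\sigma_1\sigma_2(\gamma(\tau_2))$. Comparing the two expressions, it remains to show $\sigma_1(\zeta(\sigma_2))\cdot\sigma_1\sigma_2(\gamma(\tau_1)) = \sigma_1(\gamma(\tau_1))\cdot\sigma_1(\zeta(\sigma_2))$; applying $\sigma_1$ to the identity $\zeta(\sigma_2)\cdot\sigma_2(\gamma(\tau_1)) = \gamma(\tau_1)\cdot\zeta(\sigma_2)$, which is precisely the hypothesis on the image of $\gamma$ (rearranged), yields this. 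Hence $\phi$ is a cocycle.

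The computations are all routine manipulations with the cocycle identity; the only point requiring care — and the step I expect to be the main (minor) obstacle — is bookkeeping the two different actions: $i(\sigma,1)$ acts on $A$ as $\sigma$, while $i(1,\tau)$ acts trivially, and one must consistently use that $\Gal(KK'/K)$ acts trivially so that elements of $A$ are genuinely fixed by the $K'$-part. I would also remark that continuity of $\phi$ follows from continuity of $\zeta$, $\gamma$, and the action, so that $\phi$ is indeed a cocycle in the required topological sense. No deeper input is needed.
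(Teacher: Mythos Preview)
Your proposal is correct and follows essentially the same approach as the paper's proof: both parts are handled by direct cocycle computations exploiting that $i(\sigma,1)$ acts as $\sigma$ while $i(1,\tau)$ acts trivially, and the key identity $\zeta(\sigma)\cdot\sigma(\gamma(\tau)) = \gamma(\tau)\cdot\zeta(\sigma)$ is obtained and used in the same way. The only cosmetic difference is that in part~(ii) the paper arranges the verification as a single chain of equalities (applying the hypothesis inside $\sigma_1(\,\cdot\,)$), whereas you expand both sides separately and compare; the content is identical.
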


\begin{proof}

\item \begin{romanenum}
    \item We will show that $\zeta$ satisfies the cocycle property. Using cocycle property of $\phi$ we have that for any $\sigma_1$, $\sigma_2 \in \Gal(K/\Q)$, $\zeta(\sigma_1 \sigma_2)=\phi(i(\sigma_1\sigma_2,1))=\phi(i(\sigma_1,1)i(\sigma_2,1))=\phi(i(\sigma_1,1))\sigma_1(\phi(i(\sigma_2,1)))=\zeta(\sigma_1) \cdot \sigma_1(\zeta(\sigma_2)).$
    
    Similarly, we can easily show from the definition of $\gamma$ and the cocycle property of $\phi$ that $\gamma$ is a cocycle. It is a homomorphism because the group $\Gal(KK'/\Q)$ acts via $\Gal(K/\Q).$
    
    We know that \[\phi((\sigma,\tau))=\phi((\sigma,1))\cdot \sigma(\phi(1,\tau))=\zeta(\sigma) \cdot \sigma(\gamma(\tau))\] and \[\phi((\sigma,\tau))=\phi((1,\tau))\cdot \phi((\sigma,1))=\gamma(\tau)\cdot\zeta(\sigma).\]
    Combining these, we get that the image of $\gamma$ is contained in the set \[\{a \in A|~ \sigma(a)=\zeta(\sigma)^{-1}\cdot a\cdot \zeta(\sigma)~\forall~\sigma \in \Gal(K/\Q)\}.\]

\item We will show that $\phi$ satisfies the cocycle property. Let $(\sigma_1,\tau_1)$ and $(\sigma_2,\tau_2) \in \Gal(K/\Q) \times \Gal(K'/\Q).$ Then,

\begin{equation*}
\begin{aligned}
 \phi((\sigma_1,\tau_1)(\sigma_2,\tau_2))&= \zeta(\sigma_1\sigma_2) \cdot (\sigma_1\sigma_2)(\gamma(\tau_1\tau_2))\\
 &=\zeta(\sigma_1) \cdot \sigma_1 (\zeta(\sigma_2) \cdot \sigma_2(\gamma(\tau_1\tau_2)))\\
 &=\zeta(\sigma_1) \cdot \sigma_1 (\gamma(\tau_1)\gamma(\tau_2)\cdot \zeta(\sigma_2))\\
 &=\zeta(\sigma_1) \cdot \sigma_1(\gamma(\tau_1)) \cdot \sigma_1(\gamma(\tau_2)\cdot \zeta(\sigma_2))\\
 &=\phi((\sigma_1,\tau_1)) \cdot (\sigma_1,\tau_1)(\phi((\sigma_2,\tau_2))).
\end{aligned}
\end{equation*}
\end{romanenum}

\end{proof}

Fix a nice curve $X$ over $\Q$ with a nonconstant morphism $\pi_X \colon X \to \mathbb{P}^{1}_{\Q}.$ Let $K$ be a Galois extension of $\Q.$ Since $\pi_X$ is nonconstant, $\Aut_K(X,\pi_X)$ is a finite $\Gal(K/\Q)$-group.

We will now define a map $\theta$ between the set of $K$-twists of $(X,\pi_X)$ up to $\Q$-isomorphism and $H^{1}(\Gal(K/\Q),\Aut_{K}(X,\pi_X))$. 

Let $(Y,\pi_Y)$ be a $K$-twist of $(X,\pi_X).$ So, there is an isomorphism $f \colon X_K \to Y_K$ such that $\pi_{Y}\circ f = \pi_{X}.$ The map $\xi \colon \Gal(K/\Q) \to \Aut_{K}(X,\pi_X)$ defined as $\xi(\sigma)=f^{-1}\circ \sigma(f)$ is a cocycle. Using that $\pi_{Y}$ and $\pi_{X}$ are defined over $\Q$, we get that $\pi_{X} \circ \xi(\sigma)= \pi_{X}$ for every $\sigma \in \Gal(K/\Q).$ We define $\theta((Y,\pi_Y))=[\xi] \in H^{1}(\Gal(K/\Q),\Aut_{K}(X,\pi_X)).$ 

\begin{proposition}
The map $\theta$ defined above is well-defined and is a bijection between the set of $K$-twists of $(X,\pi_X)$ up to $\Q$-isomorphism and $H^{1}(\Gal(K/\Q),\Aut_{K}(X,\pi_X)).$ 
\end{proposition}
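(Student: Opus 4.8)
The plan is to establish the usual twisting-cohomology dictionary, adapted to the relative setting with the morphism to $\PP^1_\Q$. First I would verify that $\theta$ is well-defined. Given a $K$-twist $(Y,\pi_Y)$ with witnessing isomorphism $f\colon X_K \to Y_K$ satisfying $\pi_Y \circ f = \pi_X$, one checks the cocycle property of $\xi(\sigma) = f^{-1}\circ \sigma(f)$: since $\sigma(f)$ still satisfies $\pi_Y \circ \sigma(f) = \pi_X$ (as $\pi_X,\pi_Y$ are defined over $\Q$), each $\xi(\sigma)$ lies in $\Aut_K(X,\pi_X)$, and $\xi(\sigma\tau) = f^{-1}\sigma(\tau)(f) = f^{-1}\sigma(f)\cdot \sigma(f^{-1}\tau(f)) \circ(\text{rewriting via }\sigma\text{-semilinearity}) = \xi(\sigma)\cdot\sigma(\xi(\tau))$; continuity follows because $\Aut_K(X,\pi_X)$ is finite so $f$ is defined over a finite subextension. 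Replacing $f$ by another such isomorphism $f' = f\circ a$ with $a \in \Aut_K(X,\pi_X)$ replaces $\xi$ by a cohomologous cocycle, and an isomorphism $(Y,\pi_Y)\cong(Y',\pi_{Y'})$ of twists over $\Q$ composes to change $f$ the same way; so $\theta$ descends to a well-defined map on $\Q$-isomorphism classes.

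Next I would construct the inverse. Given a cocycle $\xi\colon \Gal(K/\Q)\to\Aut_K(X,\pi_X)$, use it to twist the natural Galois action on $X_K$: define a new semilinear action $\sigma \mapsto \xi(\sigma)\circ\sigma$ on $X_K$ (the cocycle condition is exactly what makes this an action), and by Galois descent for quasi-projective varieties this descends to a curve $Y$ over $\Q$ together with an isomorphism $f\colon X_K\to Y_K$ intertwining the twisted action on the source with the standard one on the target. Because each $\xi(\sigma)$ commutes with $\pi_X$ and $\pi_X$ is already defined over $\Q$ (hence Galois-equivariant for the standard action), the composite $\pi_X\colon X_K \to \PP^1_K$ is equivariant for the twisted action, so it descends to a morphism $\pi_Y\colon Y\to\PP^1_\Q$ with $\pi_Y\circ f = \pi_X$; thus $(Y,\pi_Y)$ is a $K$-twist whose class under $\theta$ is $[\xi]$. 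Conversely, starting from a twist, applying this construction to $\theta((Y,\pi_Y))$ recovers $(Y,\pi_Y)$ up to $\Q$-isomorphism since the twisted descent datum on $X_K$ is identified via $f$ with the original descent datum on $Y_K$. These two constructions are mutually inverse on the relevant sets, giving bijectivity.

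The main obstacle is the descent step: one must know that the twisted semilinear Galois action on $X_K$ is effective, i.e. actually arises from a $\Q$-form. For this one invokes Galois descent along the (finite) extension $K/\Q$, which applies because $X$ is quasi-projective (indeed projective) and the $\Aut_K(X,\pi_X)$-action is by $K$-automorphisms through which the descent datum factors; the morphism $\pi_X$ to the fixed base $\PP^1_\Q$ is then carried along for free because it is already $\Q$-rational and commutes with the twisting automorphisms. Everything else—cocycle identities, independence of choices, continuity—is routine bookkeeping with the semilinear action, using finiteness of $\Aut_K(X,\pi_X)$ throughout. I would organize the write-up as: (1) $\xi$ is a cocycle and $\theta$ is well-defined on isomorphism classes; (2) the descent construction producing a twist from a cocycle; (3) the two assignments are mutually inverse.
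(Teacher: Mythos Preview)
Your proposal is correct and follows essentially the same approach as the paper: both establish the standard twisting--cohomology dictionary in the relative setting over $\PP^1_\Q$, with the key step being Galois descent to produce a $\Q$-form from a cocycle. The only differences are organizational---the paper proves surjectivity and injectivity separately and cites Serre's \emph{Galois Cohomology} (Chapter~III, \S1, Proposition~1) for the descent step, whereas you describe the descent datum directly and frame the argument as constructing an explicit inverse.
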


\begin{proof}
We first show that $\theta$ is well defined. Let $f_1 \colon X_K \to Y_K$ and $f_2 \colon X_K \to Y_K$ be two isomorphisms such that $\pi_{Y}\circ f_i = \pi_{X}$ for $i \in \{1,2\}$. For $i \in \{1,2\}$, let $\xi_i \colon \Gal(K/\Q) \to \Aut_K(X,\pi_X)$ be the cocycle $\xi_i(\sigma)=f_i^{-1}\circ \sigma(f_i)$. Then, it is easy to check that $\xi_1$ and $\xi_2$ are cohomologous with $a=f_1^{-1}\circ f_2$. Therefore, $\theta((Y,\pi_Y)) \in H^{1}(\Gal(K/\Q),\Aut_{K}(X,\pi_X))$ and does not depend on the choice of $f.$

Suppose $(Y,\pi_Y)$ and $(Y',\pi_{Y'})$ are $\Q$-isomorphic. There is an isomorphism $h \colon Y \to Y'$ such that $\pi_{Y'}\circ h = \pi_{Y}.$ Fix an isomorphism $f \colon X_K \to Y_K$ that satisfies $\pi_Y \circ f=\pi_X.$ Then $h \circ f \colon X_K \to Y'_K$ is an isomorphism satisfying $\pi_{Y'} \circ h \circ f=\pi_X.$ Since, $(h \circ f)^{-1} \circ \sigma(h \circ f)=f^{-1} \circ h^{-1} \circ \sigma(h) \circ \sigma(f)=f^{-1} \circ \sigma(f)$, we have $\theta((Y,\pi_Y))=\theta((Y',\pi_{Y'})).$ Therefore, given a $K$-twist of $X$, we get a class $[\xi]$ in $H^{1}(\Gal(K/\Q),\Aut_{K}(X,\pi_X))$ independent of $\Q$-isomorphism class.    

Let us show that the map is surjective. Consider any class $[\xi]$ in $H^{1}(\Gal(K/\Q),\Aut_{K}(X,\pi_X)).$ Using Proposition $1$, section $1$, Chapter $3$ in \cite{MR1466966} we get a nice curve $Y$ over $\Q$ with an isomorphism $f \colon X_K \to Y_K$ such that the cocycle $f^{-1} \circ \sigma(f)$ is cohomologous to $[\xi]$ in $H^{1}(\Gal(K/\Q),\Aut(X_K))$ i.e., there exists $A \in \Aut(X_K)$ such that $f^{-1} \circ \sigma(f)=A^{-1}\circ \xi(\sigma)\circ \sigma(A)$. Rearranging the terms we get $(f\circ A^{-1})^{-1} \circ \sigma(f\circ A^{-1})=\xi(\sigma).$ Since $\pi_{X} \circ \xi(\sigma)= \pi_{X}$, the morphism $\pi_{Y}:=\pi_{X} \circ (f\circ A^{-1})^{-1}$ is defined over $\Q$ because $\pi_Y= \sigma(\pi_Y)$ for every $\sigma$ in $\Gal(K/\Q).$ Therefore $(Y,\pi_Y)$ is a $K$-twist of $(X,\pi_X)$ that maps to $[\xi].$

Finally, let us show that the map is injective. Let $\xi_{1}$
and $\xi_{2}$ are two cocycles corresponding to $(Y,\pi_Y)$ and $(Y',\pi_{Y'})$ respectively, such that $\xi_{1}(\sigma)=A^{-1}\circ \xi_{2}(\sigma)\circ \sigma(A)$ with $A \in \Aut_{K}(X,\pi_X).$ Let $f \colon X_K \to Y_K$ be an isomorphism associated to $Y$ and $g \colon X_K \to Y'_K$ be an isomorphism associated to $Y'$, then $g \circ A \circ f^{-1} \colon Y \to Y'$ is an isomorphism defined over $\Q$ such that $\pi_{Y'}\circ g \circ A \circ f^{-1}=\pi_{Y}.$  
\end{proof}

Let $(Y,\pi_Y)$ be a $K$-twist of $(X,\pi_X)$. Fix an isomorphism $f \colon X_{K} \to Y_{K}$ that satisfies $\pi_Y \circ f=\pi_X.$ The cocycle $\zeta \colon \Gal(K/\Q) \to \Aut_{K}(X,\pi_X)$, $\sigma \mapsto f^{-1}\circ \sigma(f)$ is a representative of $\theta((Y,\pi_Y)).$

\begin{lemma}\label{Aut:twists}

We have $\Aut_{K}(Y,\pi_Y)=\{f \circ g \circ f^{-1}~|~ g \in \Aut_{K}(X,\pi_X)\}.$
\end{lemma}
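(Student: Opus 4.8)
The plan is to verify the set equality by two inclusions, using only the defining properties of $f$ as an isomorphism $X_K \to Y_K$ with $\pi_Y \circ f = \pi_X$. First I would check that every element of the right-hand set lies in $\Aut_K(Y,\pi_Y)$: given $g \in \Aut_K(X,\pi_X)$, the composite $f \circ g \circ f^{-1}$ is an automorphism of $Y_K$ since it is a composition of isomorphisms $Y_K \to X_K \to X_K \to Y_K$, and it satisfies $\pi_Y \circ (f \circ g \circ f^{-1}) = (\pi_Y \circ f) \circ g \circ f^{-1} = \pi_X \circ g \circ f^{-1} = \pi_X \circ f^{-1} = (\pi_Y \circ f) \circ f^{-1} = \pi_Y$, where we used $\pi_X \circ g = \pi_X$ (since $g \in \Aut_K(X,\pi_X)$) and $\pi_Y \circ f = \pi_X$ twice. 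Hence $f \circ g \circ f^{-1} \in \Aut_K(Y,\pi_Y)$.

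For the reverse inclusion, take any $h \in \Aut_K(Y,\pi_Y)$ and set $g := f^{-1} \circ h \circ f \in \Aut(X_K)$. Then $h = f \circ g \circ f^{-1}$, so it suffices to show $g \in \Aut_K(X,\pi_X)$, i.e. $\pi_X \circ g = \pi_X$. Compute $\pi_X \circ g = \pi_X \circ f^{-1} \circ h \circ f = (\pi_Y \circ f) \circ f^{-1} \circ h \circ f = \pi_Y \circ h \circ f = \pi_Y \circ f = \pi_X$, using $\pi_X = \pi_Y \circ f$, then $\pi_Y \circ h = \pi_Y$ (since $h \in \Aut_K(Y,\pi_Y)$), and finally $\pi_Y \circ f = \pi_X$ again. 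This establishes both inclusions, hence the claimed equality.

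There is no real obstacle here; the statement is a formal consequence of the relation $\pi_Y \circ f = \pi_X$ and the fact that conjugation by the isomorphism $f$ induces a group isomorphism $\Aut(X_K) \xrightarrow{\sim} \Aut(Y_K)$ that restricts to a bijection on the subgroups stabilizing the respective morphisms to $\PP^1_\Q$. If anything deserves a word of care, it is simply noting that $f^{-1}$ also satisfies the compatibility $\pi_X \circ f^{-1} = \pi_Y$ (obtained by precomposing $\pi_Y \circ f = \pi_X$ with $f^{-1}$), which is what makes the two computations symmetric.
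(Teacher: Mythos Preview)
Your proof is correct and follows essentially the same approach as the paper: both verify the two inclusions by direct computation using $\pi_Y \circ f = \pi_X$ and its consequence $\pi_X \circ f^{-1} = \pi_Y$. The only cosmetic difference is the order in which the inclusions are presented.
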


\begin{proof}

Let $h \in \Aut_{K}(Y,\pi_Y).$ We can rewrite $h$ as $f\circ f^{-1} \circ h \circ f \circ f^{-1}$ and we have $\pi_X \circ f^{-1} \circ h \circ f =\pi_Y \circ h \circ f =\pi_Y \circ f =\pi_X.$ Therefore, $h=f \circ g \circ f^{-1}$ for some $g \in \Aut_{K}(X,\pi_X).$ We proved that $\Aut_{K}(Y,\pi_Y) \subseteq \{f \circ g \circ f^{-1}~|~ g \in \Aut_{K}(X,\pi_X)\}.$

If $g \in \Aut_{K}(X,\pi_X)$, then $\pi_Y \circ f \circ g \circ f^{-1}=\pi_X \circ g \circ f^{-1}=\pi_X \circ f^{-1}=\pi_Y.$ Therefore, $\{f \circ g \circ f^{-1}~|~ g \in \Aut_{K}(X,\pi_X)\} \subseteq \Aut_{K}(Y,\pi_Y).$
\end{proof}

\begin{lemma}\label{Automorph over Q}

We have \[\Aut(Y,\pi_Y)=\{f \circ g \circ f^{-1}~|~ g \in \Aut_{K}(X,\pi_X), \sigma(g)=\zeta(\sigma)^{-1}\circ g \circ \zeta(\sigma)~\forall~\sigma \in \Gal(K/\Q)\}.\]
\end{lemma}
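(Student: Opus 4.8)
The plan is to combine the characterization of $\Aut_K(Y,\pi_Y)$ from \cref{Aut:twists} with the Galois-descent criterion for when an automorphism of $Y_K$ is actually defined over $\Q$. Recall that for a nice curve $Y$ over $\Q$, an element $h \in \Aut(Y_K)$ lies in $\Aut(Y)$ (i.e., is defined over $\Q$) if and only if $\sigma(h) = h$ for every $\sigma \in \Gal(K/\Q)$, where $\sigma$ acts on $\Aut(Y_K)$ through its action on the coefficients. Since $\pi_Y$ is already defined over $\Q$, the condition $\pi_Y \circ h = \pi_Y$ is preserved under $\sigma$, so $\Aut(Y,\pi_Y) = \Aut_K(Y,\pi_Y)^{\Gal(K/\Q)}$, the subgroup of Galois-fixed elements of $\Aut_K(Y,\pi_Y)$.

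First I would take an arbitrary $h \in \Aut_K(Y,\pi_Y)$ and, using \cref{Aut:twists}, write $h = f \circ g \circ f^{-1}$ for a unique $g \in \Aut_K(X,\pi_X)$. Then I would compute $\sigma(h)$ for $\sigma \in \Gal(K/\Q)$: since $f$ need not be defined over $\Q$ we have $\sigma(h) = \sigma(f) \circ \sigma(g) \circ \sigma(f)^{-1}$. Now substitute $\sigma(f) = f \circ \zeta(\sigma)$, which comes from the defining relation $\zeta(\sigma) = f^{-1} \circ \sigma(f)$ of the cocycle representative. This gives
\[
\sigma(h) = f \circ \zeta(\sigma) \circ \sigma(g) \circ \zeta(\sigma)^{-1} \circ f^{-1}.
\]
Comparing with $h = f \circ g \circ f^{-1}$ and using that $f$ is an isomorphism (so conjugation by $f$ is injective), the condition $\sigma(h) = h$ becomes $\zeta(\sigma) \circ \sigma(g) \circ \zeta(\sigma)^{-1} = g$, i.e.\ $\sigma(g) = \zeta(\sigma)^{-1} \circ g \circ \zeta(\sigma)$. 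Requiring this for all $\sigma \in \Gal(K/\Q)$ yields exactly the stated set, proving both inclusions simultaneously; one should also note $g \in \Aut_K(X,\pi_X)$ throughout, which is automatic from \cref{Aut:twists}.

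I do not expect a serious obstacle here; the only point requiring mild care is being precise about the Galois action on morphisms of $Y_K$ and the identity $\sigma(f) = f \circ \zeta(\sigma)$, together with the standard fact that a morphism of $K$-varieties descends to $\Q$ precisely when it is fixed by $\Gal(K/\Q)$ (which is why $\pi_Y$ being $\Q$-rational forces $\sigma(\pi_Y \circ h) = \pi_Y \circ \sigma(h)$ and keeps us inside $\Aut_K(Y,\pi_Y)$ after applying $\sigma$). The argument is a short direct computation once these conventions are fixed, and it is essentially the pointwise version of the standard description of the automorphism group of a twist.
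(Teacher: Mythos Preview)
Your proposal is correct and follows essentially the same approach as the paper: both use the Galois-descent criterion $\Aut(Y,\pi_Y)=\Aut_K(Y,\pi_Y)^{\Gal(K/\Q)}$ together with \cref{Aut:twists}. The paper's proof is only two sentences and leaves the computation of $\sigma(h)$ and the substitution $\sigma(f)=f\circ\zeta(\sigma)$ entirely to the reader, whereas you have spelled these steps out explicitly---which is helpful and accurate.
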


\begin{proof}

An element $h \in \Aut_{K}(Y,\pi_Y)$ belongs to the group $\Aut(Y,\pi_Y)$ if and only if $\sigma(h)=h$ for all $\sigma \in \Gal(K/\Q).$ The lemma now follows using \cref{Aut:twists}.
\end{proof}

Now assume that the curve $X$ is $\PP^1_{\Q}.$ Let $(Y,\pi_Y)$ be a $K$-twist  of $(X,\pi_X).$ Fix an isomorphism $f \colon X_K \to Y_K$ and let $\zeta \colon \Gal(K/\Q) \to \Aut_K(X,\pi_X)$ be the cocycle $\zeta(\sigma)=f^{-1}\circ \sigma(f).$

\begin{lemma}\label{lemma:coboundary P^1}

The curve $Y$ is isomorphic to $\PP^1_{\Q}$ if and only if $\zeta \colon \Gal(K/\Q) \to \Aut_K(X,\pi_X) \injects \Aut(X_K)$ is a coboundary in $H^{1}(\Gal(K/\Q),\Aut(X_K)).$
\end{lemma}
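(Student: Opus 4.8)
The plan is to use the standard fact that a genus $0$ curve over $\Q$ is isomorphic to $\PP^1_\Q$ if and only if it has a rational point, together with the dictionary between twists and cohomology classes. First I would recall that $X = \PP^1_\Q$, so $\Aut(X_K) = \PGL_2(K)$, and by the proposition proved above, $K$-twists of $(X,\pi_X)$ up to $\Q$-isomorphism are classified by $H^1(\Gal(K/\Q),\Aut_K(X,\pi_X))$, with $(Y,\pi_Y)$ corresponding to the class of $\zeta(\sigma) = f^{-1}\circ\sigma(f)$. Composing with the inclusion $\Aut_K(X,\pi_X) \injects \Aut(X_K) = \PGL_2(K)$, the curve $Y$ alone (forgetting $\pi_Y$) is the twist of $\PP^1_\Q$ by the image cocycle $\zeta \colon \Gal(K/\Q) \to \PGL_2(K)$, whose class lives in $H^1(\Gal(K/\Q),\PGL_2(K)) \subseteq H^1(\Gal(K/\Q),\Aut(X_K))$.

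Next I would argue both implications. If $\zeta$ is a coboundary in $H^1(\Gal(K/\Q),\Aut(X_K))$, then by injectivity of the twist-classification bijection applied to the bare curve $\PP^1_\Q$ (i.e. the case of trivial $\pi_X$, or simply Proposition $1$, §1, Chapter $3$ of \cite{MR1466966}), the twist $Y$ is $\Q$-isomorphic to $\PP^1_\Q$. Concretely, $\zeta(\sigma) = A^{-1}\circ\sigma(A)$ for some $A \in \Aut(X_K) = \PGL_2(K)$ means that $f\circ A^{-1} \colon X_K \to Y_K$ descends to an isomorphism $\PP^1_\Q \to Y$ over $\Q$, since $(f\circ A^{-1})^{-1}\circ\sigma(f\circ A^{-1}) = A\circ\zeta(\sigma)\circ\sigma(A^{-1}) = A\circ A^{-1}\circ\sigma(A)\circ\sigma(A^{-1}) = \mathrm{id}$ for all $\sigma$, so $f\circ A^{-1}$ is Galois-equivariant and hence defined over $\Q$. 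Conversely, if $Y \isom \PP^1_\Q$ over $\Q$, fix a $\Q$-isomorphism $g \colon \PP^1_\Q \to Y$; then $f^{-1}\circ g_K \colon X_K \to X_K$ is an element $A \in \Aut(X_K)$, and unwinding $g_K = f\circ A$ together with the fact that $g$ is defined over $\Q$ (so $\sigma(g_K) = g_K$) gives $f\circ A = \sigma(f)\circ\sigma(A)$, hence $\zeta(\sigma) = f^{-1}\circ\sigma(f) = A\circ\sigma(A)^{-1} = A\circ\sigma(A^{-1})$, which exhibits $\zeta$ as a coboundary in $H^1(\Gal(K/\Q),\Aut(X_K))$ (with witness $A^{-1}$).

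I do not expect any serious obstacle here: the statement is essentially a repackaging of ``a genus $0$ curve over $\Q$ is $\PP^1_\Q$ iff it has a rational point'' phrased cohomologically via the Châtelet/descent correspondence $H^1(\Gal(K/\Q),\PGL_2(K)) \to \{\text{Severi--Brauer varieties of dim }1\}$, and the only mild subtlety is to be careful that we are computing the class of $\zeta$ in $H^1$ with coefficients in the \emph{full} automorphism group $\Aut(X_K) = \PGL_2(K)$ rather than in $\Aut_K(X,\pi_X)$ — the forgetful map on $H^1$ sends the twist $(Y,\pi_Y)$ to the twist $Y$, and triviality of the former class in the larger group is exactly what ``$Y$ has no obstruction, i.e. $Y \isom \PP^1_\Q$'' means. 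The one-line summary is: $Y$ is a Severi--Brauer curve whose class in $\mathrm{Br}(\Q)[2]$ (equivalently in $H^1(\Gal(K/\Q),\PGL_2(K))$) is represented by $\zeta$, and such a curve is trivial (i.e. isomorphic to $\PP^1_\Q$) precisely when that cohomology class vanishes.
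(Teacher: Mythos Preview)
Your proof is correct and follows essentially the same approach as the paper: both directions are handled by the direct computation that a $\Q$-isomorphism $g\colon \PP^1_\Q \to Y$ exists if and only if $f\circ A^{-1}$ (equivalently $g = f\circ A$) is Galois-invariant for some $A\in\Aut(X_K)$, which is exactly the coboundary condition $\zeta(\sigma)=A^{-1}\sigma(A)$. Your added commentary on Severi--Brauer varieties is extra context but not part of the argument itself.
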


\begin{proof}

Assume that $Y$ is isomorphic to $\PP^1_{\Q}.$ Then there exists an isomorphism $h \colon X \to Y.$ Since, $h$ is defined over $\Q$ we have $\zeta(\sigma)=f^{-1} \circ \sigma(f)=(h^{-1}\circ f)^{-1}\circ \sigma(h^{-1} \circ f).$ Since, $h^{-1} \circ f \in \Aut(X_K)$ the cocycle $\zeta$ is a coboundary in $H^{1}(\Gal(K/\Q),\Aut(X_K)).$

Suppose $\zeta$ is a coboundary in $H^{1}(\Gal(K/\Q),\Aut(X_K))$ i.e., $\zeta(\sigma)=f^{-1}\circ \sigma(f)=A^{-1}\sigma(A)$ for some $A \in \Aut(X_K).$ Therefore, $f \circ A^{-1} \colon X \to Y$. Hence, $Y$ is isomorphic to $\PP^1_{\Q}.$ 
\end{proof}

\begin{remark}\label{rmk:application P^1}

In our application, given a nice curve $(X,\pi_X)$ where $X$ is $\PP^1_{\Q}$ we need to find $K$-twists $(Y,\pi_Y)$ such that $Y$ is $\PP^1_{\Q}.$ We do this as follows. We consider the cocycles from $\Gal(K/\Q)$ to $\Aut_K(X,\pi_X).$ There are finitely many if $\Gal(K/\Q)$ is finite. We check if a cocycle \[\zeta \colon \Gal(K/\Q) \to \Aut_K(X,\pi_X) \injects \Aut(X_K)=\PGL_2(K) \] is a coboundary in $H^{1}(\Gal(K/\Q),\Aut(X_K)).$ If $\zeta$ is a coboundary in $H^{1}(\Gal(K/\Q),\PGL_2(K))$, then there is an $A \in \PGL_2(K)$ such that  $\zeta(\sigma)=A^{-1}\sigma(A)$ for every $\sigma \in \Gal(K/\Q)$; $A$ is an isomorphism between $X_K \to \PP^1_{\Q}$ and using $\pi_X \circ \zeta(\sigma)=\pi_X$ for every $\sigma \in \Gal(K/\Q)$ we observe that $\pi_X \circ A^{-1} \colon \PP^1_{\Q} \to \PP^{1}_{\Q}.$ The pair $(\PP^1_{\Q},\pi_X \circ A^{-1})$ is twist of $(X,\pi_X).$
\end{remark}

\subsection{Determining if a cocycle is a coboundary}

Let $K$ be a finite Galois extension of $\Q$ and let \[\zeta \colon \Gal(K/\Q) \to \Aut(\PP^1_{K}) = \PGL_{2}(K)\] be a cocycle. The cocycle $\zeta$ determines a twist $C$ of $\PP^1_{\Q}$ up to isomorphism. In this section, we explain how to explicitly compute $C$ as a conic in $\PP^2_{\Q}.$ 
The Hasse principle can then be used to determine if $C(\Q)$ is empty or not. If $C(\Q)$ is nonempty, then it is isomorphic to $\PP^1_{\Q}$ and equivalently $\zeta$ is a coboundary.
When $\zeta$ is a coboundary, we will explain how to compute an $A \in \PGL_{2}(K)$ such that $\zeta(\sigma)=A^{-1}\sigma(A)$ for every $\sigma \in \Gal(K/\Q).$

Let $Q_0$ be the quadratic form $y^2-xz.$ Let $C_{0}$ be the conic defined by $Q_0=0$ in $\mathbb{P}^{2}_{\Q}$. 

Let $\phi \colon \PGL_2(K) \to \GL_3(K)$ be the map \[[(\begin{smallmatrix}
a & b \\
c & d
\end{smallmatrix})] \mapsto \frac{1}{ad-bc}\bigg(\begin{smallmatrix}
a^{2} & 2ab & b^{2} \\
ac & ad+bc & bd \\
c^{2} & 2cd & d^{2}    
\end{smallmatrix}\bigg).\] The map $\phi$ comes from remark 3.3 in \cite{MR3906177}. It is an injective group homomorphism that respects the $\Gal(K/\Q)$-action. The image of $\phi$ gives automorphisms of $\PP^2_K$ that stabilizes $(C_0)_K.$ The map $\phi$ induces an isomorphism $\Bar{\phi} : \Aut(\PP^{1}_{K})=\PGL_2(K) \to \Aut((C_0)_{K})$ that respects the $\Gal(K/\Q)$- action. One can check that $\Bar{\phi}$ is induced by the isomorphism $\PP^1_{\Q} \to C_0$ that sends $[x:y]$ to $[x^2:xy:y^2].$

Define the map \[\bar{\zeta}:=\phi \circ \zeta \colon \Gal(K/\Q) \to \GL_3(K).\] Since $\phi$ is a group homomorphism and respects the $\Gal(K/\Q)$-action, $\Bar{\zeta}: \Gal(K/\Q) \to \GL_3(K)$ is a cocycle. By Hilbert 90, there is an $M \in \GL_{3}(K)$ such that $\Bar{\zeta}(\sigma)=M^{-1}\sigma(M)$ for every $\sigma \in \Gal(K/\Q).$ We explain how to compute $M$ in \cref{Computing H90 mat}.

\begin{lemma}\label{twist as a conic}

The quadratic form $Q:=Q_{0}(M^{-1}(x,y,z)^{T})$ has coefficients in $\Q$ and the conic $C \subseteq \PP^2_{\Q}$ defined by $Q=0$ is isomorphic to the twist of $\PP^1_{\Q}$ by $\zeta.$
\end{lemma}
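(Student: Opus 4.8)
The plan is to verify two things: first, that the quadratic form $Q$ has rational coefficients; second, that the conic $C = \{Q=0\}$ is, as a twist of $\PP^1_{\Q}$, the one classified by the cocycle $\zeta$ under the bijection $\theta$ from the preceding proposition (applied with $X = \PP^1_{\Q}$, $\pi_X$ irrelevant here since we only track $\Aut(\PP^1_K) = \PGL_2(K)$, or rather we work with the conic model $C_0$ and its automorphisms). For the first point, I would compute $\sigma(Q)$ for $\sigma \in \Gal(K/\Q)$. Writing $Q(\mathbf{x}) = Q_0(M^{-1}\mathbf{x})$, we have $\sigma(Q)(\mathbf{x}) = Q_0\big(\sigma(M^{-1})\mathbf{x}\big)$ because $Q_0 = y^2 - xz$ already has coefficients in $\Q$ and is $\Gal(K/\Q)$-fixed. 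Now $\sigma(M^{-1}) = \sigma(M)^{-1} = \big(M \bar\zeta(\sigma)\big)^{-1} = \bar\zeta(\sigma)^{-1} M^{-1}$ from the Hilbert 90 relation $\bar\zeta(\sigma) = M^{-1}\sigma(M)$. Since $\bar\zeta(\sigma) = \phi(\zeta(\sigma))$ lies in the image of $\phi$, it stabilizes the quadratic form $Q_0$ (this is exactly the statement, recalled in the excerpt, that $\phi$ maps into automorphisms of $\PP^2_K$ stabilizing $(C_0)_K$; concretely $Q_0(\phi(g)^{-1}\mathbf{x}) = Q_0(\mathbf{x})$ up to a scalar, and up to the scalar is all that matters for the vanishing locus — I should be a little careful here and track that $\phi$ as normalized by the $1/(ad-bc)$ factor actually preserves $Q_0$ on the nose, or else pass to the conic rather than the form). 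Hence $\sigma(Q)(\mathbf{x}) = Q_0(\bar\zeta(\sigma)^{-1}M^{-1}\mathbf{x}) = Q_0(M^{-1}\mathbf{x}) = Q(\mathbf{x})$, so $Q$ is $\Gal(K/\Q)$-invariant and therefore defined over $\Q$; consequently $C \subseteq \PP^2_{\Q}$ is a conic over $\Q$.

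For the second point, I would exhibit an explicit $K$-isomorphism $(C_0)_K \to C_K$ and read off the associated cocycle. The matrix $M \in \GL_3(K)$ defines an automorphism of $\PP^2_K$; I claim $M^{-1}$ (viewed as a linear change of coordinates $\mathbf{x} \mapsto M^{-1}\mathbf{x}$, equivalently the projective map it induces) carries $C_0$ to $C$. Indeed, a point $\mathbf{x}$ lies on $C$ iff $Q(\mathbf{x}) = Q_0(M^{-1}\mathbf{x}) = 0$ iff $M^{-1}\mathbf{x} \in C_0$, so $M \colon C \to C_0$, i.e. $M^{-1} \colon C_0 \to C$, is an isomorphism over $K$. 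The cocycle attached to this twist by $\theta$ sends $\sigma$ to $(M^{-1})^{-1} \circ \sigma(M^{-1}) = M \circ \sigma(M)^{-1}$... wait — I need to be careful about left versus right and which direction the isomorphism points; the clean way is: if $g \colon (C_0)_K \to C_K$ is the fixed $K$-isomorphism (here $g = M^{-1}$ as a projective automorphism followed by the inclusion, thinking of $C$ and $C_0$ both inside $\PP^2$), then the cocycle representing $C$ in $H^1(\Gal(K/\Q), \Aut((C_0)_K))$ is $\sigma \mapsto g^{-1}\circ \sigma(g) = M \circ \sigma(M^{-1}) = M \cdot \bar\zeta(\sigma)^{-1} M^{-1} \cdot$ — hmm, this gives $M\sigma(M)^{-1}$, which is cohomologous to $\sigma(M)^{-1}M$... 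Let me instead just directly compute $g^{-1}\sigma(g) = (M^{-1})^{-1}\sigma(M^{-1}) = M \cdot \bar\zeta(\sigma)^{-1}M^{-1}$ using $\sigma(M^{-1}) = \bar\zeta(\sigma)^{-1}M^{-1}$; this is the $\GL_3$ element $M\bar\zeta(\sigma)^{-1}M^{-1}$, which as a cocycle into $\Aut((C_0)_K)$ is cohomologous (via conjugation by $M \in \Aut(\PP^2_K)$, which does not a priori preserve $C_0$, so I must be more careful — the honest statement is that $\sigma \mapsto \bar\zeta(\sigma)^{-1}$ and $\sigma \mapsto M\bar\zeta(\sigma)^{-1}M^{-1}$ need not be comparable in $H^1$ of $\Aut(C_0)$). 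The correct bookkeeping: transport the whole picture back to $\PP^1$ via $\bar\phi$. Under $\bar\phi \colon \PGL_2(K) \xrightarrow{\sim} \Aut((C_0)_K)$, the cocycle $\zeta$ corresponds to $\bar\zeta = \phi\circ\zeta$, and what I must show is that the conic $C$ constructed from $\bar\zeta$ via Hilbert 90 realizes the class $[\bar\zeta] \in H^1(\Gal(K/\Q), \Aut((C_0)_K))$ — and this is precisely the standard descent construction: $\bar\zeta(\sigma) = M^{-1}\sigma(M)$ means $M$ is an isomorphism of the descended variety (the form $Q$ over $\Q$, i.e. $C$) with $(C_0)_K$ that twists $\sigma$ by $\bar\zeta(\sigma)$. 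So the cocycle of $C$ is $[\bar\zeta] = [\phi\circ\zeta]$, which under the isomorphism $\bar\phi$ (respecting the $\Gal$-action) is the class $[\zeta] = \theta$ of the original twist of $\PP^1_{\Q}$. Therefore $C$ is isomorphic to the twist of $\PP^1_{\Q}$ by $\zeta$.

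I expect the main obstacle to be exactly the sign/direction conventions in the last paragraph: pinning down that $M$ (rather than $M^{-1}$, or their transposes) is the descent datum, that the normalization of $\phi$ by $1/(ad-bc)$ makes $\phi(g)$ preserve $Q_0$ literally and not just projectively, and that $\bar\phi$ being $\Gal$-equivariant really does transport $[\phi\circ\zeta]$ to $[\zeta]$ — all of which are routine but must be done consistently with the (right-action) conventions fixed earlier in the paper. The actual computation $\sigma(Q) = Q$ and the identification of the twisting cocycle are each one or two lines once the conventions are nailed down; everything else follows from Hilbert 90 (already invoked) and the properties of $\phi$ and $\bar\phi$ recorded just before the lemma.
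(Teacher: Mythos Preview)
Your argument for the rationality of $Q$ is exactly the paper's: $\bar\zeta(\sigma)=\phi(\zeta(\sigma))$ preserves $Q_0$, hence $\sigma(Q)=Q_0(\bar\zeta(\sigma)^{-1}M^{-1}\mathbf{x})=Q_0(M^{-1}\mathbf{x})=Q$. The paper's proof in fact stops there and does not spell out the second claim at all; your descent argument (that $M$ furnishes a $K$-isomorphism $(C_0)_K\cong C_K$ whose associated cocycle is $\bar\zeta$, and that transporting along the $\Gal$-equivariant isomorphism $\bar\phi\colon\PGL_2(K)\xrightarrow{\sim}\Aut((C_0)_K)$ identifies this class with $[\zeta]$) is the standard and correct way to fill that gap, and your worries about direction conventions are exactly the routine checks you flag --- once you fix that $\phi$ as normalized literally preserves $Q_0$ (it does: one verifies $Q_0(\phi(g)\mathbf{x})=Q_0(\mathbf{x})$ directly from the formula), the rest follows.
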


\begin{proof}
Since the cocycle $\Bar{\zeta}(\sigma)$ preserves the equation for $Q_{0}$ for every $\sigma \in \Gal(K/\Q)$, it follows that $Q$ has coefficients in $\Q$. 
\end{proof}

From \cref{twist as a conic}, $M$ gives a conic $C \subseteq \PP^2_{\Q}$ which is a twist of $C_0$ by $\zeta.$ The cocycle $\zeta$ is a coboundary if and only if $C$ has a rational point. We use \texttt{Magma} \cite{MR1484478} to check this. Suppose that the conic $C$ has a rational point. There is a change of variable matrix $B \in \GL_3(\Q)$ from $C$ to $C_{0}$. Observe that $BM \in \Aut_K(C_0)$ and $\bar{\zeta}(\sigma)=M^{-1}\sigma(M)=(BM)^{-1}\sigma(BM).$ Define $A:=(\bar{\phi})^{-1}(BM) \in \PGL_2(K).$ One can then check that $\zeta(\sigma)=A^{-1}\sigma(A)$ for every $\sigma \in \Gal(K/\Q).$

\subsection{Computing Hilbert 90 matrices}\label{Computing H90 mat}

Let $K$ be a finite Galois extension of $\Q$ and define $G:= \Gal(K/\Q)$. Hilbert 90 states that $H^{1}(G,\GL_{n}(K))=1.$ Equivalently, given a cocycle $\psi \colon G \to \GL_{n}(K)$ there exists a matrix $A\in \GL_{n}(K)$ such that $\psi(\sigma)=A^{-1}\sigma(A)$ for every $\sigma \in G.$ We now describe how to find $A$.

\begin{itemize}
    \item Define the $K$-vector space $V:=K^n.$ Define a new $G$-action on $V$ by $\sigma*v=\psi(\sigma)\sigma(v)$; it acts $\Q$-linearly. Let $W$ be the $\Q$-subspace of $V$ fixed by this $G$-action. Using Proposition $7 (A.V.63)$ of \cite{MR1994218}, the natural map $K \tensor_{\Q} W \to V$ of $K$-vector spaces is an isomorphism. In particular, $W$ has dimension $n$ over $\Q$. 
    
    \item Fix a basis $\frakA$ for $K$ considered as a vector space over $\Q$ and a basis $\frakB$ for $V$ considered as a vector space over $K$. The set $B=\{b.v~|~b \in \frakA, v \in \frakB\}$ is a basis of $V$ over $\Q.$
    
    \item The $\Q$-linear map $V\to W$ that sends $v$ to $1/|G|(\Sigma_{g \in G} g*v)$ is a projection. Therefore, the set $C =\{1/|G|(\Sigma_{g \in G} g*u) : u \in B\}$ spans $W$ over $\Q$.
    
    \item Choose $n$ linearly independent vectors from $C$; they form a basis for $W$ and hence also a basis of $V$ over $K.$ Let $A \in \GL_n(K)$ be the change of basis matrix from our basis in C to the standard basis of $K^n$. We have $\psi(\sigma)=A^{-1}\sigma(A)$ for all $\sigma \in G.$  
\end{itemize}

\section{A description of groups associated to abelian family of twists}\label{Horz group decsription}

Let $(X_G,\pi_G)$ be a modular curve. Let $\calA$ be a finite abelian subgroup of $\Aut(X_G,\pi_G).$ Given a cocycle $\gamma: \Gal(\Qab/\Q) \to \calA$ which is surjective we get a $\Qab$-twist $(X_{G_{\gamma}},\pi_{G_{\gamma}})$ of $(X_G,\pi_G).$ In this section we give a description of the group $G_{\gamma}.$ Let $\Gamma$ be $G \intersect \SL_2(\ZZ).$

The morphism $\pi_G \colon X_G \to \PP^1_{\Q}$ corresponds to the inclusion of fields $\Q(j) \subseteq \Q(X_G).$ The group $\calA$ acts on the function field $\Q(X_G)$ on the right and fixes $\Q(j).$ Let $\Q(X_G)^{\calA}$ be the subfield of $\Q(X_G)$ fixed by $\calA$ and $X_G^{\calA}$ be the nice curve over $\Q$ whose function field is $\Q(X_G)^{\calA}.$ The inclusion of fields $\Q(j) \subseteq \Q(X_G)^{\calA} \subseteq \Q(X_G)$ corresponds to morphisms $\pi \colon X_G \to X_G^{\calA}$ and $\pi^{'}_{G} \colon X_G^{\calA} \to \PP^1_{\Q}$ satisfying $\pi^{'}_{G} \circ \pi=\pi_G.$ Then, $(X_G^{\calA},\pi^{'}_{G})$ is a modular curve $(X_{G_0},\pi_{G_0})$ for an open subgroup $G_0$ of $\GL_2(\Zhat)$ satisfying $-I \in G_0$ and $\det(G_0)=\Zhat^{\times}.$  

The extension $ \Q(X_G)^{\calA} \subseteq \Q(X_G)$ is Galois with $\calA^{op}$ as Galois group. So, $\Aut(X_G,\pi)=\calA.$ Since, $\pi$ is a Galois cover $G$ is a normal subgroup of $G_0$ and we have a natural isomorphism $G_0/G \isom \calA.$ We also observe that $\pi_{G_{\gamma}}$ factors through $X_{G_0}.$ We will give a description of the group $G_{\gamma}$ in terms of $G_0$ and $\gamma.$

For $\sigma \in \Gal(\Q^{cyc}/\Q)$ and $f \in \Q^{cyc}(X_G)$, let $\sigma(f)$ denote the action of $\Gal(\Q^{cyc}/\Q)$ on $\Q^{cyc}(X_G)$ via the field $\Q^{cyc}$, i.e., for every $f \in \Q(X_G)$ and for every $\sigma \in \Gal(\Q^{cyc}/\Q)$ we have $\sigma(f)=f.$ Let us define a new action of $\Gal(\Q^{cyc}/\Q)$ on $\Q^{cyc}(X_G)$ as follows. For $\sigma \in \Gal(\Q^{cyc}/\Q)$ and $f \in \Q^{cyc}(X_G)$, let $\sigma \bullet f=\sigma(f) \circ \gamma_{\sigma}^{-1}.$
 
Then, \[\Q(X_{G_{\gamma}})=\{f \in \Q^{cyc}(X_G)| \sigma \bullet f=f~\forall \sigma \in \Gal(\Q^{cyc}/\Q)\}.\] 

Since the cyclotomic character $\chi_{cyc} \colon \Gal(\Qab/\Q) \to \Zhat^{\times}$ is an isomorphism, there is an unique homomorphism $\phi_{\gamma}$ that makes the following diagram commutative.

\begin{displaymath}
  \xymatrix{
    {\Gal(\Qab/\Q)} \ar[rr]^{\chi_{cyc}} \ar[dr]_{\gamma^{-1}}
    && {\Zhat^{\times}} \ar[dl]^{\phi_{\gamma}}\\
    & {G_0/G}}
\end{displaymath}

Define \[S:=\{g \in G_0 |gG=\phi_{\gamma}(\det g)\}.\]   

The action $\bullet$ of $\Gal(\Q^{cyc}/\Q)$ and $*$ of $\G_{\gamma}/\Gamma$ on $\Q^{\cyc}(X_G)$ are compatible with respect to the isomorphism $\Gal(\Q^{cyc}/\Q) \xrightarrow[\chi_{cyc}]{\simeq} \Zhat^{\times} \xleftarrow[\det]{\simeq} G_{\gamma}/\Gamma.$ Explicitly, if $\chi_{\cyc}(\sigma)=\det(g)$ with $\sigma \in \Gal(\Q^{cyc}/\Q)$ and $g \in G_{\gamma}$, then $\sigma \bullet f=f*g$ for all $f \in \Q^{cyc}(X_G).$ 

\begin{lemma} \label{G_{gamma}}

The group $G_{\gamma}=S.$
\end{lemma}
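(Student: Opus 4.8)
The plan is to show the two inclusions $S \subseteq G_\gamma$ and $G_\gamma \subseteq S$ by exploiting the description of $\Q(X_{G_\gamma})$ as the fixed field of the $\bullet$-action together with the compatibility of $\bullet$ with the $*$-action of $G_\gamma/\Gamma$. First I would unwind the definitions: recall that $G_\gamma$ is an open subgroup of $\GL_2(\Zhat)$ containing $\Gamma = G \cap \SL_2(\ZZ)$ with full determinant, and its reduction modulo the relevant level acts on $\Q^{\cyc}(X_G)$ via $*$; by the stated compatibility, for $\sigma \in \Gal(\Q^{\cyc}/\Q)$ and $g \in G_\gamma$ with $\chi_{\cyc}(\sigma) = \det(g)$ we have $\sigma \bullet f = f * g$ for all $f$. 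The key structural facts I would use are that $\Q(X_{G_\gamma})$ is exactly the $\bullet$-invariants, that $\Q(X_{G_\gamma})$ is also exactly the $*$-invariants of $G_\gamma$ acting on $\Q^{\cyc}(X_G)$ (this is what it means for $X_{G_\gamma}$ to be the modular curve attached to $G_\gamma$, combined with \cref{action on F_N}\ref{algclosure}), and that $G_0$ is the stabilizer giving $\Q(X_{G_0}) = \Q(X_G)^{\calA}$ with $G_0/G \isom \calA$.

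For the inclusion $S \subseteq G_\gamma$: take $g \in S$, so $g \in G_0$ and $gG = \phi_\gamma(\det g)$ in $G_0/G \isom \calA$. Pick $\sigma \in \Gal(\Q^{\cyc}/\Q)$ with $\chi_{\cyc}(\sigma) = \det g$; by the commuting triangle defining $\phi_\gamma$, we have $\phi_\gamma(\det g) = \gamma_\sigma^{-1} \cdot G$, i.e. the image of $g$ in $\calA$ equals $\gamma_\sigma^{-1}$ (viewing $\calA = G_0/G$ and $\gamma_\sigma \in \calA$). Then for any $f \in \Q(X_G)$, since $\sigma$ fixes $\Q(X_G)$ pointwise via the cyclotomic action, $\sigma \bullet f = f \circ \gamma_\sigma^{-1} = f * \bar g$ where $\bar g$ denotes the image of $g$ in $\calA = G_0/G$ acting through the cover $\pi$; more precisely I would check that the $*$-action of $g$ on $\Q(X_G)$ coincides with the action of $\gamma_\sigma^{-1} \in \calA = \Aut(X_G, \pi)$, which is exactly $f \mapsto f \circ \gamma_\sigma^{-1}$. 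Hence $g$ preserves $\Q(X_{G_\gamma})$ under $*$ (one checks $\sigma \bullet (f * g) = f * g$ reduces, via compatibility, to the cocycle relation for $\gamma$ applied to elements of $\Gal(\Q^{\cyc}/\Q)$), so $g \in G_\gamma$.

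For the reverse inclusion $G_\gamma \subseteq S$: first I would argue $G_\gamma \subseteq G_0$. Indeed $\pi_{G_\gamma}$ factors through $X_{G_0}$ (as noted in the text), which forces $\Q(X_{G_0}) \subseteq \Q(X_{G_\gamma})$, hence $G_\gamma$ reduces into $G_0$ modulo the level of $G_0$; combined with $\Gamma \subseteq G_\gamma$ and the fact that $G_\gamma \cap \SL_2 = \Gamma = G_0 \cap \SL_2 \cap (\text{that level's worth})$, one gets $G_\gamma \subseteq G_0$. Then for $g \in G_\gamma$, choose $\sigma$ with $\chi_{\cyc}(\sigma) = \det g$; for every $f \in \Q(X_{G_\gamma})$ we have $f * g = f$ and $\sigma \bullet f = f$, and by compatibility these are the same statement, but I want to pin down the coset $gG \in G_0/G$. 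Restricting the $*$-action to $\Q(X_G)$: the stabilizer of $\Q(X_G)$ in $G_0$ is $G$ (since $\Q(X_G)^{\calA} = \Q(X_{G_0})$ and $G_0/G$ acts faithfully), so the coset $gG$ is determined by how $g$ acts on $\Q(X_G)$ via $*$, which by the computation above is $f \mapsto \sigma \bullet f = f \circ \gamma_\sigma^{-1}$; comparing with how $\phi_\gamma(\det g) = \gamma_\sigma^{-1}$ acts, we conclude $gG = \phi_\gamma(\det g)$, i.e. $g \in S$.

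The main obstacle I expect is the bookkeeping around levels and the transpose/opposite-group conventions: making the compatibility statement "$\sigma \bullet f = f * g$ when $\chi_{\cyc}(\sigma) = \det g$" fully rigorous requires choosing a common level $M$ on which everything ($G$, $G_0$, $G_\gamma$, the $\calA$-action) is defined and tracking that the right $*$-action of $\GL_2(\ZZ/M\ZZ)$ on $\calF_M$ interacts correctly with the cyclotomic action $\sigma_d$ on coefficients — this is where the $\phi_\gamma$ triangle and the $\gamma^{-1}$ (rather than $\gamma$) enter, and it is easy to get an inverse or an opposite wrong. The other slightly delicate point is verifying $\Q(X_{G_\gamma})$ equals both fixed fields simultaneously and that $G_\gamma$ genuinely lands in $G_0$; once those are in place, both inclusions are short.
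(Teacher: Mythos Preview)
Your approach is essentially the same as the paper's: both inclusions come from the compatibility $\sigma \bullet f = f*g$ when $\chi_{\cyc}(\sigma)=\det g$ and $gG=\gamma_\sigma^{-1}$, applied to $f \in \Q(X_G)$ for $G_\gamma \subseteq S$ and to $f \in \Q(X_{G_\gamma})$ for $S \subseteq G_\gamma$. Your $S \subseteq G_\gamma$ direction is more circuitous than necessary: instead of showing that $f*g$ is again $\bullet$-invariant, the paper simply observes that for $f \in \Q(X_{G_\gamma})$ one has $f*g = \sigma \bullet f = f$ directly (the last equality being the defining property of $\Q(X_{G_\gamma})$), which immediately yields $g \in G_\gamma$. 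Likewise, your separate verification that $G_\gamma \subseteq G_0$ is unnecessary in the paper's treatment, since the identity $f*g = f\circ\gamma_\sigma^{-1}$ for all $f \in \Q(X_G)$ already exhibits $g$ acting through $\calA = G_0/G$ and hence lying in $G_0$.
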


\begin{proof}

Let us show that $G_{\gamma} \subseteq S.$ 
Let $g \in G_{\gamma}.$ Let $\sigma \in \Gal(\Q^{cyc}/\Q)$ satisfy $\chi_{\cyc}(\sigma)=\det(g).$ For every $f \in \Q(X_G)$, $f*g=\sigma \bullet f = \sigma(f) \circ \gamma_{\sigma}^{-1}=f \circ \gamma_{\sigma}^{-1}.$ Therefore, $gG= \phi_{\gamma}(\det g).$

We will now show that $S \subseteq G_{\gamma}.$
Let $g \in S$. Let $\sigma \in \Gal(\Q^{cyc}/\Q)$ satisfy $\chi_{\cyc}(\sigma)=\det(g)$, i.e., $gG=\gamma_{\sigma}^{-1}.$ For any $f \in \Q(X_{G_{\gamma}})$ we have $f=\sigma(f) \circ \gamma_{\sigma}^{-1}=f*g.$ Therefore, $g \in G_{\gamma}.$
\end{proof}

\section{Abelian Families of Twists}\label{sec:families of twists}

Fix a finite abelian subgroup $\calA \subseteq \PGL_2(\Q)=\Aut(\PP^1_{\Q}).$ We choose a function $\pi(t) \in \Q(t)$ satisfying $\Q(t)^{\calA}=\Q(\pi(t)).$ The inclusion $\Q(\pi(t)) \subseteq \Q(t)$ gives a morphism $\pi : \PP^1_{\Q} \to \PP^1_{\Q}$ satisfying $\Aut_{\Qbar}(\PP^1_{\Q},\pi)=\calA.$

Fix a variable $v$ and let $x$ be a root of the polynomial $\pi(T)-v \in \Q(v)[T].$ Let $K:=\Q(x).$ The field extension $\Q(v) \subseteq K$ is Galois. 

For each $\sigma \in \Gal(K/\Q(v))$, there is a unique $\gamma_{\sigma} \in \calA$ satisfying \[\sigma(x)=\gamma_{\sigma}(x).\] It can be checked that $\gamma : \Gal(K/\Q(v)) \stackrel{\sim}{\to} \calA \subseteq \PGL_2(\Q) \subseteq \PGL_2(K)$ is a homomorphism. 

In the cases we consider, we find a matrix $A \in \GL_2(K)$ such that $\gamma_{\sigma}=A^{-1} \sigma(A)$ for every $\sigma \in \Gal(K/\Q(v)).$ This matrix $A$ describes an isomorphism $f:\PP^1_K \to \PP^1_K$ such that $\pi \circ f^{-1}$ is stable under $\Gal(K/\Q(v))$-action. Define $\pi_{\calA,v} := \pi \circ f^{-1} \in \Q(v)(u).$ 

\subsection{Specialization}

Here $v$ is a variable that can be specialized at rational values. More precisely, we will take $v \in \Q \subseteq \Q \union \{\infty\} \subseteq \PP^1(\Q)$ for which $\pi$ is unramified at $v.$ The cocycle $\gamma$ gives a $K$-twist of $(\PP^1_{\Q},\pi)$ that has a rational point. Moreover, any $\Qab$-twist of $(\PP^1_{\Q},\pi)$ that has a rational point arises from such a $v \in \Q.$

Given $v \in \Q$, we find the corresponding twist $(\PP^1_{\Q},\pi_{\calA,v})$ where we can take $\pi_{\calA,v} \in \Q(u).$

We are going to limit our discussion to those subgroups $\calA$ that show up when $(\PP^1_{\Q},\pi)$ is a modular curve. As a remark, we point out that there are finite abelian subgroups of $\PGL_2(\Q)$ which do not show up in our context for example, the cyclic group of order $6.$ Please see \cite{MR2681719} for more details on classification of finite subgroups of $\PGL_2(\Q).$

For each $\calA$ we give the function $\pi$, a matrix $A$ and resulting $\pi_{\calA,v}.$

\begin{itemize}

\item $\mathbf{Case\ 0}$ For $\calA =\{(\begin{smallmatrix}
1 & 0 \\ 
0  & 1
\end{smallmatrix})\}$, the function $\pi(T)=T.$ A matrix $A=(\begin{smallmatrix}
1 & 0 \\ 
0  & 1
\end{smallmatrix}) $ and $\pi_{\calA,v}(u)=u.$

\item $\mathbf{Case\ 1}$ For $\calA =\{(\begin{smallmatrix}
1 & 0 \\ 
0  & 1
\end{smallmatrix}),(\begin{smallmatrix}
-1 & 0 \\ 
0  & 1
\end{smallmatrix})\}$, the function $\pi(T)=T^2.$ 
A matrix $A=(\begin{smallmatrix}
1 & 0 \\ 
0  & x
\end{smallmatrix}) $ and $\pi_{\calA,v}(u)=vu^2.$

\item $\mathbf{Case \ 2}$ For $\calA =\{(\begin{smallmatrix}
1 & 0 \\ 
0  & 1
\end{smallmatrix}),(\begin{smallmatrix}
0 & \alpha \\ 
1  & 0
\end{smallmatrix})\}$, where $\alpha$ is a non-zero rational number the function $\pi(T)= T+\alpha/T.$ 
A matrix $A=(\begin{smallmatrix}
x & \alpha \\ 
1  & x
\end{smallmatrix}) $ and $\pi_{\calA,v}(u)=(vu^{2}-4\alpha u+\alpha v)/(-u^{2}+vu-\alpha).$

\item $\mathbf{Case \ 3}$ For $\calA =\{(\begin{smallmatrix}
1 & 0 \\ 
0  & 1
\end{smallmatrix}),(\begin{smallmatrix}
0 & -1 \\ 
1  & -1
\end{smallmatrix}),(\begin{smallmatrix}
1 & -1 \\ 
1  & 0
\end{smallmatrix})\},$ the function $\pi(T)=(T^3-3T+1)/(T^2-T).$ 
A matrix \[A=\Big(\begin{smallmatrix}
(-6v+3)x^{2} + (6v^{2}-6v+6)x +
    (-3v^{2}+12v-6)  & ~~~(3v-6)x^{2} +
    (-3v^{2}+3v-3)x + (3v^{2}-9v+ 12)\\ 
6x^{2} + (-6v + 3)x + (3v - 12)  & -3x^{2} + (3v + 3)x +
    (-3v + 6)
\end{smallmatrix}\Big)\] and
$\pi_{\calA,v}(u)=((-v + 3)u^{3} + (-3v^{2} + 9v - 9)u^{2} + (-3v^{3} + 9v^{2} - 15v)u + (-v^{4} + 3v^{3} - 6v^{2} - v + 3))/(u^{3} + 2vu^{2} + (v^{2} + v - 3)u + (v^{2} - 3v + 1)).$

\item $\mathbf{Case \ 4}$ For $\calA =\{(\begin{smallmatrix}
1 & 0 \\ 
0  & 1
\end{smallmatrix}),(\begin{smallmatrix}
0 & -1 \\ 
1  & 0
\end{smallmatrix}),(\begin{smallmatrix}
-1 & -1 \\ 
1  & -1
\end{smallmatrix}),(\begin{smallmatrix}
1 & -1 \\ 
1  & 1
\end{smallmatrix})\},$ the function \\ $\pi(T)=(T^{4}-6T^{2}+1)/(T^3-T).$ 
 
A matrix \[A=\Big(\begin{smallmatrix}
(-2v + 24)x^{3} + (2v^{2} - 22v)x^{2} + (-2v^{2} +
    10v - 136)x + (10v + 16)
  & (4v - 8)x^{3} + (-4v^{2} + 4v)x^{2} + (4v^{2} - 20v
    + 72)x - 32\\ 
8x^{3} - (8v)x^{2} - 40x + 4v
  & (2v - 8)x^{3} + (-2v^{2} + 6v)x^{2} + (2v^{2} - 10v
    + 56)x + (-2v - 16)
\end{smallmatrix}\Big)\] and $\pi_{\calA,v}(u)=(-vu^{4} + (8v + 16)u^{3} + (-18v - 96)u^{2} + (8v + 176)u + (7v - 96))/(u^{4} +(v - 8)u^{3} + (-6v + 18)u^{2} + (11v - 8)u + (-6v - 7)).$

\item $\mathbf{Case \ 5}$ For $\calA=\{(\begin{smallmatrix}
1 & 0 \\ 
0  & 1
\end{smallmatrix}),(\begin{smallmatrix}
0 & \alpha \\ 
1  & 0
\end{smallmatrix}),(\begin{smallmatrix}
-1 & 0 \\ 
0  & 1
\end{smallmatrix}),(\begin{smallmatrix}
0 & -\alpha \\ 
1  & 0
\end{smallmatrix})\},$ where $\alpha$ is a non-zero rational number the function $\pi(T) = T^{2}+\alpha^2/T^2.$ 
 
A matrix $A=(\begin{smallmatrix}
1
  & x\\ 
x^2
  & \alpha^2/x
\end{smallmatrix})$ and $\pi_{\calA,v}(u)=((-3v\alpha^2 + v^3)\alpha^2u^4 + (8\alpha^2 - 4v^2)\alpha^2u^3 + 6v\alpha^2u^2 - 8\alpha^2u +v)/(\alpha^4u^4 - 2v\alpha^2u^3 + (2\alpha^2 + v^2)u^2 - 2vu + 1).$

\item $\mathbf{Case \ 6}$ For $\calA=\{(\begin{smallmatrix}
1 & 0 \\ 
0  & 1
\end{smallmatrix}),(\begin{smallmatrix}
0 & -1 \\ 
1  & 0
\end{smallmatrix}),(\begin{smallmatrix}
1 & 1 \\ 
1  & -1
\end{smallmatrix}),(\begin{smallmatrix}
-1 & 1 \\ 
1  & 1
\end{smallmatrix})\},$ the rational function \\
$\pi(T)=(T^4 + 2T^2 + 1)/(T^3 - T).$ 
 
A matrix $A=\Big(\begin{smallmatrix}
a_1  & a_2\\ 
a_3  & a_4 \end{smallmatrix}\Big)$, where
    $a_1=\sfrac{8x^3}{(v^2-16)} - \sfrac{8vx^2}{(v^2-16)} + \sfrac{24x}{(v^2-16)} + \sfrac{4v}{(v^2-16)},$\\ $a_2=\sfrac{-2x^3}{(v-4)} + \sfrac{(2v^2 + 10v)x^2}{(v^2 - 16)} + \sfrac{(-2v^2 - 6v - 8)x}{(v^2-16)} + \sfrac{-6v-16}{(v^2-16)},$\\ $a_3=\sfrac{(-2v + 16)x^3}{(v^2-16)} + \sfrac{(2v^3 - 14v^2 - 32)x^2}{(v^3 - 16v)} +
    \sfrac{(-2v^2 - 6v + 80)x}{(v^2-16)} + \sfrac{(10v^2 - 16v - 32)}{(v^3-16v)}$ and \\ $a_4=\sfrac{(-4v + 8)x^3}{(v^2-4v)} + \sfrac{(4v^2 + 12v - 40)x^2}{(v^2-16)} +
    \sfrac{(-4v^3 - 4v^2 + 8v + 32)x}{(v^3-16v)} + \sfrac{-8v-8}{(v^2-16)}$ and $\pi_{\calA,v}(u)$ is \[\dfrac{\small{(-25v^3 + 160v^2 - 256v)u^4 + (40v^3 - 208v^2 + 256v)u^3+ (-26v^3 + 96v^2 - 64v)u^2+(8v^3 - 16v^2)u-v^3}}{\small{(6v^3 - 37v^2 + 64v - 64)u^4+(-11v^3 + 56v^2 - 32v)u^3+ (6v^3 - 30v^2)u^2+(-v^3 +8v^2)u-v^2}}.\]

\item $\mathbf{Case \ 7}$ For $\calA=\{(\begin{smallmatrix}
1 & 0 \\ 
0  & 1
\end{smallmatrix}),(\begin{smallmatrix}
0 & -5 \\ 
1  & 0
\end{smallmatrix}),(\begin{smallmatrix}
-1 & 5 \\ 
1  & 1
\end{smallmatrix}),(\begin{smallmatrix}
5 & 5 \\ 
1  & -5
\end{smallmatrix})\},$ the function \\
$\pi(T)=(T^4 + 10T^2 + 25)/(T^3 - 4T^2- 5T).$ 
 
A matrix $A=\Big(\begin{smallmatrix}
a_1  & a_2\\ 
a_3  & a_4 \end{smallmatrix}\Big)$, where
 $a_1=\sfrac{(22v^2/15 - 84v/5 - 224/3)x^3}{(v^3 - 36v^2 + 240v + 1600)}\\ + \sfrac{(-22v^3/15
    + 242v^2/15 + 248v/3 + 80/3)x^2}{(v^3 - 36v^2 + 240v + 1600)}\\ + \sfrac{(98v^3/15 - 898v^2/15 - 1516v/3 - 2080/3)x}{(v^3 - 36v^2 + 240v + 1600)} + \sfrac{(4v^3 - 142v^2/3 - 200v + 400/3)}{(v^3 - 36v^2 + 240v + 1600)}$, 
    $a_2=\sfrac{(2v^2/3 - 8v - 80/3)x^3}{(v^3 - 36v^2 + 240v + 1600)} + \sfrac{ (-2v^3/3 +
    22v^2/3 + 92v/3 + 320/3)x^2}{(v^3 - 36v^2 + 240v + 1600)} + \sfrac{(10v^3/3
    - 98v^2/3 - 760v/3 - 400/3)x}{(v^3 - 36v^2 + 240v + 1600)} + \sfrac{ (50v^2/3 - 60v - 3200/3)}{(v^3 - 36v^2 + 240v + 1600)},$\\
    $a_3=\sfrac{(-22v^2/15 + 152v/15 + 32)x^3}{(v^4 - 36v^3 + 240v^2 + 1600v)} + \sfrac{(22v^2/15 - 142v/15 - 112/3)x^2}{(v^3 - 36v^2 + 240v + 1600)} +
    \sfrac{(-98v^3/15 + 458v^2/15 + 728v/3 + 160)x}{(v^4 - 36v^3 + 240v^2 +
    1600v)} + \sfrac{ (-4v^2 + 82v/3 + 400/3)}{(v^3 - 36v^2 + 240v + 1600)}$ and \\ 
    $a_4=\sfrac{(-2v/3 + 16/3)x^3}{(v^3 - 36v^2 + 240v + 1600)} + \sfrac{(2v^3/3 - 14v^2/3 +
    8v/3 - 160)x^2}{(v^4 - 36v^3 + 240v^2 + 1600v)} \\
    +\sfrac{(-10v^2/3 + 46v/3+ 560/3)x}{(v^3 - 36v^2 + 240v + 1600)} + \sfrac{(10v^2/3 - 1400v/3 -
    800)}{(v^4 - 36v^3 + 240v^2 + 1600v)}$ 
    
    and $\pi_{\calA,v}(u)$ is $((-v^5 + 32v^4 - 336v^3 + 1280v^2 - 1600v)u^4 + (-4v^6 + 148v^5 - 1744v^4 + 5600v^3 + 16000v^2 -64000v)u^3+ (-6v^7 + 252v^6- 3376v^5 + 10880v^4 + 69600v^3 - 256000v^2 - 640000v)u^2+(-4v^8 + 188v^7 - 2864v^6 + 10680v^5 +82400v^4 - 368000v^3 - 1280000v^2)u+(-v^9 + 52v^8 - 896v^7 + 4120v^6 + 29500v^5 - 176000v^4 -640000v^3)
    )/((v^4 - 36v^3 + 240v^2 + 1600v - 14400)u^4+(3v^5 - 128v^4 + 1320v^3 + 1920v^2 - 51200v)u^3+ (3v^6 - 149v^5 + 2056v^4 - 3600v^3 -63200v^2 + 64000v)u^2+ (v^7
    - 58v^6 + 1002v^5 - 4400v^4 - 20800v^3 + 96000v^2)u+(-v^7 + 26v^6 - 280v^5 + 3500v^4 -
    16000v^3 - 160000v^2)).$

\item $\mathbf{Case \ 8}$ For $\calA=\{(\begin{smallmatrix}
1 & 0 \\ 
0  & 1
\end{smallmatrix}),(\begin{smallmatrix}
-2 & 2 \\ 
1  & 2
\end{smallmatrix}),(\begin{smallmatrix}
1 & 2 \\ 
1  & -1
\end{smallmatrix}),(\begin{smallmatrix}
0 & -2 \\ 
1  & 0
\end{smallmatrix})\},$ the rational function $\pi(T)=(T^4 + 4T^2 + 4)/(T^3 +T^2- 2T).$ 
 
A matrix $A=\Big(\begin{smallmatrix}
a_1  & a_2\\ 
a_3  & a_4 \end{smallmatrix}\Big)$,
where
    $a_1=\sfrac{(-v^3/9 + 10v^2/9 + 28v/9 - 64/9)x^3}{(v^3 + 4v^2 - 32v)}\\
    + \sfrac{ (v^4/9 -
    14v^3/9 - 14v^2/3 + 104v/9 + 32/9)x^2}{(v^3 + 4v^2 - 32v)} + \sfrac{(5v^4/9 +
    2v^3/9 - 16v^2/3 + 88v/9 - 128/9)x}{(v^3 + 4v^2 - 32v)} + \sfrac{(28v^3/9 +
    44v^2/9 - 352v/9 + 64/9)}{(v^3 + 4v^2 - 32v)},$\\ 
    $a_2=\sfrac{(4v^3/9 + 14v^2/9 - 40v/9 - 32/9)x^3}{(v^3 + 4v^2 - 32v)} + \sfrac{ (-4v^4/9 - 16v^3/9 + 20v^2/3 + 88v/9 - 128/9)x^2}{(v^3 + 4v^2 - 32v)} + \sfrac{(-2v^3/9 +
    2v^2/9 + 92v/9 + 16/9)x}{(v^2 +8v)} + \sfrac{(-4v^3/9 + 40v^2/9 + 112v/9 - 256/9)}{(v^3 + 4v^2 - 32v)},$\\
    $a_3=\sfrac{(2v/9 - 32/9)x^3}{(v^3 + 4v^2 - 32v)} + \sfrac{(-2v^2/9 + 40v/9 + 16/9)x^2}{(v^3 + 4v^2 - 32v)} +
    \sfrac{(-10v^2/9 + 20v/9 - 64/9)x}{(v^3 + 4v^2 - 32v)} + \sfrac{  (-56v/9 + 32/9)}{(v^3 + 4v^2 - 32v)}$ and \\ 
    $a_4=\sfrac{(-8v/9 - 16/9)x^3}{(v^3 + 4v^2 - 32v)} + \sfrac{(8v^2/9 + 20v/9 - 64/9)x^2}{(v^3 + 4v^2 - 32v)} +
    \sfrac{(4v/9 + 8/9)x}{(v^2 +8v)} + \sfrac{(8v/9 - 128/9)}{(v^3 + 4v^2 - 32v)}$ 
    
   and $\pi_{\calA,v}(u)$ is $(vu^4 + (2v^3 + 4v^2 - 24v)u^3 + (3v^5/2 + 6v^4 - 29v^3 - 68v^2 +
    184v)u^2 + (v^7/2 + 3v^6 - 11v^5 - 62v^4 + 108v^3 + 320v^2 -
    480v)u + (v^9/16 + v^8/2 - 5v^7/4 - 14v^6 + 41v^5/4 + 130v^4 -
    80v^3 - 400v^2 + 400v))/(u^4 + (3v^2/2 + 2v - 8)u^3 + (3v^4/4 +
    7v^3/4 - 5v^2 - 8v + 24)u^2 + (v^6/8 + v^5/4 + v^4/4 + 3v^3 -
    10v^2 + 8v - 32)u + (-v^7/16 + 3v^6/8 + 7v^5/2 - 29v^4/4 - 31v^3 + 64v^2 + 16)).$
\end{itemize}

\section{Proof of \cref{THM:FIRST}}\label{proof 1}

Take a genus $0$ congruence subgroup $\Gamma$ of $\SL_2(\ZZ)$ that contains $-I.$ Let $N$ be the level of $\Gamma.$ In this section, we will prove that there are nonconstant $\pi_1,\ldots,\pi_l \in \Q(t)$ and finite abelian subgroups $\calA_i \subseteq \Aut(\PP^1_{\Q},\pi_i) \subseteq \PGL_2(\Q)$ such that the following hold:
\begin{romanenum}
    
    \item For each $i \in \{1,\ldots,l\}$, $(\PP^1_{\Q},\pi_i)$ is isomorphic to some modular curve $(X_G,\pi_G)$ where $G$ is an open subgroup of $\GL_2(\Zhat)$ satisfying $-I \in G$, $\det(G)=\Zhat^{\times}$ and $G \intersect \SL_2(\ZZ)=\Gamma.$
    
    \item For any modular curve $X_G$ isomorphic to $\PP^1_{\Q}$ where $-I \in G$, $\det(G)=\Zhat^{\times}$ and $G \intersect \SL_2(\ZZ)=\Gamma$, there is an $i \in \{1,\ldots,l\}$, such that $(X_G,\pi_G)$ is a $\Qab$-twist of $(\PP^1_{\Q},\pi_i)$ via a cocycle $\gamma \colon \Gal(\Qab/\Q) \to \calA_i \subseteq \Aut(\PP^1_{\Q},\pi_i).$   
\end{romanenum}   
It suffices to prove \cref{THM:FIRST} for a fixed $\Gamma$ because there are only finitely many genus $0$ congruence subgroups up to conjugation in $\GL_2(\ZZ).$

\subsection{Computing a pair $(\PP^1_{\Q},\pi_1)$}\label{computing a pair}

In this section, we will discuss our computation of a pair $(\PP^1_{\Q},\pi_1)$ that is isomorphic to a modular curve $(X_{G_1},\pi_{G_1})$ for some open subgroup $G_1$ of $\GL_2(\Zhat)$ satisfying $-I \in G_1$, $\det(G_1)=\Zhat^{\times}$ and $G_1 \intersect \SL_2(\ZZ)=\Gamma.$ We either find one such pair $(\PP^1_{\Q},\pi_1)$ or show that there does not exist any.

\begin{remark}
The point of computing such a pair is that to compute all the other modular curves $(X_G,\pi_G)$ such that $G \intersect \SL_2(\ZZ)$ is $\Gamma$ and the curve $X_{G}$ is isomorphic to $\PP^1_{\Q}$ we have to consider the cocycles $\phi \colon \Gal(\Qab/\Q) \to \Aut_{\Qab}(X_{G_1},\pi_{G_1}).$
\end{remark}

Let $h$ be the normalized hauptmodul of $\Gamma$. Since the coefficients of $q$-expansion of $h$ lie in $K_N$, the function field of $X_{\Gamma}$ is $K_N(h).$ There is a unique $\pi_{\Gamma}$ in $K_N(t)$ such that $\pi_{\Gamma}(h)=j$ where $j$ is the modular $j$-invariant. To compute $\pi_{\Gamma}$ we use the method described in section $4.4$ of \cite{MR3671434}; this uses that from \cref{sec:Haupt}, we can compute arbitrarily many terms of the $q$-expansion of $h.$ The function $\pi_{\Gamma}$ describes a morphism $\pi_{\Gamma} \colon X_{\Gamma} \to \mathbb{P}^1_{K_N}.$ 

Suppose there is an open subgroup $G$ of $\GL_2(\Zhat)$ containing $-I$ with full determinant for which $G \intersect \SL_2(\ZZ)$ is $\Gamma.$ Let $M$ be the level of $G.$ We know that $N$ divides $M.$ There is an isomorphism $f \colon (X_{\Gamma})_{K_M} \to (X_{G})_{K_M}$ satisfying $\pi_{G} \circ f =\pi_{\Gamma}.$ We observe that $\Q(h)$ is a function field for a model of $X_{\Gamma}$, isomorphic to $\mathbb{P}^{1}_{\Q}.$ Since we have models for $X_{\Gamma}$ and $X_G$ over $\Q$ there is a natural action of $\Gal(K_M/\Q)$ on $f.$ Using that $\pi_G$ is defined over $\Q$ we see that $f$ satisfies the following condition
\begin{equation}\label{equ:coc}
    \sigma(\pi_{\Gamma})=\pi_{\Gamma} \circ f^{-1}\circ \sigma(f)
\end{equation}
for every $\sigma \in \Gal(K_M/\Q).$

\cref{equ:coc} gives us a condition on the image of the cocycle \[\zeta \colon \Gal(K_M/\Q) \to \Aut((X_{\Gamma})_{K_M})=\PGL_2(K_M)\] that sends $\sigma$ to $f^{-1}\circ \sigma(f).$ 

For a fixed $M$ divisible by $N$, there are only finitely many cocycles from $\Gal(K_M/\Q)$ to $\PGL_2(K_M)$ that satisfy \cref{equ:coc}. We compute the cocycles on a set of generators of $\Gal(K_M/\Q)$ and extend to the whole group using the cocycle property. Let $\zeta$ be one such cocycle.  From \cref{lemma:coboundary P^1} and \cref{rmk:application P^1}, we know that if $\zeta$ is a coboundary in $H^{1}(\Gal(K_{M}/\mathbb{Q}),\PGL_2(K_M))$, then $\zeta$ gives a pair $(\PP^1_{\Q},\pi_1)$ which is isomorphic to a modular curve $(X_G,\pi_G.)$  

For congruence subgroups $\Gamma$ with label $7A^0$, $7C^0$, $7G^0$, $8A^0$, $8M^0$, $11A^0$, $14A^0$, $15A^0$, $16A^0$, $18A^0$ and $21A^0$; we verify using \texttt{Magma} that there exists a $\sigma \in \Gal(K_N/\Q)$ for which there is no degree $1$ function $g \in \Qab(t)$ satisfying the equation $\sigma(\pi_{\Gamma})=\pi_{\Gamma} \circ g.$ Since $\pi_{\Gamma} \in K_N(t)$ the action of $\Gal(K_M/\Q)$ for any multiple $M$ of $N$ factors through $\Gal(K_N/\Q).$ Therefore, for any mutiple $M$ of $N$ there exists a $\sigma \in \Gal(K_M/\Q)$ such that there is no function $g$ satisfying the equation $\sigma(\pi_{\Gamma})=\pi_{\Gamma} \circ g$.    Thus, there exists no pair ($X_{G},\pi_{G}$) such that $G \intersect \SL_2(\Zhat)$ is equal to $\Gamma.$ In these cases, \cref{THM:FIRST} is vacuously true.

For congruence subgroup $\Gamma$ with label $5F^0$, we find a pair $(\PP^1_{\Q},\pi_1)$ when $M=15.$ 

For congruence subgroups with labels $8E^0$ and $8K^0$, we find a pair $(\PP^1_{\Q},\pi_1)$ when $M=16.$ 

For congruence subgroup $\Gamma$ with label $16F^0$, we find a pair $(\PP^1_{\Q},\pi_1)$ when $M=32.$ 

For all the other congruence subgroups $\Gamma$, we find a pair $(\PP^1_{\Q},\pi_1)$ when $M=N$ where $N$ is the level of $\Gamma.$

\subsection{Breaking up the cocycles}

Let us denote the group $\Aut_{\Qab}(X_{G_1},\pi_{G_1})$ obtained in \cref{computing a pair} as $A_{G_1}.$ We would like to mention at this point that for every pair $(X_{G_1},\pi_{G_1})$ which we compute we have that $\Aut_{\Qab}(X_{G_1},\pi_{G_1})$ is defined over $K_N$, i.e., $A_{G_1}$ equals $\Aut_{K_N}(X_{G_1},\pi_{G_1})$, where $N$ is the level of corresponding congruence subgroup $\Gamma.$

Let $K:=K_{N^{\infty}}$ be the compositum of all fields $K_{N^e}$ with $e \ge 1.$ Let $K'$ be the compositum of all $K_m$ with $m$ relatively prime to $N.$ Then, $KK'=\Qab$ and $K \cap K' = \Q$. 

Let $\phi \colon \Gal(\Qab/\Q) \to A_{G_1}$ be a cocycle. Since, $A_{G_1}$ is defined over $K_N$ the group $\Gal(\Qab/\Q)$ acts via $\Gal(K/\Q).$

The following lemma shows that there are finitely many cocycles from $\Gal(K/\Q)$ to $A_{G_1}.$

\begin{lemma}\label{vertical cocycles}

Let $b$ be the least common multiple of orders of elements $a \in A_{G_1}$ that divide some power of $N$. If $N$ is congruent to $2$ modulo $4$, then any cocycle $\zeta \colon \Gal(K/\Q) \to A_{G_1}$ factors through $\Gal(K_{2Nb}/\Q).$ If $N$ is not congruent to $2$ modulo $4$, then any cocycle $\zeta \colon \Gal(K/\Q) \to A_{G_1}$ factors through $\Gal(K_{Nb}/\Q).$ 

\end{lemma}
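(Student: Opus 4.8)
The plan is to analyze a cocycle $\zeta\colon\Gal(K/\Q)\to A_{G_1}$, where $K=K_{N^\infty}$, and show it factors through a finite quotient $\Gal(K_M/\Q)$ with $M$ as in the statement. First I would observe that since $A_{G_1}$ is a finite group, the image of $\zeta$ generates a finite subgroup $B\subseteq A_{G_1}$, and that $\zeta$ being continuous means it factors through $\Gal(K_{N^e}/\Q)$ for some $e$. The key is to control which part of $\Gal(K/\Q)$ can act nontrivially. Writing $\chi_{\cyc}$ restricted to $\Gal(K/\Q)$ as an isomorphism onto $\varprojlim(\ZZ/N^e\ZZ)^\times = \prod_{p\mid N}\ZZ_p^\times$, the structure of this group (and in particular its torsion and the structure of the pro-$p$ parts) governs everything.

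The main steps: (1) Note that since $\Gal(K/\Q)$ acts on $A_{G_1}$ through a quotient of order dividing $N$ (the action factors through $\Gal(K_N/\Q)$), and cocycles valued in an abelian group twisted by such an action are constrained, one reduces to understanding homomorphisms and small corrections. More precisely, $A_{G_1}$ is abelian (it is a finite subgroup of $\PGL_2(\Q)$ that is abelian in all our cases — actually one should use that the relevant $\calA_i$ are abelian; I would invoke that $A_{G_1}$ is abelian as established in \cref{Horz group decsription}), so a cocycle restricted to the kernel of the action is a genuine homomorphism $\Gal(K/K_N)\to A_{G_1}$, whose image lies in the subgroup of $A_{G_1}$ consisting of elements whose order divides some power of $N$ — call its exponent $b$. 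Such a homomorphism factors through the maximal exponent-$b$ abelian quotient of $\Gal(K/K_N)\cong\prod_{p\mid N}\ZZ_p^\times$ (an open subgroup thereof), which corresponds to a cyclotomic field of conductor dividing $Nb$ — with the extra factor of $2$ when $N\equiv2\pmod4$ coming from the fact that $\Q(\zeta_N)=\Q(\zeta_{N/2})$ in that case, so one must pass to conductor $2Nb$ to genuinely see the exponent-$b$ quotient. (2) Then combine: the cocycle on all of $\Gal(K/\Q)$ is determined by its values on generators, each of which either sits in $\Gal(K/K_N)$ (handled above) or maps into the finite group $\Gal(K_N/\Q)$, and the cocycle property bootstraps the whole thing to factor through $\Gal(K_{Nb}/\Q)$ (resp. $\Gal(K_{2Nb}/\Q)$).

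The step I expect to be the main obstacle is the careful bookkeeping in (1): precisely identifying the smallest cyclotomic field through which a homomorphism $\prod_{p\mid N}\ZZ_p^\times\to A_{G_1}$ of exponent-$b$ image factors, and handling the $2$-adic subtlety. For odd $p$, $\ZZ_p^\times\cong\ZZ/(p-1)\times\ZZ_p$, and the exponent-$b$ quotient is $\ZZ/\gcd(b,p-1)\times\ZZ/p^{v_p(b)}$, cut out inside $\Q(\zeta_{p^\infty})$ by $\Q(\zeta_{p^{1+v_p(b)}})$ together with the degree-$\gcd(b,p-1)$ subfield of $\Q(\zeta_p)$ — all contained in $\Q(\zeta_{p^{1+v_p(b)}})\subseteq\Q(\zeta_{Nb})$ once $p^{1+v_p(b)}\mid Nb$. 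The $2$-adic part $\ZZ_2^\times\cong\ZZ/2\times\ZZ_2$ is the culprit: if $N\equiv2\pmod4$ then $N$ contributes no $2$-power beyond $2$ and $\Q(\zeta_N)=\Q(\zeta_{N/2})$, so to realize an element of order $2^k$ in the image one needs conductor $2^{k+1}$, forcing the $2Nb$ rather than $Nb$; if $4\mid N$ (or $N$ odd) no such loss occurs. Once this is pinned down, I would verify the divisibility $p^{1+v_p(b)}\mid Nb$ for every $p\mid N$ by the definition of $b$, and the conclusion follows. The remaining combination in (2) is routine since $K_{Nb}\supseteq K_N$ already carries all the needed data and the cocycle property forces compatibility.
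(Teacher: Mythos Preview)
Your outline is essentially correct and follows the same idea as the paper---restrict to the subgroup $\Gal(K/K_N)$ on which the $\Gal(K/\Q)$-action on $A_{G_1}$ is trivial, observe the cocycle becomes a homomorphism there, and bound its image by $b$. However, you insert an incorrect and unnecessary claim: $A_{G_1}$ need not be abelian, and it does not lie in $\PGL_2(\Q)$ but in $\PGL_2(K_N)$; \cref{Horz group decsription} concerns the abelian subgroups $\calA$, not $A_{G_1}$ itself. Fortunately you do not actually need abelianness: for any (possibly non-abelian) target, a cocycle restricted to the kernel of the action is a homomorphism simply because $\zeta(\sigma\tau)=\zeta(\sigma)\cdot\sigma(\zeta(\tau))=\zeta(\sigma)\zeta(\tau)$ when $\sigma$ acts trivially. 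Your step~(2) is also right once stated cleanly: if $\zeta$ is trivial on the normal subgroup $H=\Gal(K/K_{Nb})$, the identity $\zeta(gh)=\zeta(g)\cdot g(\zeta(h))=\zeta(g)$ for $h\in H$ shows $\zeta$ descends to the quotient.

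The paper's argument is shorter than your $p$-adic analysis of $\prod_{p\mid N}\ZZ_p^\times$. After first passing to some finite level $M$, it notes that $\Gal(K_M/K_N)$ (respectively $\Gal(K_M/K_{2N})$ when $N\equiv2\pmod4$) is \emph{cyclic}: it is a direct product over $p\mid N$ of cyclic $p$-groups, and the $2$-part is cyclic once one works above $K_4$, which is exactly why the $2N$ adjustment is needed. Then a generator $g$ maps to some $a\in A_{G_1}$ whose order divides $|G|$, hence divides a power of $N$, hence divides $b$; so $\zeta(g^b)=a^b=1$ and $\zeta$ kills the index-$b$ subgroup, which is $\Gal(K_M/K_{Nb})$ (respectively $\Gal(K_M/K_{2Nb})$). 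This replaces your exponent-$b$-quotient computation with a one-line cyclicity observation, but the two arguments are otherwise the same.
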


\begin{proof}

Any cocycle $\zeta \colon \Gal(K/\Q) \to A_{G_1}$ factors through $\Gal(K_M/\Q)$ where $M$ is a multiple of $N$ with the same prime divisors as $N$. Define $G:=\Gal(K_M/K_N)$ if $N$ is not congruent to $2$ modulo $4$ and $G:=\Gal(K_M/K_{2N})$ if $N$ is congruent to $2$ modulo $4.$ The Galois group $G$ is cyclic and its cardinality has the same prime factors as $N$. Let $g$ be a generator of $G.$ Let $a:=\zeta(g).$ Then by our choice of $b$ we get that $a^b=1.$ Also observe that $\zeta$ is a homomorphism on $G$, since $A_{G_1}$ is defined over $K_N.$ Therefore, $\zeta(g^b)=1.$ It follows that $\zeta$ factors through $\Gal(K_{Nb}/\Q)$ if $N$ is not congruent to $2$ modulo $4$ and $\zeta$ factors through $\Gal(K_{2Nb}/\Q)$ if $N$ is congruent to $2$ modulo $4.$  
\end{proof}
Let $\{\zeta_i\}_{i=1}^{l}$ be the finite set of cocycles up to coboundaries from $\Gal(K/\Q)$ to $A_{G_1}.$ They can be computed using \cref{vertical cocycles}.
Fix a cocycle \[\zeta \colon \Gal(K/\Q) \to A_{G_1}.\] Fix $(X_{G'},\pi_{G'})$ as a $K$-twist of $(X_{G_1},\pi_{G_1})$ with respect to $\zeta$. There is an isomorphism $f \colon (X_{G_1})_{K} \to (X_{G'})_{K}$ such that $\zeta(\sigma)=f^{-1} \circ \sigma(f)$ is a representative of $\theta((X_{G'},\pi_{G'})).$  
 
\begin{lemma}\label{twisting two times}
Given a cocycle $\phi \colon \Gal(\Qab/\Q) \to A_{G_1}$ satisfying $\phi(i(\sigma,1))=\zeta(\sigma)$ for every $\sigma \in \Gal(K/\Q)$, there exists a cocycle $\gamma' \colon \Gal(K'/\Q) \to \Aut(X_{G'},\pi_{G'}),$ where $\Gal(K'/\Q)$ acts trivially on $\Aut(X_{G'},\pi_{G'})$ satisfying $\gamma'(\tau)=f \circ \phi(i(1,\tau)) \circ f^{-1}$ for every $\tau \in \Gal(K'/\Q).$ Conversely, given a cocycle $\gamma' \colon \Gal(K'/Q) \to \Aut(X_{G'},\pi_{G'})$, there exists a cocycle $\phi \colon \Gal(\Qab/\Q) \to A_{G_1}$ satisfying $\phi(i(\sigma,1))=\zeta(\sigma)$ for every $\sigma \in \Gal(K/\Q)$ and $\phi(i(1,\tau))=f^{-1} \circ \gamma'(\tau) \circ f$ for every $\tau \in \Gal(K'/\Q).$ 
\end{lemma}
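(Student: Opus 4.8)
The plan is to deduce this lemma directly from the cocycle decomposition of \cref{breakup of cocs}, using \cref{Aut:twists} and \cref{Automorph over Q} to transport automorphisms across the isomorphism $f$. We are exactly in the situation of \cref{breakup of cocs}, with $K=K_{N^{\infty}}$, with $K'$ the compositum of the $K_m$ for $m$ relatively prime to $N$, with $KK'=\Qab$, and with the $\Gal(\Qab/\Q)$-group $A:=A_{G_1}$: indeed $A_{G_1}$ is defined over $K_N\subseteq K$, so $\Gal(\Qab/\Q)$ acts on $A_{G_1}$ through $\Gal(K/\Q)$ and $\Gal(K'/\Q)$ acts trivially, which is precisely the hypothesis needed there. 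Since $A_{G_1}$ is finite, every cocycle into it factors through a finite quotient, so continuity will never cause trouble. Write $S_\zeta:=\{a\in A_{G_1}\mid \sigma(a)=\zeta(\sigma)^{-1}a\,\zeta(\sigma)\ \forall\,\sigma\in\Gal(K/\Q)\}$ for the subgroup appearing in \cref{breakup of cocs}. The key observation is that, combining \cref{Aut:twists} with \cref{Automorph over Q} (applied with the cocycle $\sigma\mapsto f^{-1}\circ\sigma(f)$, which is exactly our $\zeta$), conjugation $g\mapsto f\circ g\circ f^{-1}$ restricts to a group isomorphism $S_\zeta\xrightarrow{\ \sim\ }\Aut(X_{G'},\pi_{G'})$.

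For the first direction, given $\phi\colon\Gal(\Qab/\Q)\to A_{G_1}$ with $\phi(i(\sigma,1))=\zeta(\sigma)$, I would set $\gamma(\tau):=\phi(i(1,\tau))$. By \cref{breakup of cocs}(i), $\gamma\colon\Gal(K'/\Q)\to A_{G_1}$ is a homomorphism whose image lies in $S_\zeta$. Define $\gamma'(\tau):=f\circ\gamma(\tau)\circ f^{-1}$. Since $\gamma(\tau)\in A_{G_1}=\Aut_K(X_{G_1},\pi_{G_1})$, \cref{Aut:twists} gives $\gamma'(\tau)\in\Aut_K(X_{G'},\pi_{G'})$; since moreover $\gamma(\tau)\in S_\zeta$, \cref{Automorph over Q} upgrades this to $\gamma'(\tau)\in\Aut(X_{G'},\pi_{G'})$. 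Because conjugation by $f$ is a group homomorphism, $\gamma'$ is again a homomorphism, hence a cocycle for the trivial $\Gal(K'/\Q)$-action on $\Aut(X_{G'},\pi_{G'})$, and $\gamma'(\tau)=f\circ\phi(i(1,\tau))\circ f^{-1}$ holds by construction.

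For the converse, given $\gamma'\colon\Gal(K'/\Q)\to\Aut(X_{G'},\pi_{G'})$, I would set $\gamma(\tau):=f^{-1}\circ\gamma'(\tau)\circ f$. By \cref{Aut:twists} this lies in $A_{G_1}$, and since $\gamma'(\tau)$ is defined over $\Q$, \cref{Automorph over Q} forces $\gamma(\tau)\in S_\zeta$; as before $\gamma$ is a homomorphism, so its image is contained in $S_\zeta$. Then $\zeta$ and $\gamma$ satisfy the hypotheses of \cref{breakup of cocs}(ii), so $\phi\colon\Gal(K/\Q)\times\Gal(K'/\Q)\to A_{G_1}$ given by $\phi((\sigma,\tau)):=\zeta(\sigma)\cdot\sigma(\gamma(\tau))$ is a cocycle, which via $i$ I regard as a cocycle on $\Gal(\Qab/\Q)$. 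Using $\zeta(1)=1$ and $\gamma(1)=1$, evaluation gives $\phi(i(\sigma,1))=\zeta(\sigma)$ for $\sigma\in\Gal(K/\Q)$ and $\phi(i(1,\tau))=\gamma(\tau)=f^{-1}\circ\gamma'(\tau)\circ f$ for $\tau\in\Gal(K'/\Q)$, as required.

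The lemma carries essentially no deep content beyond this: once the abstract subgroup $S_\zeta$ is identified with $\Aut(X_{G'},\pi_{G'})$ via $f$, the two directions are literally \cref{breakup of cocs}(i) and (ii) read through that identification. The only step requiring care is the bookkeeping of fields of definition — verifying that $\Gal(K'/\Q)$ acts trivially on $A_{G_1}$ (because $A_{G_1}$ is defined over $K_N$) and on $\Aut(X_{G'},\pi_{G'})$ (because these automorphisms are defined over $\Q$), so that on the $K'$-side "homomorphism" and "cocycle for the trivial action" coincide, and confirming that $\zeta$ is precisely the cocycle $\sigma\mapsto f^{-1}\circ\sigma(f)$ so that \cref{Automorph over Q} applies verbatim.
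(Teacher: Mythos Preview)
Your proof is correct and follows essentially the same approach as the paper's: both directions reduce to \cref{breakup of cocs} once one uses \cref{Automorph over Q} (with \cref{Aut:twists}) to identify the subgroup $S_\zeta\subseteq A_{G_1}$ with $\Aut(X_{G'},\pi_{G'})$ via conjugation by $f$. Your version is more explicit about this identification and about why the trivial $\Gal(K'/\Q)$-action makes homomorphisms and cocycles coincide, but the argument is the same; note that your formula $\phi((\sigma,\tau))=\zeta(\sigma)\cdot\sigma(\gamma(\tau))$ agrees with the paper's $\gamma(\tau)\zeta(\sigma)$ by the identity established in the proof of \cref{breakup of cocs}.
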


\begin{proof}

Let $\phi \colon \Gal(\Qab/\Q) \to A_{G_1}$ be a cocycle satisfying $\phi(i(\sigma,1))=\zeta(\sigma)$ for every $\sigma \in \Gal(K/\Q).$ Let $\gamma \colon \Gal(K'/\Q) \to A_{G_1}$ be defined as $\gamma(\tau)=\phi(i(1,\tau)).$ Using \cref{Automorph over Q}, $\gamma(\tau)=f^{-1} \circ h_{\tau} \circ f$ for some $h_{\tau} \in \Aut(X_{G'},\pi_{G'}).$ Let $\gamma' \colon \Gal(K'/\Q) \to \Aut(X_{G'},\pi_{G'})$ defined as $\gamma'(\tau)=h_{\tau}.$ Since, $\gamma$ is a homomorphism, $\gamma'$ is also a homomorphism.

We will now show the converse. Define $\gamma \colon \Gal(K'/\Q) \to A_{G_1}$ as $\gamma(\tau)=f^{-1} \circ \gamma'(\tau) \circ f.$ Define \[\phi \colon \Gal(\Qab/\Q) \to A_{G_1}\] as $\phi(i(\sigma,\tau))= \gamma(\tau)\zeta(\sigma).$ Using \cref{breakup of cocs}, it follows that $\phi$ is a cocycle.
\end{proof}

\begin{lemma}\label{twisting P^1 to get a P^1}
Assume that $\zeta$ is a coboundary in $H^{1}(\Gal(K/\Q),\PGL_2(K)).$ A twist of $(X_{G_1},\pi_{G_1})$ by a cocycle $\phi \colon \Gal(\Qab/\Q) \to A_{G_1}$ satisfying $\phi(i(\sigma,1))=\zeta(\sigma)$ for every $\sigma \in \Gal(K/\Q)$ is isomorphic to $\PP^1_{\Q}$ if and only if twist of $(X_{G'},\pi_{G'})$ by the corresponding cocycle $\gamma'$ is isomorphic to $\PP^1_{\Q}.$
\end{lemma}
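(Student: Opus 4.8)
The plan is to chase the equivalence through the two-step twisting construction set up in the preceding lemmas, using that being a $\PP^1_\Q$ is detected by a cocycle being a coboundary in $H^1$ with values in the full automorphism group $\PGL_2$. Let me write $\phi \colon \Gal(\Qab/\Q) \to A_{G_1}$ be a cocycle with $\phi(i(\sigma,1)) = \zeta(\sigma)$ for every $\sigma \in \Gal(K/\Q)$, and let $(X_{G_\phi},\pi_{G_\phi})$ be the corresponding twist of $(X_{G_1},\pi_{G_1})$. By \cref{twisting two times}, $\phi$ corresponds to a cocycle $\gamma' \colon \Gal(K'/\Q) \to \Aut(X_{G'},\pi_{G'})$, and I claim $(X_{G_\phi},\pi_{G_\phi})$ is isomorphic, as a twist, to the twist of $(X_{G'},\pi_{G'})$ by $\gamma'$. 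Indeed, the isomorphism $f \colon (X_{G_1})_K \to (X_{G'})_K$ is defined over $K$, so over $\Qab = KK'$ the $\Gal(K'/\Q)$-action fixes $f$; hence conjugating by $f$ transports the cocycle $\phi$ (viewed over $\Qab$ with values in $\PGL_2(\Qab)$, via the breakup $\phi(i(\sigma,\tau)) = \gamma(\tau)\zeta(\sigma)$ from \cref{breakup of cocs}) to the cocycle $\sigma \mapsto f \circ \phi(\cdot) \circ f^{-1}$, which by construction agrees with the cocycle defining the twist of $(X_{G'},\pi_{G'})$ by $\gamma'$ up to the coboundary $\sigma \mapsto f^{-1}\circ \sigma(f) = \zeta(\sigma)$. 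Since $\zeta$ is assumed a coboundary in $H^1(\Gal(K/\Q),\PGL_2(K))$, after composing $f$ with an element of $\PGL_2(K)$ witnessing that, we may assume $\zeta$ is trivial as a $\PGL_2$-valued cocycle, so the two twists are isomorphic over $\Q$.

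\textbf{The key reduction.} With that identification in hand, both sides of the claimed equivalence become the same statement: $(X_{G_\phi},\pi_{G_\phi}) \cong \PP^1_\Q$ if and only if its defining cocycle, valued in $\PGL_2(\Qab)$, is a coboundary in $H^1(\Gal(\Qab/\Q),\PGL_2(\Qab))$, by \cref{lemma:coboundary P^1}; and likewise the twist of $(X_{G'},\pi_{G'})$ by $\gamma'$ is a $\PP^1_\Q$ if and only if its defining $\PGL_2(\Qab)$-valued cocycle is a coboundary. Having just shown these two $\PGL_2(\Qab)$-valued cocycles differ by the coboundary $\zeta$ (which is a coboundary by hypothesis), they are cohomologous, so one is a coboundary exactly when the other is. This gives both directions at once.

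\textbf{The main obstacle} is bookkeeping rather than mathematics: one must be careful that \cref{lemma:coboundary P^1} was stated for $(Y,\pi_Y)$ a $K$-twist of $(X,\pi_X)$ with $X = \PP^1_\Q$, and that it applies verbatim with $K$ replaced by $\Qab$ and with $X = X_{G_1}$ (which is indeed $\PP^1_\Q$ since $(X_{G_1},\pi_{G_1})$ was found to be isomorphic to $(\PP^1_\Q,\pi_1)$ in \cref{computing a pair}), respectively with $X = X_{G'}$ — but here $X_{G'}$ need not be $\PP^1_\Q$, so the criterion to apply is rather \cref{Automorph over Q} together with the observation that a twist of $(X_{G'},\pi_{G'})$ by $\gamma' \colon \Gal(K'/\Q) \to \Aut(X_{G'},\pi_{G'})$ is $\PP^1_\Q$ iff the composite $\Qab$-valued cocycle, pulled back along $f$ to a $\PGL_2(\Qab)$-valued cocycle on $\Gal(\Qab/\Q)$, is a coboundary. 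So the honest proof keeps everything in terms of $\PGL_2(\Qab)$-valued cocycles on $\Gal(\Qab/\Q)$ from the start: the twist of $(X_{G_1},\pi_{G_1})$ by $\phi$ and the twist of $(X_{G'},\pi_{G'})$ by $\gamma'$ are, by \cref{twisting two times} and the fact that $f$ is $K$-rational, the same $\Qab$-twist of $\PP^1_\Q$ (once $\zeta$ is trivialized in $\PGL_2$), hence have the same $H^1(\Gal(\Qab/\Q),\PGL_2(\Qab))$-class, and \cref{lemma:coboundary P^1} then finishes. I would close by remarking that the hypothesis that $\zeta$ is a $\PGL_2(K)$-coboundary is exactly what is needed to make "twist of $(X_{G_1},\pi_{G_1})$ by $\phi$" make sense as a curve that could possibly be $\PP^1_\Q$ in the first place — without it the claim has no content on the $(X_{G_1},\pi_{G_1})$ side.
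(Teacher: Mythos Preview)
Your approach is essentially the paper's: show that $\phi$ and the inflation of $\gamma'$ to $\Gal(\Qab/\Q)$ are cohomologous as $\PGL_2(\Qab)$-valued cocycles, then invoke \cref{lemma:coboundary P^1}. The paper does this by writing $\zeta(\sigma)=A^{-1}\sigma(A)$ with $A\in\PGL_2(K)$ and computing $\phi(i(\sigma,\tau))=\zeta(\sigma)\sigma(\gamma(\tau))=A^{-1}\gamma'(\tau)\sigma(A)$, which is exactly the relation $\phi=A^{-1}\cdot\widetilde{\gamma'}\cdot(\text{--})(A)$.

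Two points of confusion in your write-up are worth fixing. First, you say ``$X_{G'}$ need not be $\PP^1_\Q$'' and then labor in your third paragraph to work around this. But the hypothesis that $\zeta$ is a coboundary in $H^1(\Gal(K/\Q),\PGL_2(K))$ is \emph{precisely} the statement, via \cref{lemma:coboundary P^1}, that $X_{G'}\cong\PP^1_\Q$. The paper uses this directly: once $X_{G'}\cong\PP^1_\Q$ one has $\Aut(X_{G'},\pi_{G'})\subseteq\PGL_2(\Q)$, so $\gamma'(\tau)$ is $\Gal(K/\Q)$-invariant and the computation goes through cleanly. Your workaround is not wrong, but it is unnecessary. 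Second, your closing remark that the coboundary hypothesis is ``what is needed to make the twist of $(X_{G_1},\pi_{G_1})$ by $\phi$ make sense as a curve that could possibly be $\PP^1_\Q$'' is incorrect: that twist is always a well-defined curve and the question of whether it is $\PP^1_\Q$ is always meaningful. The hypothesis is what makes $X_{G'}\cong\PP^1_\Q$, which is what allows you to compare the two sides symmetrically.

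A minor terminological point: ``conjugating by $f$'' is not the operation that relates cohomologous cocycles; the correct formula is $\xi_1(\sigma)=a\cdot\xi_2(\sigma)\cdot\sigma(a^{-1})$, not $a\cdot\xi_2(\sigma)\cdot a^{-1}$. Your argument survives this because what you actually use is the cohomology relation with $a=A$ (or $f$), but the prose should match.
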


\begin{proof}
 Since $\zeta$ is a coboundary in $H^{1}(\Gal(K/\Q),\PGL_2(K))$, $\zeta(\sigma)=A^{-1}\sigma(A)$ for every $\sigma$ in $\Gal(K/\Q)$ and for some $A \in \PGL_2(K).$ Explicitly, $\gamma(\tau)=A^{-1}\gamma^{'}(\tau) A$, where $\gamma^{'}(\tau) \in \Aut(X_{G'},\pi_{G'}) \subsetneq \PGL_2(\Q).$ The inclusion $\Aut(X_{G'},\pi_{G'}) \subsetneq \PGL_2(\Q)$ uses the assumption that $X_{G'}$ is isomorphic to $\PP^1_{\Q}.$
 The cocycle $\phi \colon \Gal(\Qab/\Q) \to A_{G_1}$ which is given by $\phi(\sigma,\tau)= \zeta(\sigma)\sigma(\gamma(\tau))$ becomes $A^{-1}\sigma(A)\sigma(A^{-1}\gamma^{'}(\tau)A)=A^{-1}\gamma^{'}(\tau)\sigma(A).$ The two cocycles $\phi$ and $\gamma'$ are cohomologous. Therefore, the lemma holds.  

\end{proof}

\begin{remark}\label{generalizing twisting P^1 lemma}
\cref{twisting P^1 to get a P^1} holds for any pair of fields $(K,K')$ as long as they satisfy the two conditions; $K \intersect K'=\Q$ and $KK'=\Qab.$
\end{remark}

\cref{twisting P^1 to get a P^1} shows that if $X_{G'}$ is isomorphic to $\PP^1_{\Q}$, then twists of $(X_{G_1},\pi_{G_1})$ by cocycles $\phi \colon \Gal(\Qab/\Q) \to A_{G_1}$ that satisfy $\phi(i(\sigma,1))=\zeta(\sigma)$ for every $\sigma \in \Gal(K/\Q)$ and are isomorphic to $\PP^1_{\Q}$ can be obtained via homomorphisms from \[\Gal(K'/\Q) \to \Aut(X_{G'},\pi_{G'}) \to \PGL_2(\Q)\] that are coboundaries.

Let us now assume that $\zeta$ is not a coboundary in $H^{1}(\Gal(K/\Q),\PGL_2(K)).$ We want to classify the cocycles $\gamma' \colon \Gal(K'/\Q) \to \Aut(X_{G'},\pi_{G'})$ that give us $\PP^1_{\Q}.$ Since $\Gal(K'/\Q)$ acts trivially on $\Aut(X_{G'},\pi_{G'})$, the image of $\gamma^{'}$ is an abelian subgroup of $\Aut(X_{G'},\pi_{G'}).$ Let us denote it by $\calA.$ In all the cases where $\zeta$ is not a coboundary in $H^{1}(\Gal(K/\Q),\PGL_2(K))$ we find by computation that $\calA$ is either of order $1$, order $2$, order $3$, cyclic of order $4$ or Klein-$4$ group.  

In the following two lemmas we describe the cases where there exists no cocycle from $\Gal(K'/\Q)$ to $\calA$ that gives rise to a $\PP^1_{\Q}.$   

\begin{lemma}\label{obstruction 1}

If $\calA$ is a cyclic group of order 3, then the twist of $(X_{G'},\pi_{G'})$ by any cocycle $\gamma' \colon \Gal(K'/\Q) \to \calA$ is not isomorphic to $\PP^1_{\Q}$.
\end{lemma}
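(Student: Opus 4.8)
The plan is to first use \cref{lemma:coboundary P^1}: since $\zeta$ is not a coboundary in $H^1(\Gal(K/\Q),\PGL_2(K))$, the curve $X_{G'}$ is a genus $0$ curve over $\Q$ with no rational point, i.e.\ the Severi--Brauer conic $\mathrm{SB}(B)$ of a nonsplit quaternion algebra $B$ over $\Q$, and in particular $X_{G'}$ is not isomorphic to $\PP^1_{\Q}$. I would then prove that \emph{every} twist of $(X_{G'},\pi_{G'})$ by a cocycle $\gamma'\colon\Gal(K'/\Q)\to\calA$ has underlying curve isomorphic to $X_{G'}$, which finishes the lemma. Now $\calA\subseteq\Aut(X_{G'},\pi_{G'})=\Aut_{\Q}(X_{G'},\pi_{G'})\subseteq\Aut((X_{G'})_{\Qbar})=\PGL_1(B)(\Qbar)$, and $\Gal(K'/\Q)$ acts trivially on $\Aut(X_{G'},\pi_{G'})$, so $\gamma'$ is a cocycle for the Galois action on $\PGL_1(B)(\Qbar)$. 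Since a coboundary twist is trivial, the twist of $X_{G'}$ by $\gamma'$ is isomorphic to $X_{G'}$ over $\Q$ as soon as $\gamma'$ is a coboundary in $H^1(\GalQ,\PGL_1(B)(\Qbar))$; the whole proof reduces to establishing that.

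To do this I would first record a canonical lifting over $\Qbar$: since $B\otimes_{\Q}\Qbar\isom M_2(\Qbar)$, an element $a\in\PGL_2(\Qbar)$ of order dividing $3$ has a \emph{unique} lift $\tilde a\in\SL_2(\Qbar)$ of order dividing $3$. Indeed, for any lift $g\in\GL_2(\Qbar)$ of $a$ one has $g^3=\lambda I$, and the scalar $\mu=\det(g)/\lambda$ is the only one with $(\mu g)^3=I$ and $\det(\mu g)=1$. Because this assignment $a\mapsto\tilde a$ is cut out by group-theoretic and reduced-norm conditions, it is equivariant for conjugation and for the Galois action on $\PGL_1(B)$; hence it carries the order $3$ cyclic group $\calA$ to an order $3$ cyclic subgroup $\tilde\calA\subseteq\SL_1(B)(\Qbar)$ via a group isomorphism $\calA\isomto\tilde\calA$, and the pointwise Galois-invariance of $\calA$ (coming from $\calA\subseteq\Aut_{\Q}$) forces $\tilde\calA$ to lie in $\SL_1(B)(\Q)\subseteq B^{\times}=\GL_1(B)(\Q)$.

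Composing $\gamma'$ with $\calA\isomto\tilde\calA$ then produces a homomorphism $\tilde\gamma'\colon\Gal(K'/\Q)\to B^{\times}$ whose image lies in the Galois-fixed subgroup $\GL_1(B)(\Q)\subseteq\GL_1(B)(\Qbar)$, so $\tilde\gamma'$ is a cocycle for $\GL_1(B)$. By Hilbert $90$ for $\GL_1$ of a central simple algebra, $H^1(\GalQ,\GL_1(B))=1$, so $\tilde\gamma'$ is a coboundary; reducing modulo the center $\G_m$ shows that its image $\gamma'$ is a coboundary in $H^1(\GalQ,\PGL_1(B)(\Qbar))$, which is exactly the reduction target from the first paragraph. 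Hence the twist of $X_{G'}$ by $\gamma'$ is isomorphic to $X_{G'}$, which is not isomorphic to $\PP^1_{\Q}$.

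The step I expect to be the main obstacle is the canonical order $3$ lift together with the verification that it is equivariant for both the (twisted) Galois action on $\PGL_1(B)$ and the inclusion $\SL_1(B)\subseteq\GL_1(B)$; once that is in place the rest is formal. A more hands-on alternative, in the spirit of \cref{sec:Twists}, would be to argue place by place: $X_{G'}(\Q)=\emptyset$ forces $X_{G'}(\Q_{v_0})=\emptyset$ at some place $v_0$, and running the same $\GL_1(B)$ argument over $\Q_{v_0}$ shows that the twist also has no $\Q_{v_0}$-point, so it cannot be $\PP^1_{\Q}$.
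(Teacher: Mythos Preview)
Your argument is correct, but it takes a genuinely different and considerably heavier route than the paper's. The paper simply observes that since $\calA$ has order $3$, the homomorphism $\gamma'$ factors through $\Gal(L/\Q)$ for some cyclic cubic extension $L\subseteq K'$; the twist and $X_{G'}$ become isomorphic over $L$, so if the twist were $\PP^1_{\Q}$ then $X_{G'}$ would acquire a rational point over a degree $3$ extension. That is impossible because a nonsplit conic corresponds to a nontrivial $2$-torsion class in the Brauer group, and the corestriction--restriction identity $\mathrm{cor}\circ\mathrm{res}=[L:\Q]$ forces such a class to remain nontrivial over any odd-degree extension.

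Your approach instead proves the stronger statement that the twist is \emph{isomorphic to $X_{G'}$ itself over $\Q$}: you lift $\calA$ canonically into $\SL_1(B)(\Q)$ using the unique cube-root-of-identity lift, then invoke Hilbert~$90$ for $\GL_1(B)$ to kill the lifted cocycle, whence $\gamma'$ is already a coboundary in $H^1(\GalQ,\PGL_1(B))$. The lifting step and its equivariance are handled correctly. This argument is more structural and immediately generalizes to any $\calA$ of odd order (indeed of order coprime to $2$, the period of $B$), whereas the paper's argument generalizes just as easily via the period-divides-index-divides-degree principle. For the purpose at hand, though, the paper's one-line splitting-field argument is the more economical choice; your machinery of canonical lifts and $\GL_1(B)$-Hilbert~$90$ is sound but is more than what the lemma needs.
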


\begin{proof}

Assume that there exists a cocycle $\gamma' \colon \Gal(K'/\Q) \to \calA$ such that twist of $(X_{G'},\pi_{G'})$ by $\gamma'$ is isomorphic to $\PP^1_{\Q}$. We are given that $\calA$ is of order $3$. Therefore, $\gamma'$ factors through $\Gal(L/\Q)$, where $L$ is a degree $3$ Galois extension of $\Q.$ This would imply that $X_{G'}$ becomes isomorphic to $\PP^1_{\Q}$ over an extension of degree $3$ over $\Q$ which is a contradiction. 
\end{proof}
 
\begin{lemma}\label{obstruction 2}

Let $X_{G'}$ be the genus $0$ curve explicitly given as a conic $C:X^2+Y^2+Z^2=0 \subseteq \PP^2_{\Q}.$ Let $Q$ denote a Klein-4 twist of $C$  described by $[X:Y:Z] \to [-X:-Y:Z]$ and $[X:Y:Z] \to [X:-Y:-Z]$ over $K'$, here $K'= \bigcup_{(m,2)=1} K_m.$ Then, $Q$ has no rational points.
\end{lemma}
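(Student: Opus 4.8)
The plan is to make the twist $Q$ completely explicit as a diagonal ternary conic over $\Q$, and then exhibit a $2$-adic obstruction coming from the fact that every quadratic subfield of $K'$ is $\Q(\sqrt d)$ with $d\equiv 1\smallpmod{4}$.

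The two automorphisms $\sigma_1\colon[X\!:\!Y\!:\!Z]\mapsto[-X\!:\!-Y\!:\!Z]$ and $\sigma_2\colon[X\!:\!Y\!:\!Z]\mapsto[X\!:\!-Y\!:\!-Z]$ are diagonal elements of $\PGL_3$ that stabilise $C$, so $\calA=\langle\sigma_1,\sigma_2\rangle$ consists of coordinate sign changes and $\Gal(K'/\Q)$ acts trivially on it; hence the cocycle $\gamma\colon\Gal(K'/\Q)\to\calA$ defining $Q$ is a homomorphism. Assume first that $\gamma$ is surjective (the cases $\#\operatorname{im}\gamma\le 2$ are easier and handled at the end). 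Then $\gamma$ identifies $\calA$ with $\Gal(L/\Q)$ for a biquadratic field $L\subseteq K'$. Since $K'=\bigcup_{(m,2)=1}K_m$ is a compositum of cyclotomic fields of odd conductor, each of the three quadratic subfields of $L$ is $\Q(\sqrt d)$ for a squarefree integer $d\equiv 1\smallpmod{4}$ (these are precisely the quadratic fields unramified at $2$); write them $\Q(\sqrt{\lambda_1})$, $\Q(\sqrt{\lambda_2})$, $\Q(\sqrt{\lambda_1}\sqrt{\lambda_2})$ with $\lambda_1,\lambda_2$ squarefree and $\equiv 1\smallpmod{4}$. A direct check shows that for a suitable identification $\Gal(L/\Q)\isom\calA$ the matrix $P:=\operatorname{diag}\!\big(\sqrt{\lambda_1},\,\sqrt{\lambda_2},\,\sqrt{\lambda_1}\sqrt{\lambda_2}\big)\in\GL_3(L)$ satisfies $\gamma(\sigma)=P^{-1}\sigma(P)$ in $\PGL_3$ for every $\sigma$; a different identification only permutes $\lambda_1$, $\lambda_2$, $\lambda_1\lambda_2$. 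Arguing as in the proof of \cref{twist as a conic}, with the form $Q_0:=X^2+Y^2+Z^2$ in place of $y^2-xz$, the conic $Q$ is $\Q$-isomorphic to the conic defined by $Q_0\big(P^{-1}(X,Y,Z)^T\big)=0$, that is, after clearing denominators, by
\[
\lambda_1X^2+\lambda_2Y^2+Z^2=0\ \subseteq\ \PP^2_{\Q}.
\]
In particular $Q$ is $\Q$-isomorphic to a diagonal conic $\mu_1X^2+\mu_2Y^2+\mu_3Z^2=0$ whose coefficients $\mu_i$ are squarefree integers with $\mu_i\equiv 1\smallpmod{4}$.

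Next I would show that such a conic has no $\Q_2$-point, which already gives $Q(\Q)=\emptyset$. Suppose $(x_1,x_2,x_3)\in\ZZ_2^3$ is primitive and satisfies $\mu_1x_1^2+\mu_2x_2^2+\mu_3x_3^2=0$. For $w\in\ZZ_2$ one has $w^2\equiv 1\smallpmod{4}$ when $w\in\ZZ_2^\times$ and $w^2\equiv 0\smallpmod{4}$ when $w\in 2\ZZ_2$; since each $\mu_i\equiv 1\smallpmod{4}$, reducing modulo $4$ gives
\[
0\ \equiv\ \mu_1x_1^2+\mu_2x_2^2+\mu_3x_3^2\ \equiv\ \#\{\,i:x_i\in\ZZ_2^\times\,\}\ \smallpmod{4} .
\]
The right-hand side lies in $\{0,1,2,3\}$, so it must equal $0$, forcing $x_1,x_2,x_3\in 2\ZZ_2$, which contradicts primitivity. (Equivalently, $\langle\mu_1,\mu_2,\mu_3\rangle$ is isotropic over $\Q_2$ if and only if the Hilbert symbol $(-\mu_1\mu_3,-\mu_2\mu_3)_2$ equals $1$, and $-\mu_1\mu_3\equiv -\mu_2\mu_3\equiv 3\smallpmod{4}$ forces that symbol to be $-1$.) Hence $Q(\Q_2)=\emptyset$, and therefore $Q(\Q)=\emptyset$.

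The only step that needs care is the explicit diagonalisation: one must verify that for every cocycle $\gamma\colon\Gal(K'/\Q)\to\calA$ and every identification of $\Gal(L/\Q)$ with a subgroup of $\calA$, the twist is $\Q$-isomorphic to a diagonal conic with squarefree coefficients $\equiv 1\smallpmod{4}$ — the mod-$4$ condition being exactly the information supplied by $K'$ consisting of the cyclotomic fields of odd conductor. The degenerate cases are immediate: if $\operatorname{im}\gamma$ has order $2$, the same computation (now with $L=\Q(\sqrt d)$ a single quadratic field) gives $Q$ isomorphic to $X^2+Y^2+dZ^2=0$ with $d\equiv 1\smallpmod{4}$, and if $\gamma$ is trivial then $Q=C$ is $X^2+Y^2+Z^2=0$; both fall under the conic treated above.
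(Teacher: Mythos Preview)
Your proof is correct and follows essentially the same approach as the paper: write the Klein-$4$ twist explicitly as a diagonal conic $dX^2+Y^2+eZ^2=0$ with squarefree $d,e\equiv 1\pmod 4$ (using that quadratic subfields of $K'$ are unramified at $2$), and then exhibit a $2$-adic obstruction. Your treatment is more detailed than the paper's, and your direct mod-$4$ computation is in fact cleaner: the paper asserts that $d,e\equiv 1\pmod 4$ ``hence are squares in $\Q_2$'', which is not literally true (e.g.\ $d\equiv 5\pmod 8$), whereas your argument shows directly that $\mu_1X^2+\mu_2Y^2+\mu_3Z^2=0$ with all $\mu_i\equiv 1\pmod 4$ has no primitive $\ZZ_2$-solution.
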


\begin{proof}

A Klein-4 twist $Q$ of $C$ is given by $dX^2+Y^2+eZ^2=0$ for squarefree integers $d,e.$ Further, since we are twisting over $K'$ we can assume that $d$ and $e$ are congruent to $1$ modulo $4$ and hence are squares in $\Q_2.$ It is now easy to verify that $Q$ has no $2$-adic points. Hence, the lemma holds. 
\end{proof}

In the cases where we get a curve $X_{G'}$ which is not isomorphic to $\PP^1_{\Q}$, we either face the obstructions discussed in \cref{obstruction 1} and \cref{obstruction 2} or we find a cocycle $\gamma' \colon \Gal(K'/\Q) \to \calA$ such that twist of $(X_{G'},\pi_{G'})$ by $\gamma'$ is $\PP^1_{\Q}$; it factors through $\Gal(L_0/\Q)$, where $L_0 \subseteq K_n$ and $n$ is $3$,$5$ or $15.$ Let \{$\gamma'_j$\} be the finite set of cocycles from $\Gal(K_n/\Q)$ to $ \calA$ such that twist of $(X_{G'},\pi_{G'})$ by each $\gamma'_j$ is $\PP^1_{\Q}.$ We compute all these twists $(\PP^1_{\Q},\pi_{\gamma'_j}).$ 

We now discuss how can we get all the other cocycles $\gamma' \colon \Gal(K'/\Q) \to \calA$ such that twist of $(X_{G'},\pi_{G'})$ by $\gamma'$ is $\PP^1_{\Q}.$ Suppose there is another cocycle $\gamma'' \colon \Gal(K'/\Q) \to \calA$ that gives us a $\PP^1_{\Q}$ and factors through $\Gal(L_1/\Q)$, where $L_1$ is a finite Galois extension of $\Q.$ Since we have computed the pairs $(\PP^1_{\Q},\pi_{\gamma'_j})$  corresponding to cocycles $\phi \colon \Gal(KK_n/\Q) \to A_{G_1} \to \PGL_2(KK_n)$ which satisfy the condition $\phi(i(\sigma,1))=\zeta(\sigma)$ for every $\sigma \in \Gal(K/\Q)$, we can assume that $L_1 \subseteq K_m$, where $m$ is coprime to $n.$ Now to obtain all the other coboundaries $\phi \colon \Gal(\Qab/\Q) \to A_{G_1} \to \PGL_2(\Qab)$ satisfying $\phi(i(\sigma,1))=\zeta(\sigma)$ for every $\sigma \in \Gal(K/\Q)$ we apply \cref{twisting P^1 to get a P^1} to every pair $(\PP^1_{\Q},\pi_{\gamma'_j}).$

\subsection{Concluding the proof}

Let $\{(X_{\frakG_i},\pi_{\frakG_i})\}_{i=2}^s$ be the finite set of genus $0$ modular curves isomorphic to $\PP^1_{\Q}$ such that either $(X_{\frakG_i},\pi_{\frakG_i})$ is a twist of $(X_{G_1},\pi_{G_1})$ with respect to a coboundary $\zeta \colon \Gal(K/\Q) \to \PGL_2(K)$ or $(X_{\frakG_i},\pi_{\frakG_i})$ is a twist of $(X_{G_1},\pi_{G_1})$ with respect to a coboundary $\phi \colon \Gal(KK_n/\Q) \to \PGL_2(KK_n)$ satisfying $\phi(i(\sigma,1))=\zeta(\sigma)$ for every $\sigma \in \Gal(K/\Q)$ and $\zeta \colon \Gal(K/\Q) \to \PGL_2(K)$ is not a coboundary.   Further, if $(X_G,\pi_G)$ is any other $\Qab$-twist of $(X_{G_1},\pi_{G_1})$ isomorphic to $\PP^1_{\Q}$, then using \cref{twisting P^1 to get a P^1} and \cref{generalizing twisting P^1 lemma} $(X_G,\pi_G)$ can be obtained as a $\Qab$-twist of some $(X_{\frakG_i},\pi_{\frakG_i})$ for $i \in \{1,\ldots,l\}$ by considering cocycles $\gamma \colon \Gal(\Qab/\Q) \to \Aut(X_{\frakG_i},\pi_{\frakG_i}).$

Hence, \cref{THM:FIRST} holds.

\section{Proof of \cref{THM:SECOND}}\label{proof 2}

From \cref{THM:FIRST} we know that any modular curve $X_G$ isomorphic to $\PP^1_{\Q}$ is a $\Qab$-twist of $(\PP^1_{\Q},\pi_i)$ for some $i \in \{1,\ldots,r\}$ via a homomorphism $\gamma \colon \Gal(\Qab/\Q) \to \calA_i \subseteq \Aut(\PP^1_{\Q},\pi_i).$ 

Let $(X_G,\pi_G)$ be a modular curve isomorphic to $(\PP^1_{\Q},\pi_i)$ for some $i \in \{1,\ldots,r\}$.
Let $\gamma \colon \Gal(\Qab/\Q) \to \Aut(X_G,\pi_G)$ be a homomorphism. Since $\Gal(\Qab/\Q)$ is abelian, the image of $\gamma$ denoted by $\calA$ is a finite abelian group. We compute all such possible finite abelian subgroups up to conjugation in $\PGL_2(\Q).$ We list these possibilities in \cref{sec:families of twists}. We are interested in computing the cocycles $\gamma \colon \Gal(\Qab/\Q) \to \Aut(X_G,\pi_G)$ such that the twist of $(X_G,\pi_G)$ by $\gamma$ is isomorphic to $\PP^1_{\Q}.$ This is described in \cref{sec:families of twists} for various cases of $\calA$ that occur. Furthermore, we can compute the morphism $\pi'$ to j-line of the twist as follows. We factor $\pi_G$ as $J_i \circ u_i$, where $u_i \colon X_G \to \PP^1_{\Q}$ is the function $\pi$ described in the appropriate case of $\calA$ we are in. Then define $\pi':=J_i \circ \pi_{\calA,v}$, where $\pi_{\calA,v}(u) \in \Q(u)$ is given explicitly in each case.

\section{Computing generators for $G$ and finding equation of $X_G$ as a conic using generators of $G$}\label{sec:Applications}

Fix a genus $0$ congruence subgroup $\Gamma$ of level $N$ containing $-I$. Let $h$ be the normalized  hauptmodul of $\Gamma$, see \cref{sec:Haupt} for computation of hauptmoduls. Fix a genus 0 modular curve $(X_G,\pi_G)$ such that $X_G$ is isomorphic to $\PP^1_{\Q}$ and $G \intersect \SL_2(\ZZ)=\Gamma.$ Let $C$ be a matrix in $\PGL_2(\Qab)$ such that $\pi_G(C(h))=j$. Let $M$ be the smallest positive integer satisfying $N|M$ such that $C \in \PGL_2(K_M).$ Let $H := \pi_M(\Gamma).$ In this section we will describe a procedure to compute generators for $G$. Let us denote $\Aut_{K_M}(X_G,\pi_G)$ by $A_G.$ Observe that $A_G$ is a finite subgroup of $\Aut_{K_M}(X_G) \subsetneq PGL_{2}(K_{M})$.

Let us denote $C(h)$ as $u$ for this section. Consider the set $\Delta=\{\xi(u)|~\xi \in A_G\}.$

\begin{lemma}

For $A$ in $N_{GL_{2}(\ZZ/M\ZZ)}(H)/(H)$ and $\xi(u)$ in $\Delta$, define $\xi(u).A=\sigma_{d}(\xi)(u*A)$ where $d=\det(A).$ This is an action of $N_{GL_{2}(\ZZ/M\ZZ)}(H)/(H)$ on $\Delta$. 
\end{lemma}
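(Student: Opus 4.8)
The plan is to prove this by identifying $\Delta$ with the finite group $A_G=\Aut_{K_M}(X_G,\pi_G)$ and realizing the stated operation as composition inside $A_G$, twisted by the Galois action of $\Gal(K_M/\Q)$. Since $u=C(h)$ and $h$ is a hauptmodul of $\Gamma$, the function $u$ is transcendental over $K_M$, so a M\"obius transformation over $K_M$ is determined by its value at $u$; hence $\xi\mapsto\xi(u)$ is a bijection $A_G\to\Delta$, and it suffices to define a right action of $N_{\GL_2(\ZZ/M\ZZ)}(H)/H$ on $A_G$ that matches the given formula under this bijection.

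The key preliminary is a description of how $*A$ interacts with the function field $K_M(u)$. First I would record that $K_M(u)=K_M(h)=\calF_M^{H}$: the inclusion $K_M(h)\subseteq\calF_M^H$ holds because $h$ is modular for $\Gamma$ and $H=\pi_M(\Gamma)$, and equality follows from the Galois correspondence for $\calF_M/K_M(j)$ (which is Galois with group $\SL_2(\ZZ/M\ZZ)/\{\pm I\}$ by \cref{action on F_N}) once one checks that the $*$-stabilizer of $h$ in $\SL_2(\ZZ/M\ZZ)/\{\pm I\}$ is exactly $H/\{\pm I\}$ — a strictly larger stabilizer would produce a degree-one cover of $X(\Gamma)$, impossible for a hauptmodul. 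Next I would use that for $A\in\GL_2(\ZZ/M\ZZ)$ with $d=\det A$, the operator $*A$ on $\calF_M$ is a multiplicative field automorphism that is $\sigma_d$-semilinear over $K_M$, i.e. $(\lambda f)*A=\sigma_d(\lambda)(f*A)$ for $\lambda\in K_M$. When $A$ normalizes $H$, $*A$ stabilizes $\calF_M^H=K_M(u)$, so it restricts to a $\sigma_d$-semilinear automorphism of $K_M(u)$; such an automorphism sends the transcendental element $u$ to a generator of $K_M(u)/K_M$, so $u*A=\phi_A(u)$ for a unique $\phi_A\in\PGL_2(K_M)$. Two further observations: $\phi_A$ depends only on the coset $AH$ (because $u*\gamma=u$ for $\gamma\in H$), and $\phi_A=\mathrm{id}$ when $A\in H$; and applying $*A$ to the identity $\pi_G(u)=j$ in $\calF_M$ (legitimate, as $j$ is fixed by all of $\GL_2(\ZZ/M\ZZ)$ under $*$) gives $\pi_G(\phi_A(u))=j$, hence $\pi_G\circ\phi_A=\pi_G$ in $K_M(t)$, so $\phi_A\in A_G$. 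Finally, since $\xi\in A_G$ is equivalent to the polynomial identity $\pi_G\circ\xi=\pi_G$ over $K_M$ with $\pi_G$ having $\Q$-coefficients, the subgroup $A_G\subseteq\PGL_2(K_M)$ is stable under $\Gal(K_M/\Q)$; in particular $\sigma_d(\xi)\in A_G$ whenever $\xi\in A_G$.

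With these facts I would conclude as follows. For $\xi\in A_G$ and $A\in N_{\GL_2(\ZZ/M\ZZ)}(H)$ with $d=\det A$, put $\eta:=\sigma_d(\xi)\circ\phi_A\in A_G$; then $\sigma_d(\xi)(u*A)=\sigma_d(\xi)(\phi_A(u))=\eta(u)\in\Delta$, so the operation is well defined into $\Delta$, and it factors through the class of $A$ in $N/H$ by the corresponding properties of $\phi_A$ and of $\det$. Taking $A\in H$ gives $d=1$ and $\phi_A=\mathrm{id}$, so the trivial coset acts as the identity. For the compatibility axiom the decisive point is the cocycle relation $\phi_{AB}=\sigma_{\det B}(\phi_A)\circ\phi_B$, obtained from $u*(AB)=(u*A)*B=\sigma_{\det B}(\phi_A)(\phi_B(u))$ via the semilinearity of $*B$; combining this with $\sigma_{\det B}(\sigma_{\det A}(\xi))=\sigma_{\det(AB)}(\xi)$ (composition in $(\ZZ/M\ZZ)^\times$) yields
\[
(\xi(u).A).B=\big(\sigma_{\det B}(\eta)\circ\phi_B\big)(u)=\big(\sigma_{\det(AB)}(\xi)\circ\phi_{AB}\big)(u)=\xi(u).(AB),
\]
so the operation is a right action of $N_{\GL_2(\ZZ/M\ZZ)}(H)/H$ on $\Delta$.

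The step I expect to be the main obstacle is the semilinearity bookkeeping: verifying that $u*A$ genuinely equals $\phi_A(u)$ for some $\phi_A\in\PGL_2(K_M)$ (rather than only over a larger cyclotomic field) and that $\phi_A$ depends only on $AH$, and then checking that the $\sigma_{\det}$-twists carried by $\xi$ and by the $\phi$'s assemble so that the right-action identity holds exactly, with no residual Galois twist. Once $K_M(u)=\calF_M^H$ and the cocycle relation for $\phi_A$ are in place, the remaining verifications are formal.
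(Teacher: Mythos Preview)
Your proof is correct and follows essentially the same approach as the paper: both arguments show that $u*A=\phi_A(u)$ for some $\phi_A\in A_G$ (using that $A$ normalizes $H$ so $*A$ preserves $K_M(u)$, and that $\pi_G$ has rational coefficients so $\pi_G\circ\phi_A=\pi_G$), and that $\sigma_d(\xi)\in A_G$, whence the operation lands in $\Delta$ and descends modulo $H$. Your version is more complete in that you explicitly verify the right-action axioms via the cocycle relation $\phi_{AB}=\sigma_{\det B}(\phi_A)\circ\phi_B$, which the paper leaves implicit.
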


\begin{proof}

Let $A \in$ $N_{GL_{2}(\ZZ/M\ZZ)}(H)$. For every $h_1$ in $H$ we have $(u*A)*h_1=u*(Ah_1)=u*(h_2A)=(u*h_2)*A=u*A$ where $h_2 \in H.$ Therefore, $u*A$ lies in $K_M(h).$ Since, $j*A=j$, $(\pi_G(u))*A=\pi_G(u*A)=j$. Therefore, $u*A=\phi(u)$ where $\phi \in \PGL_2(K_M).$ Since, $\pi_G(u)=\pi_G(\phi(u))=j$ we get $\pi_G(\phi(C))=\pi_G(C)$, therefore, $\pi_G(\phi)=\pi_G$. Hence, $\phi(u)$ lies in $\Delta$.

We know $(\xi(u)).A=\sigma_{d}(\xi)(u*A)$. Since, $\pi_G(\sigma_{d}(\xi))=\pi_G$, $(\xi(u)).A$ lies in $\Delta$. 

Since, $H$ acts trivially we get an action of the quotient group.
\end{proof}

Let $\Tilde{G}$ be the stabilizer of $\{u\}$ under this action. Taking inverse image of $\Tilde{G}$ under quotient map (from $N_{GL_{2}(\ZZ/M\ZZ)}(H)$ to $N_{GL_{2}(\ZZ/M\ZZ)}(H)/(H)$) gives us our required subgroup $G$.

\subsection{Alternate method to compute the generators for $G$}

We compute the following set $$S:=\{B \subseteq N_{GL_{2}(\ZZ/M\ZZ)}(H)/(H)~|~\det(B) \simeq (\ZZ/M\ZZ)^{\times}\}.$$ After we compute $S$, we search for the element which fixes $u$, taking its inverse image of under quotient map gives us our required subgroup $G$.  

\subsection{Computing equation for the curve $X_G$ from $G$}

Fix an open subgroup $G$ of $\GL_2(\Zhat)$ such that $\det(G)=\Zhat^{*}$, $-I \in G$ and $X_G$ has genus $0$. Let $N$ be the level of $G$. Fix a set $S$ of generators for $\pi_N(G)$ in $\GL_2(\ZZ/N\ZZ).$ In this section we will explain how to compute an explicit model for $X_G.$

Let $\Gamma$ be the genus $0$ congruence subgroup $G \intersect \SL_2(\ZZ)$ and let $H:=\pi_N(G) \intersect \SL_2(\ZZ/N\ZZ).$ Let $h$ be the normalized hauptmodul of $\Gamma$, see \cref{sec:Haupt} for computation of hauptmoduls. The quotient group $G/H$ is isomorphic to the Galois group $\Gal(K_N/\Q)$ via the determinant function. Using the definition of $H$ we have that $G \subseteq N_{\GL_2(\ZZ/N\ZZ}(H).$ Therefore, $h*g \in K_N(h)$ for every $g \in G$ where $*$ is the right action of $\GL_2(\ZZ/N\ZZ)$ on $\calF_N$ as described in section 2. For $g \in G$, let $d_g=\det(g).$ From the equation $\pi_{\Gamma}(h)=j$ we get that $\sigma_{d_g}(\pi_{\Gamma})(h*g)=j$ for every $g \in G.$ Thus, $h*g=\phi_{\sigma_{d_g}}(h)$ where $\phi_{\sigma_{d_g}} \in \PGL_2(K_N).$ Since $H$ acts trivially on $h$ we get that the quotient group $G/H$ acts on $h$.

\begin{lemma}
Let $\zeta \colon \Gal(K_N/\Q) \to \PGL_2(K_N)$ be the map defined as $\zeta(\sigma_g)=\phi_{\sigma_{d_g}}^{-1}.$ The map $\zeta$ is a cocycle.
\end{lemma}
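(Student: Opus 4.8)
The plan is to check directly that $\zeta$ satisfies the cocycle identity $\zeta(\sigma\tau)=\zeta(\sigma)\cdot\sigma(\zeta(\tau))$ for all $\sigma,\tau\in\Gal(K_N/\Q)$; continuity is automatic since $\Gal(K_N/\Q)$ is finite. The whole computation rests on one bookkeeping fact that I would record first. By \cref{action on F_N}, (\ref{opposite}) each $*g$ is a field automorphism of $\calF_N$ over $\Q(j)$, and writing $g=(\begin{smallmatrix}1&0\\0&d\end{smallmatrix})g'$ with $d=\det g$ and $g'\in\SL_2(\ZZ/N\ZZ)$, the diagonal factor acts on $q$-expansion coefficients by $\sigma_d$ while $g'$ fixes $K_N$ (because $\calF_N^{\SL_2(\ZZ/N\ZZ)/\{\pm I\}}=K_N(j)$ by \cref{action on F_N}, (\ref{algclosure})). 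Hence $*g$ restricts on the constant subfield $K_N\subseteq\calF_N$ to $\sigma_d$, so for any rational function $R\in K_N(t)$ and any $f\in\calF_N$,
\[
R(f)*g \;=\; R^{\sigma_{d}}(f*g),
\]
where $R^{\sigma}$ denotes the rational function obtained by applying $\sigma$ to the coefficients of $R$; in particular, if $R$ is a M\"obius transformation with matrix $B\in\PGL_2(K_N)$, then $R^{\sigma_d}$ has matrix $\sigma_d(B)$.

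Next I would fix $g_1,g_2\in G$, set $\sigma_i:=\sigma_{d_{g_i}}$ (so $h*g_i=\phi_{\sigma_i}(h)$ and $\zeta(\sigma_i)=\phi_{\sigma_i}^{-1}$), and compute $h*(g_1g_2)$ in two ways. Since $\det(g_1g_2)=d_{g_1}d_{g_2}$ we have $\sigma_{d_{g_1g_2}}=\sigma_1\sigma_2$, whence $h*(g_1g_2)=\phi_{\sigma_1\sigma_2}(h)$. On the other hand, using that $*$ is a right action and then the displayed rule with $R=\phi_{\sigma_1}$, $f=h$ and $g=g_2$,
\[
h*(g_1g_2)=(h*g_1)*g_2=\phi_{\sigma_1}(h)*g_2=\phi_{\sigma_1}^{\sigma_2}(h*g_2)=\phi_{\sigma_1}^{\sigma_2}\bigl(\phi_{\sigma_2}(h)\bigr).
\]
As $h$ is a hauptmodul it is transcendental over $K_N$ (indeed $K_N(X_\Gamma)=K_N(h)$), so comparing the two expressions for $h*(g_1g_2)$ forces the equality of M\"obius transformations $\phi_{\sigma_1\sigma_2}=\phi_{\sigma_1}^{\sigma_2}\circ\phi_{\sigma_2}$; that is, in $\PGL_2(K_N)$ one has $\phi_{\sigma_1\sigma_2}=\sigma_2(\phi_{\sigma_1})\cdot\phi_{\sigma_2}$. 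Taking inverses gives
\[
\zeta(\sigma_1\sigma_2)=\phi_{\sigma_1\sigma_2}^{-1}=\phi_{\sigma_2}^{-1}\cdot\sigma_2\bigl(\phi_{\sigma_1}^{-1}\bigr)=\zeta(\sigma_2)\cdot\sigma_2\bigl(\zeta(\sigma_1)\bigr).
\]
Since $\det(G)=(\ZZ/N\ZZ)^{\times}$, the pair $(\sigma_1,\sigma_2)$ ranges over all of $\Gal(K_N/\Q)^2$; relabelling $\sigma:=\sigma_2$, $\tau:=\sigma_1$ and using that $\Gal(K_N/\Q)$ is abelian (so $\sigma_1\sigma_2=\tau\sigma=\sigma\tau$) rewrites the last display as $\zeta(\sigma\tau)=\zeta(\sigma)\cdot\sigma(\zeta(\tau))$, the cocycle property.

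The step I expect to require the most care is the bookkeeping in the first paragraph: one must be sure that $\phi_{\sigma_{d_g}}$ genuinely depends only on $\sigma_{d_g}$, not on $g$ — this holds because $H$ acts trivially on $h$ and $G/H\isom\Gal(K_N/\Q)$ via $\det$, so any two elements of $G$ with equal determinant act identically on $h$ — and that the interplay between the Galois action on coefficients and the right action $*$ is precisely the displayed identity, which reduces to the single observation that $*g$ is a ring homomorphism on $\calF_N$ whose restriction to $K_N$ is $\sigma_{\det g}$. Everything downstream is formal manipulation in $\PGL_2(K_N)$.
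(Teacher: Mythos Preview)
Your argument is correct and follows essentially the same route as the paper: use that $*$ is a right action to equate $(h*g_1)*g_2$ with $h*(g_1g_2)$, read off $\phi_{\sigma_1\sigma_2}=\sigma_2(\phi_{\sigma_1})\circ\phi_{\sigma_2}$, invert, and finish with the abelianness of $\Gal(K_N/\Q)$. The paper's proof is a terse two-line version of exactly this computation; your additional care in justifying that $*g$ restricts to $\sigma_{\det g}$ on $K_N$ and that $\phi_{\sigma_{d_g}}$ depends only on $\det g$ fills in details the paper leaves implicit.
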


\begin{proof}
We know that $*$ is a right action. Therefore, $(h*g_1)*g_2=h*(g_1g_2).$ This gives us $\sigma_{d_{g_2}}(\phi_{\sigma_{d_{g_1}}})\phi_{\sigma_{d_{g_2}}}(h)=\phi_{\sigma_{d_{g_1g_2}}}(h).$ Thus, $\zeta(\sigma_{g_2}\sigma_{g_1})=\zeta(\sigma_{g_1}\sigma_{g_2})=\phi_{\sigma_{d_{g_1g_2}}}^{-1}=\phi_{\sigma_{d_{g_2}}}^{-1}\sigma_{d_{g_2}}(\phi_{\sigma_{d_{g_1}}}^{-1})=\zeta(\sigma_{g_2})\sigma_{d_{g_2}}(\zeta(\sigma_{g_1})).$
\end{proof}

Using the procedure described in \cref{sec:Twists} we can now get an equation for the modular curve $X_G$ as a conic in $\PP^2.$ 

\appendix
\section{How to read the tables}\label{sec: Tables}

In tables \ref{tab: Table 1A-1A-6C-6A} to \ref{tab: Table 30A-120D-48A-48H} we use a label of the form $NA_1-MA_2$ to denote the groups $G_i$ as in \cref{THM:FIRST}. Here $N$, $M$ are natural numbers and $A_1$, $A_2$ are letters which need not be equal. The natural number $M$ is the level of $G$. The part $NA$ denotes the label of congruence subgroup $\Gamma$ equal to $G_i \intersect \SL_2(\Zhat)$ from \cite{MR2016709}. The label 1A-1A denotes the $j$-line. The function given in the column "$\pi(t)$" describes the morphism $X_G \to X_{G^{'}}$ where $G'$ is a supergroup containing $G$ whose label is given in the column "sup". Thus, to obtain the map $\pi_G \colon X_G \to \PP^1_\Q$ we can compose the functions $X_G \to X_{G^{'}} \to \PP^1_Q.$ In few cases we describe $\pi(t)$ by a sequence of the form $[f_1,f_2,\ldots,f_r]$ where each $f_i \in \Q(t)$ and the corresponding $\pi(t)$ is $f_r(\ldots(f_2(f_1(t)))).$

To determine the family described by $(X_G,\pi_G)$ we use tables \ref{tab: Table fam 2A-2A-6E-24B} to \ref{tab: Table fam 28A-56D-48A-48H}. We can find a matrix $C$ (easily computable from MAGMA) such that a maximal abelian subgroup of $\Aut(\PP^1_{\Q},\pi(C(t)))$ is equal to  $\calA$ of the case mentioned in column "Case." The column "Case" refers to the case number in \cref{sec:families of twists}.

Let us go through an example. 

\begin{example}\label{cubic family}

Define \[\pi(t):= \frac{(1728t^6 - 5184t^5 + 10368t^4 - 12096t^3 + 10368t^2 - 5184t + 1728)}{(t^6 -3t^5 - 3t^4/4 + 13t^3/2 - 3t^2/4 - 3t + 1)}.\] The pair $(\PP^1_{\Q},\pi)$ is labeled $2C-2A$ in \cref{tab: Table 1A-1A-6C-6A}. 

Let $\calA_1 =\{(\begin{smallmatrix}
1 & 0 \\ 
0  & 1
\end{smallmatrix}),(\begin{smallmatrix}
-1 & 0 \\ 
0  & 1
\end{smallmatrix})\}\isom \ZZ/2\ZZ$; it is $\calA$ given in Case 1 of \cref{sec:families of twists} and $\calA_2 =\{(\begin{smallmatrix}
1 & 0 \\ 
0  & 1
\end{smallmatrix}),(\begin{smallmatrix}
0 & -1 \\ 
1  & -1
\end{smallmatrix}),(\begin{smallmatrix}
1 & -1 \\ 
1  & 0
\end{smallmatrix})\}\isom \ZZ/3\ZZ$; it is $\calA$ given in Case 3 of \cref{sec:families of twists}.

Define the rational functions
\[J_1(t):=\frac{(t^3 + 576t^2 + 110592t + 7077888)}{(t^2 - 128t + 4096)}\] and
\[J_2(t):=\frac{(1728t^2 - 5184t + 15552)}{(t^2 - 3t + 9/4)}.\]

Then, any genus $0$ modular curve $(X_G,\pi_G)$ such that $X_G$ has a rational point and is a $\Qab$-twist of $(\PP^1_{\Q},\pi)$ is isomorphic to $(\PP^1_{\Q},J_i \circ \pi_{\calA_i,v})$ for some $i \in \{1,2\}.$
 
\end{example}
\begin{remark}
\begin{enumerate}
    \item The tables are automated. The maps can be chosen to make the columns look nicer. We can also include more information here if needed.

    \item We illustrate the similarity of our labels with those given in \cite{MR3671434} with an example. The pair $(X_{G^t},\pi)$ with label $4E-4A$ in table \ref{tab: Table 1A-1A-6C-6A} is exactly isomorphic to one of the modular curves with lables $4E^0-4a$, $4E^0-4b$ and $4E^0-4c$ in \cite{MR3671434}, i.e., our labels match with labels in \cite{MR3671434} up to first three symbols. The generators for groups $G$ can be accessed in the file "Tables 1-9.m" of github repository {\url{https://github.com/Rakvi6893/Classification-of-genus-0-Modular-Curves-that-have-a-Rational-Point}}.
    
\end{enumerate}

\end{remark}

\newpage

\begin{table}[H] 
\begin{small}
 
\vspace{2 cm }
\caption{Table of families for groups with label $28A-56D$ to $48A-48H$}

\label {tab: Table fam 28A-56D-48A-48H}
\end{small}
\end{table}

\begin{bibdiv} 
\begin{biblist}

\bib{MR2681719}{article}{
   author={Beauville, Arnaud},
   title={Finite subgroups of ${\rm PGL}_2(K)$},
   conference={
      title={Vector bundles and complex geometry},
   },
   book={
      series={Contemp. Math.},
      volume={522},
      publisher={Amer. Math. Soc., Providence, RI},
   },
   date={2010},
   pages={23--29},
   review={\MR{2681719}},
   doi={10.1090/conm/522/10289},
}

\bib{MR1484478}{article}{
    author = {Bosma, Wieb and Cannon, John and Playoust, Catherine},
     title = {The {M}agma algebra system. {I}. {T}he user language},
      note = {Computational algebra and number theory (London, 1993)},
   journal = {J. Symbolic Comput.},
  fjournal = {Journal of Symbolic Computation},
    volume = {24},
      year = {1997},
    number = {3-4},
     pages = {235--265},
      issn = {0747-7171},
   mrclass = {68Q40},
  mrnumber = {MR1484478},
     review={\MR{1484478}},
       doi = {10.1006/jsco.1996.0125},
       URL = {http://dx.doi.org/10.1006/jsco.1996.0125},
}

\bib{MR1994218}{book}{
   author={Bourbaki, Nicolas},
   title={Algebra II. Chapters 4--7},
   series={Elements of Mathematics (Berlin)},
   note={Translated from the 1981 French edition by P. M. Cohn and J. Howie;
   Reprint of the 1990 English edition [Springer, Berlin;  MR1080964
   (91h:00003)]},
   publisher={Springer-Verlag, Berlin},
   date={2003},
   pages={viii+461},
   isbn={3-540-00706-7},
   review={\MR{1994218}},
   doi={10.1007/978-3-642-61698-3},
}

\bibitem[CLY04]{Rademacher}
Chua, Kok Seng and Lang, Mong Lung and Yang, Yifan,
  \newblock \textit{On Rademacher's conjecture: congruence subgroups of genus zero of the modular group},
 \newblock Journal of Algebra,
 \newblock 277,
 \newblock 1,
 \newblock 408--428,
 \newblock 2004,
 \newblock Elsevier

\bib{MR2016709}{article}{
   author={Cummins, C. J.},
   author={Pauli, S.},
   title={Congruence subgroups of ${\rm PSL}(2,{\ZZ})$ of genus less than
   or equal to 24},
   journal={Experiment. Math.},
   volume={12},
   date={2003},
   number={2},
   pages={243--255},
   issn={1058-6458},
   review={\MR{2016709}},
}
available at \href{http://www.uncg.edu/mat/faculty/pauli/congruence/}{http://www.uncg.edu/mat/faculty/pauli/congruence/}

\bib{MR0337993}{article}{
   author={Deligne, P.},
   author={Rapoport, M.},
   title={Les sch\'{e}mas de modules de courbes elliptiques},
   language={French},
   conference={
      title={Modular functions of one variable, II},
      address={Proc. Internat. Summer School, Univ. Antwerp, Antwerp},
      date={1972},
   },
   book={
      publisher={Springer, Berlin},
   },
   date={1973},
   pages={143--316. Lecture Notes in Math., Vol. 349},
   review={\MR{0337993}},
}
		
\bib{MR648603}{book}{
   author={Kubert, Daniel S.},
   author={Lang, Serge},
   title={Modular units},
   series={Grundlehren der Mathematischen Wissenschaften [Fundamental
   Principles of Mathematical Science]},
   volume={244},
   publisher={Springer-Verlag, New York-Berlin},
   date={1981},
   pages={xiii+358},
   isbn={0-387-90517-0},
   review={\MR{648603}},
}

\bib{MR3906177}{article}{
   author={Lombardo, Davide},
   author={Lorenzo Garc\'{\i}a, Elisa},
   title={Computing twists of hyperelliptic curves},
   journal={J. Algebra},
   volume={519},
   date={2019},
   pages={474--490},
   issn={0021-8693},
   review={\MR{3906177}},
   doi={10.1016/j.jalgebra.2018.08.035},
}

\bib{MR0450283}{article}{
   author={Mazur, B.},
   title={Rational points on modular curves},
   conference={
      title={Modular functions of one variable, V},
      address={Proc. Second Internat. Conf., Univ. Bonn, Bonn},
      date={1976},
   },
   book={
      publisher={Springer, Berlin},
   },
   date={1977},
   pages={107--148. Lecture Notes in Math., Vol. 601},
   review={\MR{0450283}},
}
\bib{MR3957898}{article}{
   author={Morrow, Jackson S.},
   title={Composite images of Galois for elliptic curves over $\bold{Q}$ and
   entanglement fields},
   journal={Math. Comp.},
   volume={88},
   date={2019},
   number={319},
   pages={2389--2421},
   issn={0025-5718},
   review={\MR{3957898}},
   doi={10.1090/mcom/3426},
}
\bib{MR3500996}{article}{
    AUTHOR = {Rouse, Jeremy and Zureick-Brown, David},
     TITLE = {Elliptic curves over {$\Bbb Q$} and 2-adic images of {G}alois},
   JOURNAL = {Res. Number Theory},
  FJOURNAL = {Research in Number Theory},
    VOLUME = {1},
      YEAR = {2015},
     PAGES = {Paper No. 12, 34},
      ISSN = {2522-0160},
   MRCLASS = {11G05 (11F80)},
  MRNUMBER = {3500996},
MRREVIEWER = {\'{A}lvaro Lozano-Robledo},
       DOI = {10.1007/s40993-015-0013-7},
       URL = {https://doi.org/10.1007/s40993-015-0013-7},
}		
\bib{MR0387283}{article}{
   author={Serre, Jean-Pierre},
   title={Propri\'{e}t\'{e}s galoisiennes des points d'ordre fini des courbes
   elliptiques},
   language={French},
   journal={Invent. Math.},
   volume={15},
   date={1972},
   number={4},
   pages={259--331},
   issn={0020-9910},
   review={\MR{0387283}},
   doi={10.1007/BF01405086},
}

\bib{MR1466966}{book}{
   author={Serre, Jean-Pierre},
   title={Galois Cohomology},
   note={Translated from the French by Patrick Ion and revised by the
   author},
   publisher={Springer-Verlag, Berlin},
   date={1997},
   pages={x+210},
   isbn={3-540-61990-9},
   review={\MR{1466966}},
   doi={10.1007/978-3-642-59141-9},
}

\bib{MR1291394}{book}{
   author={Shimura, Goro},
   title={Introduction to the arithmetic theory of automorphic functions},
   series={Publications of the Mathematical Society of Japan},
   volume={11},
   note={Reprint of the 1971 original;
   Kan\^{o} Memorial Lectures, 1},
   publisher={Princeton University Press, Princeton, NJ},
   date={1994},
   pages={xiv+271},
   isbn={0-691-08092-5},
   review={\MR{1291394}},
}

\bib{MR3671434}{article}{
   author={Sutherland, Andrew V.},
   author={Zywina, David},
   title={Modular curves of prime-power level with infinitely many rational
   points},
   journal={Algebra Number Theory},
   volume={11},
   date={2017},
   number={5},
   pages={1199--1229},
   issn={1937-0652},
   review={\MR{3671434}},
   doi={10.2140/ant.2017.11.1199},}

\bib{MR2721742}{article}{
   author={Zywina, David},
   title={Elliptic curves with maximal Galois action on their torsion
   points},
   journal={Bull. Lond. Math. Soc.},
   volume={42},
   date={2010},
   number={5},
   pages={811--826},
   issn={0024-6093},
   review={\MR{2721742}},
   doi={10.1112/blms/bdq039},
}
\bibitem[Zyw15]{1508.07660}
David Zywina,
\newblock \textit{On the possible images of the mod $l$ representations associated to elliptic curves over $\Q$}, 2015;
\newblock arXiv:1508.07660.

\end{biblist}
\end{bibdiv}

\end{document}